\documentclass[letterpaper, 11pt]{article}

\usepackage[margin=1in]{geometry}
\usepackage[bookmarks, colorlinks=true, plainpages = false, colorlinks=true,
            citecolor=red,
            linkcolor=blue,
            anchorcolor=red,
            urlcolor=blue]{hyperref}
\usepackage{url}
\usepackage{amssymb, amsmath, mathtools, amsfonts, amsthm,  dsfont} 
\usepackage{float, xcolor, xspace, graphicx, subfigure, appendix, enumitem, cleveref, booktabs, multirow, comment} 
\usepackage{algorithm, algpseudocode} 
\usepackage[normalem]{ulem}

\makeatletter
\algrenewcommand\alglinenumber[1]{\footnotesize #1. }  
\algrenewcommand\algorithmiccomment[1]{\hfill\textcolor{blue}{\(\triangleright\) #1}} 
\makeatother

\usepackage{prettyref}

\newrefformat{eq}{(\ref{#1})}
\newrefformat{sec}{Section~\ref{#1}}
\newrefformat{alg}{Algorithm~\ref{#1}}
\newrefformat{fig}{Figure~\ref{#1}}
\newrefformat{tab}{Table~\ref{#1}}
\newrefformat{rmk}{Remark~\ref{#1}}
\newrefformat{clm}{Claim~\ref{#1}}
\newrefformat{claim}{Claim~\ref{#1}}
\newrefformat{def}{Definition~\ref{#1}}
\newrefformat{assump}{Assumption~\ref{#1}}
\newrefformat{cor}{Corollary~\ref{#1}}
\newrefformat{lmm}{Lemma~\ref{#1}}
\newrefformat{prop}{Proposition~\ref{#1}}
\newrefformat{app}{Appendix~\ref{#1}}
\newrefformat{thm}{Theorem~\ref{#1}}

\usepackage[size=tiny]{todonotes}

\theoremstyle{plain}
\newtheorem{theorem}{Theorem}
\newtheorem{lemma}{Lemma}
\newtheorem{proposition}{Proposition}
\newtheorem{corollary}{Corollary}

\theoremstyle{definition}
\newtheorem{definition}{Definition}

\newtheorem{remark}{Remark}
\newtheorem{claim}{Claim}

\newif\ifhideproofs
\ifhideproofs
    \usepackage{environ}
    \NewEnviron{hide}{}

\fi
\newif\ifdraft
\drafttrue
\newenvironment{proofsketch}{%
  \noindent\textit{Proof Sketch.}}{\hfill$\square$ }

\newcommand{\stepa}[1]{\overset{\rm (a)}{#1}}
\newcommand{\stepb}[1]{\overset{\rm (b)}{#1}}
\newcommand{\stepc}[1]{\overset{\rm (c)}{#1}}
\newcommand{\stepd}[1]{\overset{\rm (d)}{#1}}

\newcommand{\diff}{{\rm d}}

\newcommand{\Pois}{{\rm Pois}}

\newcommand{\E}{\mathbb{E}}

\makeatletter

\@tfor\@tempa:=ABCDEFGHIJKLMNOPQRSTUVWXYZ\do{
  \expandafter\edef\csname t\@tempa\endcsname{
    {\noexpand\widetilde{\@tempa}}
  }
}

\@tfor\@tempa:=ABCDEFGHIJKLMNOPQRSTUVWXYZ\do{
  \expandafter\edef\csname cal\@tempa\endcsname{
    \noexpand\mathcal{\@tempa}
  }
}

\@tfor\@tempa:=abcdefghijklmnopqrstuvwxyz\do{
  \expandafter\edef\csname sf\@tempa\endcsname{
    \noexpand\mathsf{\@tempa}
  }
}
\makeatother

\newcommand{\bfp}{\pL}
\newcommand{\bfq}{\pR}
\newcommand{\bfC}{\mathbf{C}}

\newcommand{\psihat}{\widehat{\psi}}

\renewcommand{\P}{\mathbb{P}}

\newcommand{\bL}{b^{\mathrm{L}}}
\newcommand{\bR}{b^{\mathrm{R}}}

\newcommand{\pL}{\mathbf{p}}
\newcommand{\pR}{\mathbf{q}}

\newcommand{\aL}{a^{\mathrm{L}}}
\newcommand{\aR}{a^{\mathrm{R}}}

\newcommand{\piL}{\pi^{\mathrm{L}}}
\newcommand{\piR}{\pi^{\mathrm{R}}}

\newcommand{\pihatL}{\widehat{\pi}^{\mathrm{L}}}
\newcommand{\pihatR}{\widehat{\pi}^{\mathrm{R}}}

\newcommand{\OS}{\mathrm{OS}}
\newcommand{\TS}{\mathrm{TS}}

\newcommand{\match}{\eta}
\newcommand{\matchnum}{M_{\star}}

\newcommand{\Adv}{\mathsf{Adv}_{\OS}}

\newcommand{\tmax}{t^\dagger}
\newcommand{\smax}{s^\dagger}

\title{Flexibility allocation in random bipartite matching markets: exact matching rates and dominance regimes}
\date{}

\author{
    Taha Ameen\thanks{
        T. Ameen is with the Department of Electrical and Computer Engineering and the Coordinated Science Lab, University of Illinois, Urbana IL, USA, \texttt{tahaa3@illinois.edu}. T. Ameen is supported by NSF Grant CCF 19-00636.
    },~
    Flore Sentenac\thanks{
        F. Sentenac is with the Department of Information Systems and Operations Management, HEC Paris Business School, France, \texttt{sentenac@hec.fr}.
    } ~and 
    Sophie H.\ Yu\thanks{
        S.\ H.\ Yu is with the Operations, Information and Decisions Department,   the Wharton School of Business, University of Pennsylvania, Philadelphia PA, USA,  \texttt{hysophie@wharton.upenn.edu}.
    }
}
\begin{document}
\maketitle

\begin{abstract}
This paper studies how a fixed flexibility budget should be allocated  across the two sides of a balanced bipartite matching market. We model compatibilities via a sparse bipartite stochastic block model in which flexible agents are more likely to connect with agents on the opposite side, and derive an exact variational formula for the asymptotic matching rate under any flexibility allocation. The derivation extends  the local weak convergence framework of~\cite{bordenave2013matchings} from single-type to multi-type unimodular Galton--Watson trees, reducing the matching rate to an explicit low-dimensional optimization problem. Using this formula, we analytically investigate when the one-sided allocation, which concentrates all flexibility on one side, dominates the two-sided allocation and vice versa, sharpening and extending the comparisons of~\cite{freund2024twov3} which relied on approximate algorithmic bounds rather than an exact characterization of the matching rate. 
\end{abstract}

\tableofcontents

\section{Introduction}
 
In many applications, flexibility can be introduced on either side of a market: supply agents may be trained to handle a broader range of tasks, demand agents may be made more accommodating, or both. This raises a natural and practically important design question: \emph{for a fixed flexibility budget, is it better to concentrate flexibility on one side of the market, or to spread it across both sides?}
 
Inspired by~\cite{freund2024twov3}, we study this question through a bipartite compatibility graph $G_n$ with $n$ supply nodes and $n$ demand nodes. On each side, agents independently belong to one of two types: \emph{flexible} (type~$2$) or \emph{regular} (type~$1$). Supply nodes are each independently flexible with probability $\bL$ and demand nodes with probability $\bR$, defining the flexibility allocation $(\bL, \bR) \in [0,1]^2$.
 
Conditional on types, edges in $G_n$ form independently: a supply node of type $x$ and a demand node of type $y$ are connected with probability $c_{xy}/n$. The connection rates are collected in the matrix $\mathbf{C} \triangleq [c_{xy}]$ for $x, y \in \{1,2\}$. In our 
flexibility design problem, $\mathbf{C}$ is parametrized by a 
baseline rate $\alpha\geq 0$ and a flexibility premium 
$\alpha_f \geq \alpha$:
\[
    \mathbf{C}
    =
    \begin{bmatrix}
        2\alpha & \alpha+\alpha_f\\
        \alpha+\alpha_f & 2\alpha_f
    \end{bmatrix},
\]
so that a regular--regular pair connects at rate $2\alpha$, a 
mixed pair at rate $\alpha+\alpha_f$, and a flexible--flexible 
pair at rate $2\alpha_f$. 
A key feature of this parametrization is that the expected number of compatibility edges depends only on the total flexibility budget
\[
    B \triangleq \bL + \bR,
\]
and not on how it is split between the two sides. This makes $B$ a natural measure of the platform's investment in flexibility, holding constant the overall graph density while varying its allocation.
 
The platform's goal is to maximize the number of successful matches. Let $M_*(G_n)$ denote the size of a maximum matching in $G_n$. Our primary object of interest is the asymptotic matching rate
\[
    \eta(\bL, \bR; \mathbf{C})
    \triangleq
    \lim_{n\to\infty} \frac{\mathbb{E}[M_*(G_n)]}{n},
\]
where the dependence on $(\alpha, \alpha_f)$ is captured through $\mathbf{C}$.

For a fixed flexibility budget $B \in (0,1]$, two canonical allocations play a central role: the \emph{one-sided allocation} $(\bL,\bR) = (B, 0)$, which places all flexibility on the supply side, and the \emph{two-sided allocation} $(\bL,\bR) = (B/2, B/2)$, which distributes it equally across both sides.

The comparison between these two allocations is governed by two competing effects. \emph{Flexibility cannibalization} favors the one-sided allocation: when flexibility is present on both sides, flexible agents match disproportionately with each other, wasting compatibility that could have benefited regular agents. \emph{Flexibility asymmetry} favors the two-sided allocation: concentrating all flexibility on one side leaves regular agents on that side at a disadvantage, competing against flexible peers without receiving any flexibility benefit themselves. \cite{freund2024twov3} identify these two effects and show that neither allocation uniformly dominates the other: depending on $B$, $\alpha$, and $\alpha_f$, either can yield the larger matching size. Depending on the parameter regime, they rely on coupling arguments, Karp--Sipser analysis, or computer-aided proofs to establish which effect prevails. While these varied algorithmic and computational tools provide valuable bounds, they do not yield a unified, exact matching rate.

\paragraph{Contributions.} In this paper, we overcome these limitations by moving from algorithmic bounds to an exact asymptotic characterization. Our first contribution is an explicit variational formula for the asymptotic matching rate $\match(\bL,\bR;\bfC)$. Specifically, we show that
\[
    \match(\bL,\bR;\bfC) = 1 - \max_{t_1,\, t_2 \,\in\, [0,1]} F(t_1, t_2),
\]
where $F$ is a closed-form function of the model parameters (see~\eqref{eq:F-2x2-general}). This formula yields exact variational characterizations for both the one-sided matching rate $\match_{\OS}(B,\alpha,\alpha_f) \triangleq \match(B, 0;\bfC)$ and the two-sided matching rate $\match_{\TS}(B,\alpha,\alpha_f) \triangleq \match(B/2, B/2;\bfC)$, enabling direct comparison at the level of the asymptotic maximum matching size. Our approach extends the local weak convergence framework of~\cite{bordenave2013matchings} from single-type to multi-type trees, since the compatibility graph sequence $(G_n)$ converges locally weakly in probability to a $2$-type Poisson Galton--Watson tree encoding the flexibility levels of the nodes.

Our second contribution uses these variational formulas to map the dominance landscape between one-sided and two-sided allocations, significantly broadening the findings of~\cite{freund2024twov3}. For one-sided dominance, we prove that the one-sided allocation is optimal for the full budget $B = 1$ across the entire $(\alpha, \alpha_f)$ space, which significantly relaxes the conditions of prior work. 
For the case of no baseline connectivity ($\alpha = 0$), we show that the one-sided allocation continues to dominate for all sufficiently large $\alpha_f$.
Conversely, we establish two-sided dominance for all $\alpha > 0$ whenever $B \in (B^*,1)$ where $B^{\star} \triangleq 2 - 2e/3 \approx 0.188$ and $\alpha_f$ is sufficiently large. Furthermore, for any $B \in (0,1)$, we identify two-sided dominance regimes at both sufficiently small and large $\alpha$, with a small-$\alpha$ threshold that, unlike previous bounds, does not vanish as $B \to 0$. By reducing the dominance comparison to an explicit low-dimensional optimization problem, our approach yields unified proofs across the full parameter space, avoiding the regime-specific analyses required by
algorithmic bounds. We refer the reader to Remarks~\ref{rmk:full_budget}
and~\ref{rmk:Thm3} for a detailed comparison with the results
of~\cite{freund2024twov3}.
Simulations in~\Cref{sec:simulation} complement the theoretical findings. Mapping the full dominance boundary across parameter regimes, they demonstrate a monotonic shift with $B$: the two-sided allocation dominates almost everywhere at low budgets, whereas one-sided dominance progressively takes over until it covers the entire space at $B = 1$.

\paragraph{Organization.} The rest of the paper is organized as follows. Our main results on the matching rate formula and dominance regimes are stated in~\Cref{sec:matching-rate-formula} and~\Cref{sec:dominance-theorems} respectively, and related literature is reviewed in~\Cref{sec:related}. \Cref{sec:matching} develops the local weak convergence analysis and proves the variational formula for the matching rate. \Cref{sec:dominance} uses this formula to establish various theorems on one-sided versus two-sided dominance. Finally,~\Cref{sec:simulation} validates these results via simulation. A notation table and some supplementary proofs and discussions are provided in the appendices.

\subsection{Main Results} \label{sec:main-results}
 
\subsubsection{Matching rate formula} \label{sec:matching-rate-formula}
 
Our first contribution is a precise characterization of the asymptotic matching rate for the bipartite random graph model. The variational formula holds for any $2 \times 2$ connection matrix $\mathbf{C}$, and is not restricted to the specific flexibility parametrization introduced above.
 
\begin{theorem}\label{thm:matching-rate}
    Let $\mathbf{p} \triangleq (1-\bL, \bL)$,
    $\mathbf{q} \triangleq (1-\bR, \bR)$, and $\mathbf{C} = [c_{xy}]$. 
    As $n\to \infty$, the fraction of matched nodes in $G_n$ converges in probability as
    \begin{align}\label{eq:1-F}
        \frac{\matchnum(G_n)}{n}
        \, \xrightarrow[n\rightarrow \infty]{~\mathrm{p.}~} \, 
        1 - \max_{t_1,\, t_2 \,\in\, [0,1]} F(t_1, t_2),
    \end{align}
    where, with $M_y \triangleq \sum_{x=1}^2 c_{xy}\, p_x$,
    \begin{align}\label{eq:F-2x2-general}
        F(t_1, t_2)
        \triangleq
        \sum_{x=1}^{2} p_x
            \exp  \bigg({-\sum_{y=1}^{2} c_{xy}\, q_y\, e^{-M_y t_y}} \bigg)
        + \sum_{y=1}^{2} q_y\, e^{-M_y t_y}(1 + M_y t_y)
        - 1.
    \end{align}
\end{theorem}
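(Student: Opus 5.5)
We follow the strategy of \cite{bordenave2013matchings} in three steps: local weak convergence, a matching-size formula on the limiting tree, and an evaluation of that formula.

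\textbf{Step 1: local limit.} Rooting $G_n$ at a uniformly random vertex, the sequence converges locally weakly in probability to a two-sided, two-type Poisson Galton--Watson tree $\mathcal{T}$. A left (supply) vertex of type $x$ has $\mathrm{Pois}(c_{xy}q_y)$ right children of type $y$, independently over $y$. A right vertex of type $y$ has $\mathrm{Pois}(c_{xy}p_x)$ left children of type $x$. The root is a left vertex with probability $1/2$ and a right vertex with probability $1/2$, with its type drawn from $\mathbf{p}$ or $\mathbf{q}$ accordingly. This is standard: compute first and second moments of the counts of rooted neighbourhoods of depth $r$, using that short cycles are rare in the sparse regime. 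In particular, a right vertex has in total $\mathrm{Pois}(M_y)$ left children.

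\textbf{Step 2: matching number on the tree.} The approach of \cite{bordenave2013matchings} extends to multi-type unimodular trees. It gives $M_\star(G_n)/(2n)\to$ the limiting matched fraction, which equals $1-\tfrac12\big(\P(\text{left root unmatched})+\P(\text{right root unmatched})\big)$ under the extremal solution of the recursive distributional equation for the messages.

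The messages are defined as follows. For an oriented edge, let $h$ be the indicator that the child vertex is \emph{essential} (``covered'') in its subtree, via the $\{0,1\}$ recursion $h_v=1-\max_{w\text{ child}}h_w$. On a Poisson tree, only the probability that $h=0$ matters, separately for each type and side. For a right vertex of type $y$, write $\P(\text{some left child has } h=1)=1-e^{-M_y t_y}$, so $t_y$ is the probability that a left vertex's message equals $1$. For a left vertex of type $x$, this message equals $1$ iff all its right children carry $0$, which happens with probability $\exp(-\sum_y c_{xy}q_y e^{-M_y t_y})$. Hence
\[
t = \phi(t), \qquad \phi\colon (t_1,t_2)\mapsto \big(f_1(t),f_2(t)\big).
\]
Unlike the single-type case, the left messages depend on $x$. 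However, the right-type probabilities depend on $t$ only through the $M_y$-weighted averages, so a two-dimensional reduction in $t_y$ is possible.

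As in \cite{bordenave2013matchings}, we would take the largest and smallest fixed points of the two-step map. Following Karp--Sipser-style bookkeeping, the unmatched fraction equals
\[
\sum_x p_x\,\P(\text{all children }0)+\sum_y q_y\,\P(\text{at most one relevant child})-(\text{correction}),
\]
and this should equal $F$ evaluated at the fixed point with largest value. The expression $e^{-M_yt_y}(1+M_yt_y)$ is precisely $\P(\mathrm{Pois}(M_yt_y)\le1)$, matching this interpretation.

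\textbf{Step 3: variational form.} We show that $\max_{[0,1]^2}F$ is attained at the relevant fixed point, by computing
\[
\partial_{t_y}F = q_yM_y^2t_ye^{-M_yt_y}\,\big(g_y(t)-t_y\big),
\]
where $g_y$ is the iterate map. This identity is verified by differentiating and using the definition of $M_y$. Consequently, stationary points of $F$ coincide with fixed points. Comparing $F$ across fixed points, and using monotonicity of the iteration, we identify the maximizer, following the one-type argument where $\max_t$ selects the correct one of the $t_*\le t^*$ pair.

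\textbf{Main obstacle.} The hardest part is the rigorous transfer in Step 2. This means establishing the multi-type analogue of the Bordenave--Lelarge--Salez continuity of the matching number under local weak limits. It also means showing that the correct solution of the RDE (which may be non-unique when fixed points proliferate) yields exactly the matching number, including the monotone coupling between types in the two-type setting. We would first try to reproduce their proof via the monotone messages $h^{(2k)}$ and $h^{(2k+1)}$ converging to extremal fixed points, noting that monotonicity is preserved because $\phi$ is coordinatewise monotone.
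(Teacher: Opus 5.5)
\textbf{There is a genuine gap in Step~2, where the theorem's content lies.} Your skeleton is right: the local limit, your fixed-point map (which is exactly the paper's $H_y(t)=\sum_x \aR_{yx}\exp(-\sum_{y'}c_{xy'}q_{y'}e^{-M_{y'}t_{y'}})$), and first-order conditions. But Step~2 asserts that the unmatched fraction, after a ``correction'', ``should equal'' $F$ at a fixed point; it does not derive this. The messages you use, $\{0,1\}$-valued $h_v=1-\max_w h_w$ with even/odd iterates $h^{(2k)},h^{(2k+1)}$, cannot deliver it. On an infinite tree that recursion has no base case. When $H$ has several fixed points (the analogue of $c>e$ for Erd\H{o}s--R\'enyi), the even and odd iterates converge to different limits. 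Karp--Sipser/Gallai--Edmonds bookkeeping at a single $\{0,1\}$ fixed point is then unjustified; this is the supercritical regime where Karp--Sipser leaves an uncharacterized residual.

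The paper instead uses the $[0,1]$-valued recursion $\Theta$ of \cite{bordenave2013matchings} (Definition~\ref{def:Theta}). For bounded degrees, the matching fraction is $1-\sum_x p_x\E[\Theta_{x;\boldsymbol{\piL},\boldsymbol{\pihatR}}(\boldsymbol\mu^\star)]$ with $\boldsymbol\mu^\star$ the \emph{largest} fixed point (Proposition~\ref{prop:BLS-11}). The idea you are missing is Lemma~\ref{lmm:root-rep-formula}: for \emph{any} fixed point of $\Theta$, this root functional depends only on the positivity vector and equals $F(\hat t_1,\hat t_2)$. Its proof writes $\E[(1+Z_x)^{-1}]$ as $\P(Z_x<\infty)=\phi_x(r_x)$ minus a sum. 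By exchangeability and the unimodularity constraint~\eqref{eq:unimodularity-constraint}, that sum reduces to $\psi_y'(1)\,\E[X/(X+S^y)\mathbf 1_{\{S^y>0\}}]=\psi_y'(1)\,\hat t_y\,\E[(1+N^\star)^{-1}\mathbf 1_{\{N^\star>0\}}]=1-\psi_y(1-\hat t_y)-\hat t_y\psi_y'(1-\hat t_y)$ (Lemma~\ref{clm:formula-for-Ky}). This is where your $\P(\mathrm{Pois}(M_yt_y)\le 1)$ term actually comes from: for Poisson laws the right-hand side is $\P(\mathrm{Pois}(M_y\hat t_y)\ge 2)$.

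\textbf{Step~3 then does not identify the right fixed point.} ``Comparing $F$ across fixed points'' presupposes you know which fixed point the matching problem selects. In one-type Erd\H{o}s--R\'enyi this is hidden, because the two local maximizers $t_*,t^*$ are exchanged by $x\mapsto e^{-cx}$ and give equal values of $F$. No such symmetry exists in the two-type bipartite model, where the fixed points of the monotone map $H$ need not even be totally ordered. The paper avoids enumerating fixed points:
\begin{enumerate}
\item A maximizer $t^\dagger$ of $F$ is a fixed point of $H$ (Lemma~\ref{clm:max}).
\item Starting $\Theta$ from $\mathrm{Bernoulli}(\smax_x)$ with $\smax_x=\widehat\phi_x(r_x(t^\dagger))$ gives stochastically decreasing iterates whose positivity vector stays equal to $t^\dagger$ (Lemma~\ref{clm:const}).
\item The comparison inequality~\eqref{eq:comparison-ineq}, $\mathcal R(\boldsymbol\nu)\ge F(t(\boldsymbol\nu))$ whenever $\Theta(\boldsymbol\nu)\preceq\boldsymbol\nu$, passes to the limiting fixed point $\boldsymbol\mu^{(\infty)}$.
\item Monotonicity then gives $F(t^\star)=\mathcal R(\boldsymbol\mu^\star)\ge\mathcal R(\boldsymbol\mu^{(\infty)})\ge\max F$.
\end{enumerate}

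\textbf{You also omit the truncation.} The RDE characterization holds only for bounded degrees. One must isolate vertices of degree $>d$ and identify the limit as a two-type unimodular tree with non-Poisson laws $\phi_x^{(d)},\psi_y^{(d)}$ and modified mixing $a^{\mathrm L,d}_{xy}$. This is why Theorem~\ref{thm:bounded-degree} is stated for general PGFs. One then shows $F^{(d)}\to F$ uniformly.

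\textbf{Two small slips.} First, the derivative has no factor $t_y$: $\partial_{t_y}F=q_yM_y^2e^{-M_yt_y}(H_y(t)-t_y)$. With your extra $t_y$, every point with $t_y=0$ would be spuriously stationary. Second, it is $\matchnum(G_n)/n$, not $\matchnum(G_n)/(2n)$, that tends to $1-\tfrac12\big(\P(\text{left root unmatched})+\P(\text{right root unmatched})\big)$.
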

 
\begin{proofsketch}
    The proof proceeds in two steps. First, we observe that the compatibility graph sequence $G_n$ converges locally weakly in probability to a $2$-type hierarchical Poisson Galton--Watson tree. While~\cite{bordenave2013matchings} characterizes the asymptotic matching rate for convergent graph sequences, their explicit variational formula is derived only for single-type trees. We generalize their approach to the multi-type setting by establishing, for each truncation level $d$, an explicit formula for the matching rate of a bounded-degree version of the model (\Cref{thm:bounded-degree}). This involves characterizing the root exposure probability via a largest solution to a recursive distributional equation. We then show that the resulting variational objective $F^{(d)}$ converges uniformly to $F$ as $d \to \infty$, recovering the matching rate for the original Poisson Galton--Watson tree.
\end{proofsketch}
 
The full proof of~\Cref{thm:matching-rate} is given in~\Cref{sec:matching}. An immediate consequence of the theorem is the convergence of the asymptotic matching rate: since $0 \leq \matchnum(G_n)/n \leq 1$, we have the following corollary by Dominated Convergence Theorem.

\begin{corollary}
    Let $F$ be as defined in~\eqref{eq:F-2x2-general}. The asymptotic matching rate satisfies
    \begin{align}
        \match(\bL,\bR; \bfC) = \lim_{n\to \infty} \frac{\mathbb{E}[\matchnum(G_n)]}{n} = 1 - \max_{t_1,t_2\in[0,1]} F(t_1,t_2) \,.
    \end{align}
\end{corollary}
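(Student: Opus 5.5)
The corollary follows from Theorem~\ref{thm:matching-rate} by passing from convergence in probability to convergence in mean, using that the normalized matching size is bounded.

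Set $X_n \triangleq \matchnum(G_n)/n$ and $\eta^\star \triangleq 1 - \max_{t_1,t_2\in[0,1]} F(t_1,t_2)$. The constant $\eta^\star$ is well defined: $F$ is continuous on the compact square $[0,1]^2$, so the maximum is attained.

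\textbf{Step 1 (boundedness).} Any matching in $G_n$ uses each of the $n$ supply nodes at most once, so $0 \le \matchnum(G_n) \le n$. Hence $X_n \in [0,1]$ almost surely for every $n$.

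\textbf{Step 2 (convergence in mean).} Theorem~\ref{thm:matching-rate} gives $X_n \to \eta^\star$ in probability. Fix $\varepsilon > 0$. Since $X_n, \eta^\star \in [0,1]$, we have $|X_n - \eta^\star| \le 1$, and therefore
\begin{align*}
  \big|\E[X_n] - \eta^\star\big|
  \le \E\big[|X_n - \eta^\star|\big]
  \le \varepsilon + \P\big(|X_n - \eta^\star| > \varepsilon\big).
\end{align*}
The last probability tends to $0$, so $\limsup_n |\E[X_n]-\eta^\star| \le \varepsilon$. Letting $\varepsilon \downarrow 0$ shows that $\lim_{n\to\infty}\E[\matchnum(G_n)]/n$ exists and equals $\eta^\star$.

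This is the bounded convergence theorem for convergence in probability. Equivalently, one can note that bounded random variables are uniformly integrable, so convergence in probability upgrades to $L^1$ convergence.

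\textbf{Obstacles.} None at this level: the full difficulty lies in Theorem~\ref{thm:matching-rate}, which we take as given. The only point needing care is that the dominated convergence theorem is invoked for convergence in probability rather than almost sure convergence. The $\varepsilon$-splitting above handles this directly. Alternatively, one can pass to an almost surely convergent subsequence of an arbitrary subsequence and use uniqueness of the limit $\eta^\star$.
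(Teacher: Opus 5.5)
Your proposal is correct and follows the paper's argument: the paper also deduces the corollary from Theorem~\ref{thm:matching-rate} by noting $0 \le \matchnum(G_n)/n \le 1$ and invoking dominated (bounded) convergence. Your explicit $\varepsilon$-splitting just makes precise why that step is valid under convergence in probability rather than almost sure convergence.
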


Under the flexibility parametrization, we write $\match_{\OS}(B, \alpha, \alpha_f) \triangleq \match(B, 0;\bfC)$ and $\match_{\TS}(B, \alpha, \alpha_f) \triangleq \match(B/2, B/2;\bfC)$ for the asymptotic expected matching rates under the one-sided and two-sided allocations, respectively. The following two corollaries give explicit formulas for each.
 
\begin{corollary}\label{cor:matching_size_one}
    The matching rate for the one-sided allocation is
    \[
        \match_{\OS}(B, \alpha, \alpha_f)
        = 1 - \max_{t_1 \in [0,1]} F_{\OS}(t_1),
    \]
    where
    \begin{align}
        \begin{aligned}\label{eq:FOSdef}
        F_{\OS}(t_1)
        &= (1-B)\exp \left(-2\alpha\exp \left(-(2\alpha + (\alpha_f-\alpha)B)\,t_1\right)\right) \\
        &\quad + B\exp \left(-(\alpha+\alpha_f)\exp \left(-(2\alpha + (\alpha_f-\alpha)B)\,t_1\right)\right) \\
        &\quad + \exp \left(-(2\alpha + (\alpha_f-\alpha)B)\,t_1\right)
            \left(1 + (2\alpha + (\alpha_f-\alpha)B)\,t_1\right) - 1.
        \end{aligned}
    \end{align}
\end{corollary}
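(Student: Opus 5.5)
Corollary~\ref{cor:matching_size_one} follows by specializing Theorem~\ref{thm:matching-rate} (through the preceding corollary) to $(\bL,\bR)=(B,0)$, i.e.\ $\mathbf{p}=(1-B,B)$ and $\mathbf{q}=(1,0)$. The plan is to substitute these parameters into~\eqref{eq:F-2x2-general}, observe that $t_2$ drops out, and simplify what remains.

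\emph{Step 1: compute the $M_y$.} With $c_{11}=2\alpha$ and $c_{21}=\alpha+\alpha_f$,
\[
M_1 = c_{11}(1-B)+c_{21}B = 2\alpha(1-B)+(\alpha+\alpha_f)B = 2\alpha+(\alpha_f-\alpha)B .
\]
The value of $M_2$ will not matter, because it always appears multiplied by $q_2=0$.

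\emph{Step 2: simplify $F$.} Since $q_2=0$:
\begin{itemize}
\item In the first sum, the exponent for type $x$ reduces to $-c_{x1}e^{-M_1t_1}$. This gives the terms $(1-B)\exp(-2\alpha e^{-M_1t_1})$ and $B\exp(-(\alpha+\alpha_f)e^{-M_1t_1})$.
\item The second sum reduces to $e^{-M_1t_1}(1+M_1t_1)$.
\end{itemize}
Hence $F(t_1,t_2)=F_{\OS}(t_1)$ for every $t_2\in[0,1]$, with $F_{\OS}$ exactly as in~\eqref{eq:FOSdef}.

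\emph{Step 3: reduce the maximization.} Because $F$ does not depend on $t_2$,
\[
\max_{t_1,t_2\in[0,1]}F(t_1,t_2)=\max_{t_1\in[0,1]}F_{\OS}(t_1).
\]
This maximum exists since $F_{\OS}$ is continuous on the compact interval $[0,1]$. The expectation version of Theorem~\ref{thm:matching-rate} then gives
\[
\match_{\OS}(B,\alpha,\alpha_f)=1-\max_{t_1\in[0,1]}F_{\OS}(t_1).
\]

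There is no real obstacle here. The only point needing care is that the theorem is stated for arbitrary $\mathbf{p},\mathbf{q}$, including the degenerate value $q_2=0$ where one type has zero mass. The formula is continuous in the parameters and nothing in it divides by $q_y$. If the theorem's proof had implicitly assumed positive type probabilities, one could instead note that the model with $\bR=0$ is simply the model with a single demand type. The same substitution is valid either way.
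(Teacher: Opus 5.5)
Your proposal is correct and is the same argument the paper intends; it states the corollary without a separate proof as a direct consequence of Theorem~\ref{thm:matching-rate} via its expectation corollary. Substituting $\mathbf{p}=(1-B,B)$ and $\mathbf{q}=(1,0)$ gives $M_1=2\alpha+(\alpha_f-\alpha)B$, the factor $q_2=0$ removes all dependence on $t_2$, and your extra check on the zero-mass demand type covers a point the paper leaves implicit.
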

 
\begin{corollary}\label{cor:matching_size_two}
    The matching rate for the two-sided allocation is
    \[
        \match_{\TS}(B, \alpha, \alpha_f)
        = 1 - \max_{t_1, t_2 \in [0,1]} F_{\TS}(t_1, t_2),
    \]
    where
    \begin{align}
        \begin{aligned}\label{eq:FTSdef}
            F_{\TS}(t_1, t_2)
            &= \Big(1-\frac{B}{2}\Big)
                \exp \left(-2\alpha(1-B/2)\,e^{-M_1 t_1}
                      - (\alpha+\alpha_f)(B/2)\,e^{-M_2 t_2}\right) \\
            &\quad + \frac{B}{2}
                \exp \left(-(\alpha+\alpha_f)(1-B/2)\,e^{-M_1 t_1}
                      - \alpha_f B\,e^{-M_2 t_2}\right) \\
            &\quad + \Big(1-\frac{B}{2}\Big) e^{-M_1 t_1}(1+M_1 t_1)
                   + \frac{B}{2}\,e^{-M_2 t_2}(1+M_2 t_2) - 1,
        \end{aligned}
    \end{align}
    with $M_1 =  \left(2-\tfrac{B}{2} \right)\alpha + \tfrac{B}{2}\alpha_f$ and
    $M_2 =  \left(1-\tfrac{B}{2} \right)\alpha +  \left(1+\tfrac{B}{2} \right)\alpha_f$.
\end{corollary}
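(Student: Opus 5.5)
This follows by specializing \Cref{thm:matching-rate} (via the preceding corollary) to $\bL=\bR=B/2$, so $\pL=\pR=(1-B/2,\,B/2)$, with the flexibility parametrization of $\bfC$. The plan is a direct substitution into \eqref{eq:F-2x2-general}, keeping track of which index of $\bfC$ is summed against which weight vector.

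First compute $M_y=\sum_x c_{xy}p_x$ with $p=(1-B/2,B/2)$:
\[
M_1 = 2\alpha(1-B/2) + (\alpha+\alpha_f)\tfrac{B}{2} = \bigl(2-\tfrac{B}{2}\bigr)\alpha + \tfrac{B}{2}\alpha_f,
\qquad
M_2 = (\alpha+\alpha_f)(1-B/2) + 2\alpha_f\tfrac{B}{2} = \bigl(1-\tfrac{B}{2}\bigr)\alpha + \bigl(1+\tfrac{B}{2}\bigr)\alpha_f .
\]
These match the stated expressions.

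Next expand the first sum of $F$. For $x=1$ the exponent is $-\bigl(c_{11}q_1e^{-M_1t_1}+c_{12}q_2e^{-M_2t_2}\bigr) = -2\alpha(1-B/2)e^{-M_1t_1}-(\alpha+\alpha_f)(B/2)e^{-M_2t_2}$, weighted by $p_1=1-B/2$. For $x=2$ it is $-(\alpha+\alpha_f)(1-B/2)e^{-M_1t_1}-2\alpha_f(B/2)e^{-M_2t_2}$, weighted by $p_2=B/2$. Since $2\alpha_f\cdot B/2=\alpha_f B$, this gives the second line of \eqref{eq:FTSdef}. The second sum is immediate, with $q_1=1-B/2$ and $q_2=B/2$.

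The only point needing care is orientation. In $M_y$ the matrix is summed over its row index against $\pL$, while in the exponent it is summed over its column index against $\pR$. Because $\bfC$ is symmetric and $\pL=\pR$ here, no ambiguity arises. There is no genuine obstacle: the statement is an algebraic specialization of \Cref{thm:matching-rate}, and the maximization over $t_1,t_2\in[0,1]$ is inherited unchanged.
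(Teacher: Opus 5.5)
Your proposal is correct and is essentially how the paper obtains this corollary, which it states without a separate proof. It specializes \Cref{thm:matching-rate} (through the expectation corollary) to $\mathbf{p}=\mathbf{q}=(1-B/2,\,B/2)$ with the flexibility parametrization of $\mathbf{C}$, and your computations of $M_1$, $M_2$ and the two exponents all check out.
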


\subsubsection{Dominance regimes for flexibility allocation}
\label{sec:dominance-theorems}
 
Using Corollaries~\ref{cor:matching_size_one} and~\ref{cor:matching_size_two}, we characterize regimes in which the one-sided or the two-sided allocation dominates. Because both matching rates are given by explicit variational formulas, the comparison reduces to comparing the objectives $F_{\OS}$ and $F_{\TS}$. This is a tractable problem that allows us to study the dominance landscape across the parameter space $(\alpha, \alpha_f, B)$, where $\alpha$ is the baseline connection rate, $\alpha_f$ is the flexibility premium, and $B$ is the total flexibility budget. 
We begin with the case $B = 1$, where we show that the one-sided allocation always dominates.
 
\begin{theorem}[One-sided dominance for $B=1$]\label{thm:B_1}
    For any $\alpha, \alpha_f \ge 0$,
    \[
    \match_{\OS}(1, \alpha, \alpha_f) \ge \match_{\TS}(1, \alpha, \alpha_f)\,,
    \]
    with strict inequality whenever $\alpha \neq \alpha_f$.
\end{theorem}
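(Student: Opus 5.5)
The plan is to compare the two variational formulas directly. For every candidate $t^\ast$ for $F_{\OS}$ I would produce a point $(t_1,t_2)$ with $F_{\TS}(t_1,t_2)\ge F_{\OS}(t^\ast)$, using convexity of the exponential. That gives $\max F_{\TS}\ge\max F_{\OS}$, and by \Cref{cor:matching_size_one} and \Cref{cor:matching_size_two} this is exactly $\match_{\OS}(1,\alpha,\alpha_f)\ge\match_{\TS}(1,\alpha,\alpha_f)$.

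\emph{Specialization.} At $B=1$, set $c\triangleq\alpha+\alpha_f$. Then
\[
F_{\OS}(t)=\exp(-c\,e^{-ct})+e^{-ct}(1+ct)-1 ,
\]
and in $F_{\TS}$ we have $M_1=\tfrac32\alpha+\tfrac12\alpha_f$ and $M_2=\tfrac12\alpha+\tfrac32\alpha_f$. Two identities drive everything: $\tfrac12(M_1+M_2)=c$, and $M_1-M_2=\alpha-\alpha_f$. The case $c=0$ is trivial, so assume $c>0$; then $M_1,M_2>0$.

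\emph{Removing the box constraint.} The coupling below may need $t_y>1$, so I first show that the box $[0,1]^2$ in \eqref{eq:1-F} can be replaced by $[0,\infty)^2$ without changing the maximum. Differentiating \eqref{eq:F-2x2-general} in $t_y$ gives
\[
\partial_{t_y}F = M_y^2\,q_y\,e^{-M_y t_y}\,\bigl(E_y(t)-t_y\bigr),
\qquad
E_y(t)\triangleq\sum_x \frac{c_{xy}p_x}{M_y}\exp\Bigl(-\sum_{y'}c_{xy'}q_{y'}e^{-M_{y'}t_{y'}}\Bigr)\in[0,1].
\]
Hence $\partial_{t_y}F\le 0$ whenever $t_y\ge 1$, and clamping any coordinate exceeding $1$ down to $1$ does not decrease $F$. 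So $\sup_{[0,\infty)^2}F=\max_{[0,1]^2}F$. The same holds in one dimension for $F_{\OS}$.

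\emph{Coupling and Jensen.} Fix a maximizer $t^\ast\in[0,1]$ of $F_{\OS}$ and put $u=e^{-ct^\ast}>0$. Choose $t_y=ct^\ast/M_y\ge 0$, so that $e^{-M_y t_y}=u$ for both $y$.
\begin{itemize}
\item For a left node of type 1, the exponent in $F_{\TS}$ becomes $\alpha u+\tfrac12(\alpha+\alpha_f)u=M_1u$. For a left node of type 2 it becomes $\tfrac12(\alpha+\alpha_f)u+\alpha_f u=M_2u$.
\item The last group of terms is $\tfrac12 u(1+ct^\ast)+\tfrac12 u(1+ct^\ast)=u(1+ct^\ast)$.
\end{itemize}
Therefore
\[
F_{\TS}(t_1,t_2)=\tfrac12\bigl(e^{-M_1u}+e^{-M_2u}\bigr)+u(1+ct^\ast)-1\;\ge\; e^{-cu}+u(1+ct^\ast)-1=F_{OS}(t^\ast),
\]
where the inequality is Jensen's inequality for $x\mapsto e^{-xu}$ together with $\tfrac12(M_1+M_2)=c$. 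Combined with the previous step, this gives $\max_{[0,1]^2}F_{\TS}\ge\max_{[0,1]}F_{\OS}$.

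\emph{Strictness.} Since $u>0$, Jensen's inequality is strict exactly when $M_1\neq M_2$, that is, when $\alpha\neq\alpha_f$. The chain then gives $\max F_{\TS}>\max F_{\OS}$, hence strict inequality between the matching rates.

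\emph{Main obstacle.} The delicate step is justifying that the box $[0,1]^2$ can be enlarged. The coupling forces $t_y=ct^\ast/M_y$, which exceeds $1$ when $M_y<c$ and $t^\ast$ is close to $1$. I would handle this with the monotonicity-in-$t_y$ argument above, checking carefully that $E_y(t)\le 1$ for all $t\in[0,\infty)^2$; this holds because $E_y$ is a $c_{xy}p_x/M_y$-weighted average of exponentials of nonpositive numbers.
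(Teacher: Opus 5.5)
Your proposal is correct and follows the paper's proof essentially step for step: you specialize to $B=1$, extend the domain of $F_{\TS}$ to $[0,\infty)^2$ via the sign of $\partial_{t_y}F$, pick the coupled point $t_y = c\,t^\ast/M_y$ so that $e^{-M_1t_1}=e^{-M_2t_2}$, and compare the two exponentials by convexity. Your Jensen step is exactly the paper's AM--GM step, since $\alpha+\beta=M_1$ and $\beta+\alpha_f=M_2$ with $\beta=c/2$; the only cosmetic difference is that you verify the domain extension for the general objective $F$ rather than directly for $F_{\TS}$, which is equally valid.
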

 
\begin{remark}\label{rmk:full_budget}
    \Cref{thm:B_1} illustrates the tractability of the explicit matching-rate
    formula. For $B = 1$, it yields a short algebraic proof of one-sided
    dominance across the entire $(\alpha, \alpha_f)$ parameter space. This
    strengthens the corresponding results in~\cite{freund2024twov3}, where
    one-sided dominance is established for $\alpha = 0$
    in~\cite[Theorem~2]{freund2024twov3}, and for a subcritical region with
    $10^{-4} < \alpha < 0.77\alpha_f - 0.16$ and $\alpha + \alpha_f < e$
    in~\cite[Theorem~9]{freund2024twov3}, the latter via a computer-aided proof.
\end{remark}
 
For $B < 1$, the one-sided allocation no longer dominates uniformly. Nevertheless, one-sided dominance persists for large $\alpha_f$ when $\alpha = 0$.
 
\begin{theorem}[One-sided dominance for $\alpha = 0$]\label{thm:alpha_0}
    Let $\alpha = 0$. For any $B \in (0,1)$, there exists
    $\overline{\alpha}_f(B) > 0$ such that for all
    $\alpha_f > \overline{\alpha}_f(B)$,
    \[
        \match_{\OS}(B, 0, \alpha_f) > \match_{\TS}(B, 0, \alpha_f).
    \]
\end{theorem}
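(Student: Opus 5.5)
The plan is to prove matching asymptotic expansions for the two sides as $\alpha_f\to\infty$ and compare their leading exponential losses. With $\alpha=0$ every compatibility edge has at least one flexible endpoint, so the set of flexible nodes is a vertex cover. Under either allocation this cover has about $Bn$ nodes, hence $\match_{\OS},\match_{\TS}\le B$. I will show
\[
\match_{\OS}(B,0,\alpha_f)=B-Be^{-\alpha_f}(1+o(1)),
\qquad
\match_{\TS}(B,0,\alpha_f)\le B-\tfrac{B}{2}e^{-\alpha_f(1-B/2)}(1+o(1)).
\]
The theorem then follows, since $e^{-\alpha_f(1-B/2)}\gg e^{-\alpha_f}$ for $B>0$.

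\emph{Two-sided upper bound (combinatorial).} Take a maximum matching of $G_n$; every matched pair contains at least one flexible node. Let $N$ be the number of flexible nodes with no regular neighbour. Such a node is either unmatched or matched to another flexible node. Let $k$ be the number of flexible--flexible pairs, and let $U$ be the number of unmatched flexible nodes. Counting flexible nodes gives $F_{\rm flex}=2k+(\matchnum-k)+U$. Moreover $N\le 2k+U$, so $\matchnum = F_{\rm flex}-k-U\le F_{\rm flex}-N/2$. Under $(B/2,B/2)$ a flexible node has Poisson$(\alpha_f(1-B/2))$ regular neighbours. Hence by the law of large numbers (a second-moment bound suffices, since dependence is weak) $N/n\to Be^{-\alpha_f(1-B/2)}$ and $F_{\rm flex}/n\to B$ in probability. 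This gives the displayed upper bound for $\match_{\TS}$, exactly and not only asymptotically in $\alpha_f$. Alternatively one could bound $\max F_{\TS}$ from below by evaluating it at a suitable point, but the combinatorial bound is cleaner.

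\emph{One-sided lower bound (via \Cref{cor:matching_size_one}).} With $\alpha=0$, \eqref{eq:FOSdef} has $M=\alpha_f B$. Substituting $u=e^{-\alpha_f B t_1}\in[e^{-\alpha_f B},1]$ gives $\max F_{\OS}=\max_u G(u)$, where
\[
G(u)=(1-B)-B+Be^{-\alpha_f u}+u(1-\ln u).
\]
I need $\max_u G(u)\le 1-B+Be^{-\alpha_f}(1+o(1))$. I split into two ranges of $u$:
\begin{itemize}
\item For $u\le u_0(B)$, with $u_0(B)<1$ chosen so that $u_0(1-\ln u_0)\le 1-B/2$, I use $Be^{-\alpha_f u}-B\le 0$. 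This gives $G\le u(1-\ln u)\le 1-B/2$, which is strictly below $1-B$ plus the small correction only if handled correctly. So instead I fix $u_0$ with $u_0(1-\ln u_0)< 1-B$; such $u_0>0$ exists, and then $G<1-B$ on $u\le u_0$.
\item For $u\in[u_0,1]$, I write $u(1-\ln u)\le 1-(1-u)^2/2$. This gives $G(u)\le 1-B+Be^{-\alpha_f u}-(1-u)^2/2$.
\end{itemize}
On the second range, maximizing $Be^{-\alpha_f u}-(1-u)^2/2$ places the optimum at $1-u=O(\alpha_f e^{-\alpha_f})$. The value there is $Be^{-\alpha_f}(1+O(\alpha_f^2e^{-\alpha_f}))$. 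Elsewhere on $[u_0,1]$, the term $Be^{-\alpha_f u}$ is at most $Be^{-\alpha_f u_0}$, and the quadratic penalty dominates it once $1-u\gg \alpha_f e^{-\alpha_f}$. This yields $\match_{\OS}=1-\max G\ge B-Be^{-\alpha_f}(1+o(1))$.

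\emph{Comparison and main obstacle.} Combining the two bounds,
\[
\match_{\OS}-\match_{\TS}\ge \tfrac{B}{2}e^{-\alpha_f(1-B/2)}-Be^{-\alpha_f}(1+o(1)).
\]
This is positive for $\alpha_f$ larger than an explicit threshold $\overline{\alpha}_f(B)$, roughly requiring $e^{-\alpha_f B/2}<1/2$. The step I expect to be delicate is the uniform control of $\max_u G(u)$ in the one-sided analysis. One must rule out an interior maximizer at small $u$, where $Be^{-\alpha_f u}$ is not negligible. One must also ensure the error terms are genuinely $o(e^{-\alpha_f})$ rather than merely $o(1)$. If the quadratic bound on $u(1-\ln u)$ proves too lossy on $[u_0,1]$, I would instead use the exact stationarity equation $-\ln u=B\alpha_f e^{-\alpha_f u}$. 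I would show it has a unique root in $[u_0,1]$ for large $\alpha_f$, and evaluate $G$ there directly.
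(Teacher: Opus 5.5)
Your argument is sound, and on the two-sided half it takes a genuinely different route from the paper.

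\textbf{Two-sided half.} The paper stays inside the variational formula for both allocations. It lower-bounds $\max F_{\TS}$ by evaluating $F_{\TS}$ at the test point $(t_1,t_2)=(0,1)$ and linearizing with $e^{-x}\ge 1-x$. This gives the same leading term $\tfrac{B}{2}e^{-\alpha_f(1-B/2)}$, but with cross terms of order $\alpha_f e^{-\alpha_f(1+B/2)}$ that must then be dominated. Your vertex-cover count $\matchnum\le F_{\mathrm{flex}}-N/2$ bypasses the formula on that side entirely. It gives $\match_{\TS}\le B-\tfrac{B}{2}e^{-\alpha_f(1-B/2)}$ for every $\alpha_f$, with no error terms. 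It also explains the leading term as wasted flexibility: flexible nodes with no regular neighbour can only be matched to each other or left unmatched. You do not even need a law of large numbers. Since $\match$ is defined through $\mathbb{E}[\matchnum(G_n)]/n$, you can take expectations in the pathwise inequality, using $\mathbb{E}[F_{\mathrm{flex}}]=Bn$ and $\mathbb{E}[N]/n\to Be^{-\alpha_f(1-B/2)}$. What the paper's route buys is uniformity: every dominance result becomes a comparison of $F_{\OS}$ and $F_{\TS}$. Yours is specific to $\alpha=0$, which is exactly the case at hand.

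\textbf{One-sided half.} Your skeleton matches the paper's. Your $u_0$ plays the role of $c_B$, and your fallback via $-\ln u=B\alpha_f e^{-\alpha_f u}$ is precisely the paper's argument.

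Two repairs are needed in this half.
\begin{enumerate}
\item The displayed $G$ has the wrong constant. It should read $G(u)=Be^{-\alpha_f u}+u(1-\ln u)-B$, since the constant is $(1-B)-1$. This is the form your later bounds actually use.
\item The sentence saying the quadratic penalty dominates $Be^{-\alpha_f u_0}$ once $1-u\gg\alpha_f e^{-\alpha_f}$ is false. The inequality $(1-u)^2/2>Be^{-\alpha_f u_0}$ needs $1-u\gtrsim e^{-\alpha_f u_0/2}$. So the range $\alpha_f e^{-\alpha_f}\ll 1-u\lesssim e^{-\alpha_f u_0/2}$ is uncontrolled as written.
\end{enumerate}

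One fix for the second point is concavity. On $[u_0,1]$ you have $\frac{d^2}{du^2}\bigl[Be^{-\alpha_f u}-(1-u)^2/2\bigr]=B\alpha_f^2e^{-\alpha_f u}-1<0$ for large $\alpha_f$, so the stationary point near $u=1$ is the global maximum there.

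Simpler still, you never need the one-sided loss to accuracy $Be^{-\alpha_f}(1+o(1))$. It suffices that $\max G-(1-B)=o\bigl(e^{-\alpha_f(1-B/2)}\bigr)$. Fix $\delta\in(0,B/2)$.
\begin{itemize}
\item On $[1-\delta,1]$, the bound $u(1-\ln u)\le 1$ gives $G\le 1-B+Be^{-\alpha_f(1-\delta)}$.
\item On $[u_0,1-\delta]$, your quadratic bound gives $G\le 1-B+Be^{-\alpha_f u_0}-\delta^2/2<1-B$ for large $\alpha_f$.
\end{itemize}
Since $Be^{-\alpha_f(1-\delta)}<\tfrac{B}{2}e^{-\alpha_f(1-B/2)}$ once $e^{-\alpha_f(B/2-\delta)}<1/2$, the step you flagged as delicate is in fact routine.
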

 
When $B < 1$ and $\alpha > 0$, the asymmetry between the two sides can reverse this picture. The following two theorems establish conditions under which the two-sided allocation dominates for large $\alpha_f$: for all $\alpha > 0$ when $B \ge B^{\star}$, and at both extremes of $\alpha$ for all $B \in (0,1)$.
 
\begin{theorem}[Two-sided dominance for $B \in [ B^{\star}, 1)$]\label{thm:global}
    Let $B^{\star} \triangleq 2 - 2e/3$. For any $B \in [B^{\star}, 1)$ and
    $\alpha > 0$, there exists $\overline{\alpha}_f(\alpha, B) > 0$ such
    that for all $\alpha_f > \overline{\alpha}_f(\alpha, B)$,
    \[
        \match_{\TS}(B, \alpha, \alpha_f) > \match_{\OS}(B, \alpha, \alpha_f).
    \]
\end{theorem}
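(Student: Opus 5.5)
The plan is to let $\alpha_f\to\infty$ with $\alpha>0$ and $B\in[B^\star,1)$ fixed, compute the limiting values of $\max F_{\OS}$ and $\max F_{\TS}$ from Corollaries~\ref{cor:matching_size_one} and~\ref{cor:matching_size_two}, and show $\lim \max F_{\OS} > \lim\max F_{\TS}$. Since $\match=1-\max F$, a strict inequality in the limit gives $\match_{\TS}>\match_{\OS}$ for all $\alpha_f$ beyond some threshold $\overline{\alpha}_f(\alpha,B)$. I have not carried out the final inequality, so the precise role of $B^\star$ is a guess.

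\emph{Step 1 (one-sided limit).} Reparametrize by $u=e^{-Mt_1}\in[e^{-M},1]$, where $M=2\alpha+(\alpha_f-\alpha)B\to\infty$. Then $Mt_1=-\ln u$, and the flexible-supply term $B\exp(-(\alpha+\alpha_f)u)$ vanishes unless $u=O(1/\alpha_f)$. In that window the remaining terms give at most $F_{\OS}\to 0$. I would show
\[
\lim_{\alpha_f\to\infty}\max_{t_1} F_{\OS} = \max\Big\{0,\ \sup_{u\in(0,1]} G(u)\Big\},\qquad G(u)\triangleq(1-B)e^{-2\alpha u}+u(1-\ln u)-1.
\]
To justify exchanging the limit and the max, I would split $u\ge\delta$ from $u<\delta$ and use monotonicity of each term in $\alpha_f$. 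Since $G(1)=(1-B)e^{-2\alpha}>0$, the limit equals $\sup G>0$.

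\emph{Step 2 (two-sided limit).} Set $u_y=e^{-M_yt_y}$, with $M_1\sim B\alpha_f/2$ and $M_2\sim(1+B/2)\alpha_f$. The flexible-supply term is $\frac B2\exp(-(\alpha+\alpha_f)(1-\tfrac B2)u_1-\alpha_fBu_2)$. It survives only if $u_1,u_2=O(1/\alpha_f)$, and in that regime the whole objective is at most $(1-\frac B2)+\frac B2-1=0$. The contribution $\frac B2u_2(1+M_2t_2)$ tends to $0$ uniformly. Moreover, any $u_2$ of order $w/\alpha_f$ only multiplies the regular-supply term by $e^{-Bw/2}\le1$, so the optimum takes $u_2\to0$. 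This yields
\[
\lim_{\alpha_f\to\infty}\max F_{\TS}=\max\Big\{0,\ \sup_{u\in(0,1]}H(u)\Big\},\qquad H(u)\triangleq \big(1-\tfrac B2\big)\big[e^{-2\alpha(1-B/2)u}+u(1-\ln u)\big]-1.
\]
The uniformity argument is the same as in Step 1.

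\emph{Step 3 (the comparison, main obstacle).} It remains to prove $\sup_u G(u)>\max\{0,\sup_u H(u)\}$ for every $\alpha>0$ when $1-B/2\le e/3$, i.e.\ $B\ge B^\star$. The case $\sup G>0$ is already settled by Step 1. For the main inequality I would first try a pointwise comparison: for each $u$, pick an explicit point $v=v(u)$ with $G(v)>H(u)$. The first candidate is $v=u$, for which
\[
G(u)-H(u)=(1-B)e^{-2\alpha u}-\big(1-\tfrac B2\big)e^{-2\alpha(1-B/2)u}+\tfrac B2\,u(1-\ln u).
\]
This fails as $u\to0$, but there $H<0$. So the argument has to split by region:
\begin{itemize}
\item For $u$ near $0$, show $H(u)<G(1)$ directly, using $u(1-\ln u)\le1$.
\item For $u$ bounded away from $0$, control the difference via the $\frac B2u(1-\ln u)$ term.
\end{itemize}
I expect the threshold $1-\frac B2\le e/3$ to enter when bounding $H$ with the elementary estimates $u(1-\ln u)\le1$ and $\sup_{x}xe^{-x}=1/e$. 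This should make $\sup H$ strictly below the explicit lower bound $G(1)$, or below $G$ at its stationary point $-\ln u=2\alpha(1-B)e^{-2\alpha u}$, in a way that degenerates exactly at $B=B^\star$. If the pointwise route does not close, my fallback is to compare the stationarity equations of $G$ and $H$ directly and evaluate both suprema at their critical points. Strictness for every $\alpha>0$ should then follow because equality forces $\alpha=0$.
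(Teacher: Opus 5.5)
Your Steps~1 and~2 recover Propositions~\ref{prop:oneside} and~\ref{prop:twoside} (your $G=\Phi_{\OS}$ and $H=\Phi_{\TS}$), and the transfer back to finite $\alpha_f$ is fine. There is one correction in Step~2: $\frac B2u_2(1+M_2t_2)=\frac B2u_2(1-\ln u_2)$ does not vanish uniformly, since it equals $B/2$ at $u_2=1$. What is true is that for $u_2$ bounded away from $0$ both exponential terms die, so the objective is asymptotically at most $0$.

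The genuine gap is Step~3. It is the entire content of the theorem, you leave it open, and both concrete routes you sketch fail.

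\emph{Pointwise comparison.} The claim $G(u)>H(u)$ for $u$ bounded away from $0$ is false at small $\alpha$. As $\alpha\to0$, $G(u)-H(u)\to\frac B2\bigl(u(1-\ln u)-1\bigr)<0$ for every $u<1$. Meanwhile $H(u)\to(1-\frac B2)\,u(1-\ln u)-\frac B2$ is positive on most of $(0,1)$. For example, with $B=\frac12$ and $u=\frac12$ the limits are $H\approx0.385$ and $G-H\approx-0.038$. In fact $\sup G-\sup H\approx\alpha B^2/2$ to first order in $\alpha$, with both maximizers near $u=1$. So any estimate that forgets where the maximizer sits, such as $u(1-\ln u)\le1$, is too lossy.

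\emph{Bounding $\sup H$ by $G(1)$.} Placing $\sup H$ below $G(1)=(1-B)e^{-2\alpha}$ fails inside the theorem's range. For $B=0.2$ and $\alpha=\frac12$ one has $\sup H\approx0.340>G(1)\approx0.294$, even though $\sup G\approx0.348$.

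The missing idea is to evaluate $G$ at the maximizer of $H$, where stationarity collapses to the fixed-point identity $y^\ast=e^{-2\alpha(1-B/2)y^\ast}$. Suppose $2\alpha(1-\frac B2)<e$ and set $x=2\alpha(1-\frac B2)y$. Since $xe^{-x}\le 1/e$, we get $H''(y)=\frac{1-B/2}{y}\bigl[2\alpha(1-\frac B2)\,xe^{-x}-1\bigr]<0$. So $H$ is strictly concave, and its unique critical point is this $y^\ast\in(0,1)$. Then $e^{-2\alpha y^\ast}=(y^\ast)^{2/(2-B)}$, and by $e^x-1>x$,
\[
G(y^\ast)-H(y^\ast)=y^\ast\Bigl[(1-B)\bigl((y^\ast)^{B/(2-B)}-1\bigr)-\tfrac B2\ln y^\ast\Bigr]>-\frac{B^2}{2(2-B)}\,y^\ast\ln y^\ast>0 .
\]
This gives $\sup G>\sup H$.

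This is also where $B^\star$ actually enters. It is not a point where the inequality degenerates; it is the condition that the concave regime covers every $\alpha$ with $\sup H>0$. The quantity $\sup H$ is strictly decreasing in $\alpha$. At $\alpha=e/(2-B)$, the function $e^{-ey}+y(1-\ln y)$ is maximized at $y=1/e$ with value $3/e$. Hence $\sup H=\frac{3(2-B)}{2e}-1$ there, which is $\le0$ exactly when $B\ge 2-2e/3$. So for $B\ge B^\star$ one of two cases holds:
\begin{itemize}
\item $\sup H\le0<(1-B)e^{-2\alpha}=G(1)\le\sup G$; or
\item $2\alpha(1-\frac B2)<e$, and the fixed-point comparison above applies.
\end{itemize}
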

 
For $B < B^{\star}$, the comparison at intermediate values of $\alpha$ remains open, but two-sided dominance can be established at both extremes.

\begin{theorem}[Two-sided dominance for small or large $\alpha$]
\label{thm:small_large_alpha}
    Fix $B \in (0,1)$.
    \begin{enumerate}[label=$\mathrm{(\roman*)}$]
        \item \textup{(Small-$\alpha$ regime.)}
            There exists $\alpha_{\mathrm{low}}(B)> 0$ such that for any
            $0< \alpha < \alpha_{\mathrm{low}}(B)$, there exists
            $\overline{\alpha}_f(\alpha, B) > 0$ such that for all
            $\alpha_f > \overline{\alpha}_f(\alpha, B)$,
            \[
                \match_{\TS}(B, \alpha, \alpha_f)
                > \match_{\OS}(B, \alpha, \alpha_f).
            \]
        \item \textup{(Large-$\alpha$ regime.)}
            There exists $\alpha(B) > 0$ such that for any
            $\alpha > \alpha(B)$, there exists
            $\overline{\alpha}_f(\alpha, B) > 0$ such that for all
            $\alpha_f > \overline{\alpha}_f(\alpha, B)$,
            \[
                \match_{\TS}(B, \alpha, \alpha_f)
                > \match_{\OS}(B, \alpha, \alpha_f).
            \]
    \end{enumerate}
\end{theorem}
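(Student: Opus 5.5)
The plan is to let $\alpha_f \to \infty$ with $\alpha$ and $B$ fixed, using the variational formulas of Corollaries~\ref{cor:matching_size_one} and~\ref{cor:matching_size_two}. I would compute the limits
\[
\Phi_{\OS}(\alpha,B) \triangleq \lim_{\alpha_f\to\infty}\max_{t_1} F_{\OS}(t_1), \qquad \Phi_{\TS}(\alpha,B) \triangleq \lim_{\alpha_f\to\infty}\max_{t_1,t_2} F_{\TS}(t_1,t_2).
\]
The theorem then reduces to showing $\Phi_{\TS}(\alpha,B) < \Phi_{\OS}(\alpha,B)$ for $\alpha$ small and for $\alpha$ large. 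Since both maxima converge, a strict gap between the limits yields the threshold $\overline{\alpha}_f(\alpha,B)$ in both parts (i) and (ii).

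\textbf{Reparametrization and the one-sided limit.} I would change variables to $u = e^{-M t_1}$, so that $M t_1 = -\ln u$ and $u \in [e^{-M},1]$. This turns the last term of \eqref{eq:FOSdef} into $u(1-\ln u)$. Because $M \sim B\alpha_f \to \infty$, the range of $u$ fills $(0,1]$, and the term $B\exp(-(\alpha+\alpha_f)u)$ is negligible unless $u = O(1/\alpha_f)$. I would therefore analyze two scales.
\begin{itemize}
\item On the scale $u = s/\alpha_f$, the objective tends to $B e^{-s} - B$, whose supremum is $0$, approached as $s\to 0$.
\item On the scale $u = \Theta(1)$, the objective tends to $g_{\OS}(u) = (1-B)e^{-2\alpha u} + u - u\ln u - 1$.
\end{itemize}
Hence $\Phi_{\OS} = \max\{0, \sup_{u\in(0,1]} g_{\OS}(u)\}$. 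I would make this rigorous by bounding the error uniformly in $u$ on each scale.

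\textbf{The two-sided limit.} I would do the same with $u_1 = e^{-M_1 t_1}$ and $v = e^{-M_2 t_2}$, where both $M_1 \sim (B/2)\alpha_f$ and $M_2 \sim (1+B/2)\alpha_f$ diverge. Each of $u_1, v$ may sit either at scale $\Theta(1/\alpha_f)$ or at scale $\Theta(1)$. I expect the exponents in \eqref{eq:FTSdef} to behave as follows:
\begin{itemize}
\item the $\alpha_f (B/2) v$ term forces $v = O(1/\alpha_f)$ whenever the regular-left exponential is to contribute;
\item the flexible-left term dies unless both $u_1$ and $v$ are $O(1/\alpha_f)$.
\end{itemize}
Enumerating these finitely many scale regimes gives $\Phi_{\TS}$ as a maximum of a few explicit one- or two-dimensional limiting objectives. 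These have the form $(1-B/2)\exp(-2\alpha(1-B/2)u_1 - (B/2)\sigma) + (B/2)(\cdots) + (1-B/2)u_1(1-\ln u_1) - 1$, with $\sigma = \alpha_f v$ a rescaled variable, in the spirit of the unbounded-degree computations for $B=1$ in Theorem~\ref{thm:B_1}.

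\textbf{Small- and large-$\alpha$ comparisons.} For part (i), I would first compute $\Phi_{\OS}(0,B)$ and $\Phi_{\TS}(0,B)$. Heuristically both correspond to matching rate $B$ at $\alpha=0$: only flexible agents are useful on the one-sided side, and the flexible nodes saturate on the two-sided side. I would then expand both quantities to first order in $\alpha$, either via an envelope (Danskin) argument at the maximizer or via a direct expansion. The claim to establish is that the two-sided derivative of $1-\Phi$ strictly exceeds the one-sided one: regular agents on both sides gain from the opposite flexible side, whereas one-sided regular-left agents see only rate $2\alpha$ regular partners. A strict first-order gap gives $\alpha_{\mathrm{low}}(B) > 0$, and this gap does not degenerate as $B\to 0$ because it is driven by a ratio, not by $B$. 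For part (ii), I would expand $\sup_u g_{\OS}$ and the two-sided limiting objective as $\alpha\to\infty$, where the maximizers move to $u = \Theta(\ln\alpha/\alpha)$. I would then compare the leading correction terms, expected to be of order $e^{-c\alpha}$ or $\ln\alpha/\alpha$ with different constants.

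The main obstacle is the two-sided limit: correctly identifying which multi-scale regime attains the supremum of $F_{\TS}$, and proving uniform convergence across regimes so that $\lim\max = \max\lim$. I would handle this by bounding $F_{\TS}$ above separately on each region $\{u_1 \lessgtr \alpha_f^{-1/2}\} \times \{v \lessgtr \alpha_f^{-1/2}\}$ and matching these bounds with explicit lower bounds.
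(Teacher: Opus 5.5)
Your reduction to the $\alpha_f\to\infty$ limits and the transfer back to finite $\alpha_f$ are the same as in the paper. Your two-scale enumeration does reproduce Propositions~\ref{prop:oneside} and~\ref{prop:twoside}: $U_{\OS}(\alpha)=\sup_{u\in(0,1]}g_{\OS}(u)$ and $U_{\TS}(\alpha)=\max\{0,\sup_{y\in(0,1]}g_{\TS}(y)\}$, where $g_{\TS}(y)\triangleq(1-\tfrac{B}{2})\bigl[e^{-2\alpha(1-B/2)y}+y(1-\ln y)\bigr]-1$ and the $0$ comes from your regime $s,\sigma\to 0$.

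The genuine gap is in part (ii). You propose to expand both limits as $\alpha\to\infty$ and compare correction terms. That misses the fact that makes (ii) immediate: the zero floor in the two-sided limit switches on at a finite $\alpha$. Since $\partial_\alpha g_{\TS}(y)<0$ for every $y\in(0,1]$, $\sup_y g_{\TS}$ is decreasing in $\alpha$, and it tends to $-B/2<0$. Hence there is $\alpha(B)$ with $U_{\TS}(\alpha)=0$ for all $\alpha\ge\alpha(B)$, i.e.\ the two-sided market becomes asymptotically perfectly matched. Meanwhile $U_{\OS}(\alpha)\ge g_{\OS}(1)=(1-B)e^{-2\alpha}>0$. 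That is the entire proof of (ii).

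The asymptotics you anticipate are also wrong. For $g_{\OS}$, the critical point near $u\approx \ln\alpha/(2\alpha)$ is a local minimum (there $g_{\OS}''>0$). The global maximizer stays within $O(\alpha e^{-2\alpha})$ of $u=1$, and $\sup g_{\OS}=(1-B)e^{-2\alpha}(1+o(1))$. There are no $\ln\alpha/\alpha$ corrections with competing constants to compare; one side is exactly $0$. As written, (ii) is not an argument and steers toward a comparison that does not exist.

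Part (i), by contrast, is a legitimate alternative to the paper's route, although you only assert the key inequality rather than derive it. Here is how it goes. At $\alpha=0$ both limits equal $1-B$, with unique maximizer at $1$; in particular $\sup g_{\TS}>0$, so the floor is inactive for small $\alpha$. Danskin's theorem (after extending to $[0,1]$ by continuity) gives right-derivatives $-2(1-B)$ and $-2(1-\tfrac{B}{2})^2$. Hence $U_{\OS}(\alpha)-U_{\TS}(\alpha)=\tfrac{B^2}{2}\alpha+o(\alpha)$, which is positive on some $(0,\alpha_{\mathrm{low}}(B))$. This proves (i) as stated.

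However, your claim that this gap does not degenerate as $B\to 0$ is false: the first-order gap is $B^2\alpha/2$. A local expansion at $\alpha=0$ also yields only an inexplicit threshold, with no control uniform in $B$. The paper instead evaluates $g_{\OS}$ at the exact two-sided maximizer $y^*$, which for $2\alpha(1-B/2)<e$ is unique and satisfies $y^*=e^{-2\alpha(1-B/2)y^*}$. This collapses the difference to $y^*\bigl[(1-B)\bigl((y^*)^{B/(2-B)}-1\bigr)-\tfrac{B}{2}\ln y^*\bigr]$, which is positive by $e^x-1>x$. It gives the explicit threshold $\alpha_{\mathrm{low}}(B)=e/(2-B)\ge e/2$, on which the uniformity claimed in Remark~\ref{rmk:Thm3} rests.
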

 
Together, parts~(i) and~(ii) of \Cref{thm:small_large_alpha} show that for any $B \in (0,1)$, two-sided dominance holds for sufficiently large $\alpha_f$ whenever $\alpha$ is either sufficiently small or sufficiently large. For $B < B^{\star}$, the intermediate regime $\alpha \in [\alpha_{\mathrm{low}}(B),\, \alpha(B)]$, if non-empty, remains analytically open.
 
\begin{remark}\label{rmk:Thm3}
\Cref{thm:small_large_alpha}(i) parallels \cite[Theorem~3]{freund2024twov3}, which establishes two-sided dominance for $\alpha < \alpha_\star(B)$ and $\alpha_f > \alpha^f_\star(B,\alpha)$ (see \prettyref{eq:fmz}). Their theorem provides explicit expressions for $\alpha^f_\star(B,\alpha)$ and $\alpha_\star(B)$. For completeness, we detail in \prettyref{app:comp} how these results can be recovered directly from the variational formula. Furthermore, our approach extends this admissible regime. While the threshold $\alpha_\star(B)$ vanishes as $B \to 0$, whereas  $\alpha_{\mathrm{low}}(B)$ in Theorem~\ref{thm:small_large_alpha}(i) remains bounded away from zero uniformly in $B$. 
\end{remark}

\subsection{Related Literature} \label{sec:related}
 
\paragraph{Flexibility design.}
Our paper contributes to the operations literature on flexibility design, where a central question is how limited flexibility should be allocated to improve throughput and robustness. Classical work studies this through structural design choices: the chaining principle and sparse process-flexibility in manufacturing~\cite{jordan1995principles,ChouChuaTeoZheng2010,bassamboo2010flexibility,ChenMaZhangZhou2019}, cross-training in workforce management~\cite{wallace2004resource,IravaniVanOyenSims2005}, and resource pooling in service systems~\cite{KesavanStaatsGilland2014,ChodMarkakisTrichakis2021}. A shared insight running through this work is that the \emph{pattern} of flexibility, and not just its total amount, shapes system performance. In our setting, flexibility is not encoded as a deterministic compatibility structure chosen edge by edge; instead, it is allocated across the two sides of a marketplace, modulating the probabilities with which compatibility edges form.
 
Two papers study flexibility allocation directly in random matching markets. In a companion work~\cite{ameen2026uniformity}, flexibility is modeled spatially: supply nodes embedded in a metric space are assigned service ranges ex~ante, and compatibility edges form when demand falls within a supply node's range. That paper establishes a uniformity principle: among all supply-side allocations with the same total flexibility budget, a more uniform distribution of service ranges yields a larger expected maximum matching. The present paper addresses a complementary question: rather than how to distribute flexibility \emph{within} one side, we ask how to split a fixed flexibility budget \emph{between} the two sides. Our primary reference is~\cite{freund2024twov3}, which identifies two competing effects governing this split (flexibility cannibalization and flexibility asymmetry), through an analysis of the Karp--Sipser algorithm, and establishes parameter regimes in which each effect dominates.

\paragraph{Matchings on random graphs.}
A large literature studies maximum matchings in sparse random graphs, beginning with the Karp--Sipser (KS) algorithm and its asymptotic analysis in Erd\H{o}s--R\'enyi (ER) graphs~\cite{karp1981maximum,aronson1998maximum} and extending to belief-propagation techniques from statistical physics~\cite{mezard2009information,zdeborova2006number}. Our compatibility graph is naturally viewed as a sparse bipartite stochastic block model~\cite{holland1983stochastic,abbe2018community}, with block memberships determined by the flexibility types of supply and demand nodes. While the KS algorithm exactly finds a maximum matching in the subcritical regime~\cite{karp1981maximum,aronson1998maximum}, it leaves an uncharacterized residual in the supercritical regime even for ER graphs. Dominance comparisons based on KS therefore rely on bounds rather than exact matching rates~\cite{freund2024twov3}, a limitation that our variational formula overcomes directly.

\paragraph{Local weak convergence.}
Our main technical tool is the framework of local weak convergence~\cite{benjamini2001recurrence,aldous2004objective,aldous2001zeta,vanderhofstad2016random}, which replaces the finite graph by its random tree limit and characterizes macroscopic quantities through recursions on that limit tree. For maximum matchings, this program is carried out by~\cite{bordenave2013matchings}, who derive the asymptotic matching size via a recursive distributional equation on single-type unimodular Galton--Watson trees. We adapt that framework to the bipartite stochastic block model, whose limiting tree is a multi-type Poisson Galton--Watson tree. The single-type recursion of~\cite{bordenave2013matchings} generalizes to the type-dependent operator $\Theta$ (\Cref{def:Theta}), whose fixed points yield our variational formula. Local weak convergence framework has found applications ranging from epidemic forecasting on networks~\cite{alimohammadi2023epidemic} to dense subgraphs for load balancing~\cite{anantharam2016densest} and planted spanning trees~\cite{moharrami2025planted}.

\paragraph{Message passing (belief propagation).}
Closely related to our work is the body of literature on message passing, or belief propagation: on tree graphs belief propagation is exact, and on sparse locally tree-like graphs it yields asymptotically correct fixed-point equations~\cite{mezard2009information,mezard2002analytic}. Belief propagation has also been applied in artificial intelligence~\cite{pearl2014probabilistic,pearl2022fusion} and computer vision~\cite{freeman2000learning}. In operations, message-passing and belief-propagation techniques have been used to study information deadlocks in matching markets with costly compatibility inspections~\cite{immorlica2022matching}, the role of signaling and interviews in random matching markets~\cite{allman2025signaling} and community detection in stochastic block models~\cite{abbe2018community}.

\section{Analysis for the matching rate formula}
\label{sec:matching}

To avoid degenerate notation, we prove the formula under the assumption that all node types have positive mean degree. The degenerate cases follow by replacing $C$ with $C+\varepsilon$ (elementwise), applying the nondegenerate result, and letting $\varepsilon\to 0 $; the normalized matching size changes by at most the number of added edges divided by $n$, while the variational objective converges uniformly. 
Our analysis adapts the local weak convergence framework of~\cite{bordenave2013matchings} from single-type to multi-type unimodular graphs, covering in particular bipartite stochastic block models. We begin by identifying the local weak limit of $(G_n)$.
 
\begin{proposition}[Local weak limit]
\label{prop:BSBM-LWC}
    The sequence $G_n$ of compatibility graphs converges locally weakly in probability to a $2$-type hierarchical Poisson Galton--Watson tree $\mathcal{T}$, defined as follows for each $x, y \in \{1,2\}$.
    \begin{itemize}
        \item The root is a supply node (resp.\ demand node) with probability $1/2$. Conditional on being supply (resp.\ demand), it is of type~$1$ with probability $p_1$ (resp.\ $q_1$).
        \item A supply node of type $x$ has an independent $\mathrm{Pois}(c_{xy}\,q_y)$ number of demand children of type~$y$.
        \item A demand node of type $y$ has an independent $\mathrm{Pois}(c_{xy}\,p_x)$ number of supply children of type~$x$.
    \end{itemize}
\end{proposition}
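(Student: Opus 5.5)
We plan to use the standard second-moment route for local weak convergence in probability. Fix a radius $r$ and a finite rooted, marked tree $(T,o)$ of depth at most $r$. Marks record the side (supply/demand) and the type in $\{1,2\}$. Let $N_n(T)$ be the number of vertices $v\in V(G_n)$ whose marked ball $B_r(G_n,v)$ is isomorphic to $(T,o)$. It suffices to show that
\[
\frac{N_n(T)}{2n}\xrightarrow{\mathrm{p.}}\P\big(B_r(\mathcal{T},o)\cong (T,o)\big).
\]
Since the space of such finite marked trees is countable and the limiting probabilities sum to one, this is equivalent to convergence in probability of the empirical neighbourhood distribution in the local topology.

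\emph{First moment.} Pick a uniform vertex $v$. With probability $1/2$ it is a supply vertex, and given that, its type is $x$ with probability $p_x$. The same holds for demand vertices with $q_y$. Explore $B_r(G_n,v)$ by breadth-first search, revealing types together with edges. Let $n^{\mathrm{L}}_x\sim\mathrm{Bin}(n,p_x)$ and $n^{\mathrm{R}}_y\sim\mathrm{Bin}(n,q_y)$ be the type counts. By the law of large numbers, $n^{\mathrm{L}}_x/n\to p_x$ and $n^{\mathrm{R}}_y/n\to q_y$ with high probability.

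Conditional on the types, a supply vertex of type $x$ has a $\mathrm{Bin}(n^{\mathrm{R}}_y,c_{xy}/n)$ number of type-$y$ demand neighbours. This converges in total variation to $\mathrm{Pois}(c_{xy}q_y)$. The demand side gives $\mathrm{Pois}(c_{xy}p_x)$ in the same way. During exploration only $O_P(1)$ vertices are revealed, so two quantities are negligible:
\begin{itemize}
\item removing explored vertices changes the binomial parameters by $O(1)$;
\item the probability that the ball contains a cycle is $O(1/n)$, since each of the $O(1)$ pairs among explored vertices has an edge with probability $O(1/n)$.
\end{itemize}
Standard Poisson coupling, summed over the finitely many steps, then shows that $\E[N_n(T)]/(2n)$ converges to the target probability. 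This is the multi-type analogue of the Erd\H{o}s--R\'enyi computation. The hierarchical two-sided structure only alternates which Poisson means apply.

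\emph{Second moment.} Take two independent uniform vertices $v,w$. We show that their $r$-balls are asymptotically independent, each distributed as $B_r(\mathcal{T},o)$. Run the exploration from $v$, then from $w$. The explored sets intersect, or become joined by an edge, with probability $O(1/n)$, because each set has size $O_P(1)$. Conditional on disjointness, the second exploration sees binomial parameters that differ by $O(1)$ from the first, so the same coupling applies. Hence $\E[N_n(T)^2]/(4n^2)\to \P(B_r(\mathcal{T},o)\cong T)^2$, the variance of $N_n(T)/(2n)$ tends to $0$, and Chebyshev's inequality gives the claim.

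\emph{Main obstacle.} The one step that needs care is uniform control of the exploration size, to justify the $O_P(1)$ claims. We handle it by tightness: the size of $B_r(G_n,v)$ is stochastically dominated by an $r$-generation branching process with $\mathrm{Bin}(n,\max c_{xy}/n)$ offspring per type. This gives a bound $\P(|B_r|>K)\le\epsilon$ uniformly in $n$. We truncate on this event before applying the coupling and collision bounds.
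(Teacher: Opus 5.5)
Your argument is correct, but it is not how the paper proceeds. The paper gives no direct argument and simply invokes \cite[Theorem~3.14]{VdH-IRG}, the general local-convergence theorem for inhomogeneous random graphs.

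To apply that theorem, one views $G_n$ as an inhomogeneous random graph on $2n$ vertices with the four-point type space $\{\text{supply},\text{demand}\}\times\{1,2\}$. The kernel is $2c_{xy}$ between supply type $x$ and demand type $y$, since $c_{xy}/n = 2c_{xy}/(2n)$, and zero within a side. One then checks that the empirical type distribution converges to $(\tfrac12 p_x,\tfrac12 q_y)$. This holds by the law of large numbers, because types are i.i.d.\ and each side has exactly $n$ vertices. The Poisson means $c_{xy}q_y$ and $c_{xy}p_x$ then come out of the kernel.

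Your route consists of four pieces: exploration coupling with binomial-to-Poisson approximation, an $O(1/n)$ collision/cycle bound, tightness via domination by a branching process with $\mathrm{Bin}(n,\max c_{xy}/n)$ offspring, and a second-moment/Chebyshev step from asymptotic independence of two balls. This is essentially the finite-type case of the argument behind the cited theorem, written out by hand.

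The trade-off is as follows. The citation is shorter and covers general kernels, but it leaves the verification of its hypotheses implicit. Your version is self-contained and uses only the bounded, finite-type structure. It also makes transparent why non-root vertices have the same Poisson offspring law as the root: removing the parent and the $O(1)$ explored vertices perturbs the binomial parameters by only $O(1)$. That is exactly the feature the paper later relies on when it identifies $\mathcal{T}$ with the tree of Definition~\ref{def:UHGW-2type} in the Poisson case.
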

 
\begin{proof}
    This follows directly from~\cite[Theorem~3.14]{VdH-IRG}.
\end{proof}
 
\paragraph{Proof strategy.}
The key tool linking maximum matchings to the local weak limit is a recursion that requires bounded node degrees. We therefore proceed in two steps: (i)~we establish a variational formula for the matching rate in a $2$-type unimodular bounded-degree Galton--Watson tree (see~\Cref{def:UHGW-2type} and~\Cref{thm:bounded-degree}); (ii)~we recover~\Cref{thm:matching-rate} by approximating the Poisson model with a sequence of such bounded-support models via a truncation argument. Supporting lemmas introduced in this section are proved in Appendix~\ref{app:matching}.
 
\paragraph{Notation.}
For a graph $G$ and node $u$, we write $G - u$ for the graph obtained by deleting $u$ and all its incident edges, and $\mathrm{deg}_G(u)$ for the degree of $u$ in $G$. For an integer $d \ge 1$, let $\mathcal{G}_d$ denote the class of all graphs with maximum degree at most $d$. We adopt the conventions $0^{-1} = \infty$ and $\infty^{-1} = 0$, and write $\mathcal{P}([0,1])$ for the set of probability measures on $[0,1]$.
 
For a distribution $\pi$ on the non-negative integers, its \emph{excess distribution} $\widehat\pi$ is defined by
\[
    \widehat\pi(k)
    = \frac{(k+1)\,\pi(k+1)}{\sum_i i\,\pi(i)},
    \qquad k = 0, 1, \ldots
\]
For $x, y \in \{1,2\}$, we denote by $\piL_x$ and $\piR_y$ the degree distributions of supply nodes of type~$x$ and demand nodes of type~$y$, with PGFs $\phi_x$ and $\psi_y$, respectively. The corresponding excess degree distributions are $\pihatL_x$ and $\pihatR_y$, with PGFs satisfying
\begin{align}\label{eq:pgf-hats}
    \widehat\phi_x(s)
    \triangleq \mathbb{E}_{X\sim\pihatL_x} \left[s^X\right]
    = \frac{\phi_x'(s)}{\phi_x'(1)},
    \qquad
    \widehat\psi_y(s)
    \triangleq \mathbb{E}_{X\sim\pihatR_y} \left[s^X\right]
    = \frac{\psi_y'(s)}{\psi_y'(1)},
\end{align}
where $\phi_x'$ and $\psi'_y$ denote the first derivatives of $\phi_x$ and $\psi_y$ respectively. A summary of all notation is provided in Table~\ref{tab:notation} in Appendix~\ref{apx-table}.

\subsection{Proof of Theorem~\ref{thm:matching-rate}} \label{sec:proof-main}

We show how the variational formula extends from bounded-degree graph sequences to the Poisson setting, following the truncation machinery of~\cite[Section~4.2]{bordenave2013matchings}. For a compatibility graph $G_n = (V_n, E_n)$, define its $d$-truncation $G_n^d$ by isolating every vertex of degree greater than $d$, i.e.\ removing all edges incident to such vertices. Since isolating a vertex reduces the matching size by at most one,
\begin{equation}\label{eq:boundgap}
    \left|\frac{\matchnum(G_n)}{n} - \frac{\matchnum(G_n^d)}{n}\right|
    \le \frac{|\{v \in V_n : \mathrm{deg}(v) > d\}|}{n}.
\end{equation}
To identify the local weak limit of $(G_n^d)$ and apply the appropriate matching rate formula, we introduce the following bounded-degree tree model.

\begin{definition}[$2$-type unimodular bounded-degree Galton--Watson tree]
\label{def:UHGW-2type}
    Fix type proportions $\mathbf{p} = (p_1, p_2)$ for supply and $\mathbf{q} = (q_1, q_2)$ for demand. For each supply type $x \in \{1,2\}$, let $\piL_x$ be an integer-valued distribution with bounded support and PGF $\phi_x$; for each demand type $y \in \{1,2\}$, let $\piR_y$ be an integer-valued distribution with bounded support and PGF $\psi_y$. Fix type-mixing proportions $(\aL_{x1}, \aL_{x2})$ and $(\aR_{y1}, \aR_{y2})$ satisfying the unimodularity constraint
    \begin{align}\label{eq:unimodularity-constraint}
        p_x\,\aL_{xy}\,\phi_x'(1)
        = q_y\,\aR_{yx}\,\psi_y'(1)
        \qquad \text{for all } x, y \in \{1,2\},
    \end{align}
    where $\phi_x'(1) = \mathbb{E}_{D\sim\piL_x}[D]$ and
    $\psi_y'(1) = \mathbb{E}_{D\sim\piR_y}[D]$.
    The associated rooted $2$-type hierarchical Galton--Watson tree $T$, is constructed as follows.
    \begin{itemize}
        \item The root $\circ$ is a supply node with probability $1/2$ and a demand node with probability $1/2$; conditional on being supply (resp.\ demand), it is of type~$1$ with probability $p_1$ (resp.\ $q_1$).
        \item A root supply node of type $x$ (resp.\ demand node of type $y$) produces a number of offspring drawn independently from $\piL_x$ (resp.\ $\piR_y$). Each offspring is independently a demand node of type~$y$ (resp.\ supply node of type~$x$) with probability $\aL_{xy}$ (resp.\ $\aR_{yx}$).
        \item For non-root nodes, the offspring count is drawn from the excess distribution $\pihatL_x$ (resp.\ $\pihatR_y$) in place of $\piL_x$ (resp.\ $\piR_y$), with offspring types assigned identically.
    \end{itemize}
\end{definition}

When $\pi$ is Poisson with rate $\lambda$, its excess distribution $\widehat\pi$ is again Poisson with the same rate, and Definition~\ref{def:UHGW-2type} recovers the tree $\mathcal{T}$ of Proposition~\ref{prop:BSBM-LWC}. For bounded degree distributions, the following theorem establishes the following variational formula, whose proof is given in Section~\ref{sec:bounded-degree}.

\begin{theorem}\label{thm:bounded-degree}
    Consider the $2$-type bounded-degree hierarchical Galton--Watson tree $T$ from Definition~\ref{def:UHGW-2type}, and suppose that for each $x, y \in \{1,2\}$, the support of $\piL_x$ and $\piR_y$ is contained in $\{0, \ldots, d\}$. For any sequence $G_n^d \in \mathcal{G}_d$ of graphs that converges locally weakly in probability to $T$, 
    \begin{align}\label{eq:matched-bdd}
         \frac{ \matchnum(G_n^d) } {n}
        \, \xrightarrow[]{~\mathrm{p.}~} \, 1 - \max_{t_1,\,t_2 \in [0,1]} F(t_1, t_2),
    \end{align}
    as $n\to\infty$, where
    \begin{align}\label{eq:definition-of-F-general}
        F(t_1, t_2)
        \triangleq
        \sum_{x=1}^2 p_x\,\phi_x \bigg(
            \sum_{y=1}^2 \aL_{xy} \bigg(
                1 - \frac{\psi_y'(1-t_y)}{\psi_y'(1)}
            \bigg)
        \bigg)
        + \sum_{y=1}^2 q_y\left(\psi_y(1-t_y) + t_y\,\psi_y'(1-t_y)\right)
        - 1,
    \end{align}
    with $\phi_x$ and $\psi_y$ the PGFs of $\piL_x$ and $\piR_y$,
    respectively.
\end{theorem}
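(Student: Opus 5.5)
We follow the strategy of \cite{bordenave2013matchings}, adapted to two types on each side. First, since $G_n^d\in\mathcal{G}_d$ converges locally weakly in probability to the unimodular tree $T$, we invoke their general result. It says $\matchnum(G_n^d)/n$ converges in probability to $1-\frac{1}{2}\cdot 2\,\P(\text{root exposed})$, properly normalized. Here we have $2n$ nodes with half supply and half demand, so the limit is $1-\sum_x p_x\P(\text{supply root of type }x\text{ unmatched})$ averaged with the demand side. The exposure probability is expressed through the largest solution of the recursive distributional equation (RDE) for the monotone message-passing map $Y_v=\big(\sum_{w\text{ child of }v}Y_w\big)^{-1}$ on $T$. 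Bordenave--Lelarge--Salez prove this for any unimodular bounded-degree limit; their proof uses only unimodularity and the tree structure, not single-typeness. So the first step is to check that Definition~\ref{def:UHGW-2type} is unimodular, which is exactly the constraint \eqref{eq:unimodularity-constraint}. We would state the multi-type version of their RDE. The messages propagating downward from a type-$x$ supply node (resp.\ type-$y$ demand node) have laws $\mu_x$ (resp.\ $\nu_y$). The operator $\Theta$ maps $(\mu,\nu)$ to the laws of $\big(\sum_{i\le \widehat D} Y_i\big)^{-1}$, where $\widehat D\sim\pihatL_x$ and the $Y_i$ are i.i.d.\ from the mixture $\sum_y \aL_{xy}\nu_y$; the demand side is symmetric.

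Second, we reduce to the zero--one structure as in the single-type case. As in \cite{bordenave2013matchings}, the relevant extremal fixed points of $\Theta^2$ are supported on $\{0,\infty\}$ in the sense needed for exposure. One tracks only $t_y$, the probability that a type-$y$ demand message is ``$0$'' (or its complement), and the induced supply-side quantities. A two-step recursion then gives scalar fixed-point equations. For a supply node of type $x$, the probability that all children send the exposing message is $\phi$-type:
\[
s_x=\widehat\phi_x\Big(\sum_y\aL_{xy}(1-\cdots)\Big).
\]
For a demand node, $t_y=1-\widehat\psi_y(1-u_y)$-type relations hold, where $u_y=\sum_x\aR_{yx}s_x$. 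The root exposure probability becomes $\sum_x p_x\phi_x(\cdot)$ plus a demand term $\psi_y(1-t_y)+t_y\psi_y'(1-t_y)$. This gives exactly the expression $F$ in \eqref{eq:definition-of-F-general}, evaluated at the fixed point. The unimodularity relation is what allows the supply-side variables to be eliminated in favour of $t_y$.

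Third, we pass from ``$F$ at the largest fixed point'' to ``$\max F$ over $[0,1]^2$''. We compute $\partial_{t_y}F$. Using $\psi_y''$ and \eqref{eq:unimodularity-constraint}, it factors as $q_y t_y\psi_y''(1-t_y)$ times the difference between $t_y$ and its image under the composed two-step map. Hence stationary points are precisely the fixed points. We then argue, as in the single-type case, via monotonicity of the composed map and its coordinatewise structure, that the maximizer over the box is attained at the largest fixed point. The boundary cases are handled because the map sends $[0,1]^2$ into itself.

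We expect the main obstacle to be this last identification in two dimensions. In one dimension, sign changes of $t-g(t)$ directly order the critical values. In two dimensions, we must show that the global maximum of $F$ over the box sits at the extremal fixed point, not at a saddle or another fixed point. We would first try to exploit that the composed map is monotone (order-preserving) in $(t_1,t_2)$. We would then show that $F$ increases along the monotone iteration started from $(1,1)$, so that $\max F$ equals its value at the largest fixed point. If this fails, we would fall back on comparing $F$ with its value at the fixed point via a convexity argument on each coordinate, holding the other fixed.
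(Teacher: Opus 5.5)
Your first step matches the paper (Proposition~\ref{prop:BLS-11} via \cite{bordenave2013matchings}). Your derivative computation is essentially \eqref{eq:derivative-formula}, though the prefactor is $q_y\psi_y''(1-t_y)$, with no factor $t_y$. The genuine gap is in how you connect the BLS fixed point to $\max F$.

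The object in the BLS formula is the largest fixed point $\boldsymbol\mu^\star$ of the $[0,1]$-valued two-step operator $\Theta_{\boldsymbol{\pihatL},\boldsymbol{\pihatR}}$. This is the $z\downarrow0$ monomer--dimer recursion, with the ``$1+$''. Its positivity vector $t^\star$ is not determined by the scalar map $H$. At every finite stage $t(\Theta(\boldsymbol\mu))=H(t(\boldsymbol\mu))$ holds, but positive mass can drift to $0$ in the weak limit. For instance, take $\piL_x=\piR_y=\delta_d$ and iterate $\Theta$ from $(\delta_1,\delta_1)$. The messages are deterministic and equal $1/(k+1)$, so positivity is $1$ at every stage. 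Yet $\boldsymbol\mu^\star=\delta_0$ and $t^\star=0$, while the largest fixed point of $H(t)=(1-(1-t)^{d-1})^{d-1}$ is $1$.

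So the zero--one reduction in your second step does not produce the positivity of $\boldsymbol\mu^\star$. No argument purely about $H$ and $F$ on $[0,1]^2$ can identify $F(t^\star)$. Your plan for the third step also fails on its own terms: monotone increase of $F$ along $H$-iterates from $(1,1)$ only gives $F(\bar t)\ge F(1,1)$, not $F(\bar t)\ge F(t)$ for all $t$.

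The paper instead works at the level of measures, in these steps:
\begin{enumerate}
\item Take a maximizer $t^\dagger$ of $F$; it is a fixed point of $H$ by the derivative formula plus a boundary argument.
\item Iterate $\Theta$ from $(\mathrm{Bernoulli}(s_1^\dagger),\mathrm{Bernoulli}(s_2^\dagger))$ with $s_x^\dagger=\widehat\phi_x(r_x(t^\dagger))$. Positivity stays exactly $t^\dagger$, and the iterates decrease stochastically, since the Bernoulli law is the largest law with given positivity.
\item The comparison inequality $\mathcal R(\boldsymbol\nu)\ge F(t(\boldsymbol\nu))$ whenever $\Theta(\boldsymbol\nu)\preceq\boldsymbol\nu$ passes by continuity to a fixed point $\boldsymbol\mu^{(\infty)}$ with $\mathcal R(\boldsymbol\mu^{(\infty)})\ge\max F$.
\item Then $\boldsymbol\mu^\star\succeq\boldsymbol\mu^{(\infty)}$ and monotonicity of $\mathcal R$ give $F(t^\star)=\mathcal R(\boldsymbol\mu^\star)\ge\max F$.
\end{enumerate}

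Your second step also skips the computation that makes all of this work. The root term $\mathcal R(\widehat{\boldsymbol\mu})=\sum_x p_x\E[\Theta_{x;\boldsymbol{\piL},\boldsymbol{\pihatR}}(\widehat{\boldsymbol\mu})]$ is the mean of a $[0,1]$-valued variable. It is not the probability of an event in a zero--one recursion. The demand term $\psi_y(1-t_y)+t_y\psi_y'(1-t_y)$ does not come out of positivity bookkeeping. It is derived in these steps:
\begin{enumerate}
\item Write $\E[(1+Z_x)^{-1}]=\P(Z_x<\infty)-\E[\sum_i Y_{x,i}/(Y_{x,i}+S_i)\,\mathbf 1\{S_1,\dots,S_{N_x}>0\}]$.
\item Size-bias the second term and use the fixed-point property to recognize a $\widehat\mu_x$-sample.
\item Move to the demand side by unimodularity.
\item Apply the exchangeability identity $\E[X/(X+\sum_{i\le N}X_i)\,\mathbf 1\{\sum_{i\le N}X_i>0\}]=t\,\E[\mathbf 1\{N^\ast>0\}/(1+N^\ast)]$ with $N^\ast\sim\mathrm{Bin}(N,t)$. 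This integrates to $1-\psi_y(1-t_y)-t_y\psi_y'(1-t_y)$.
\end{enumerate}
Running the same computation for a non-fixed $\boldsymbol\nu$ with $\Theta(\boldsymbol\nu)\preceq\boldsymbol\nu$ is exactly what yields the comparison inequality above. This identity is therefore the key lemma your proposal is missing, for both the root formula and the variational identification.
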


By~\Cref{prop:BSBM-LWC}, $(G_n)$ converges locally weakly in probability to the tree $\mathcal{T}$ with
\[
    \piL_x = \mathrm{Pois}(\lambda_x), \quad
    \piR_y = \mathrm{Pois}(M_y), \quad
    \lambda_x \triangleq \sum_{y=1}^2 c_{xy}\,q_y, \quad
    M_y \triangleq \sum_{x=1}^2 c_{xy}\,p_x,
\]
and limiting neighbor-type mixing probabilities
$\aL_{xy}=c_{xy}q_y/\lambda_x$ and $\aR_{yx}=c_{xy}p_x/M_y$. We first identify the bounded-degree limit.

For a vertex reached through one already present edge, define the endpoint survival probabilities
\[
    \rho_x^{\mathrm{L},d} \triangleq \P(\Pois(\lambda_x)\le d-1),
    \qquad
    \rho_y^{\mathrm{R},d} \triangleq \P(\Pois(M_y)\le d-1).
\]
For a supply root of type $x$, set
\[
    \Lambda_x^{(d)} \triangleq \sum_{y=1}^2 c_{xy}q_y\rho_y^{\mathrm{R},d},
    \qquad
    \overline\Lambda_x^{(d)} \triangleq \lambda_x-\Lambda_x^{(d)}.
\]
Let $K_x^{\mathrm{L},d}\sim\Pois(\Lambda_x^{(d)})$ and
$U_x^{\mathrm{L},d}\sim\Pois(\overline\Lambda_x^{(d)})$ be independent, and define
\[
    D_x^{\mathrm{L},d}
    \triangleq
    K_x^{\mathrm{L},d}\,\mathbf{1}\{K_x^{\mathrm{L},d}+U_x^{\mathrm{L},d}\le d\}.
\]
Thus $K_x^{\mathrm{L},d}$ counts neighbors whose other endpoint survives the truncation, whereas $U_x^{\mathrm{L},d}$ counts neighbors killed because the other endpoint has original degree larger than $d$.  Let
\(
    \phi_x^{(d)}(s) \triangleq \E\big[s^{D_x^{\mathrm{L},d}}\big].
\)
Equivalently,
\[
    \phi_x^{(d)}(s)
    = \P(K_x^{\mathrm{L},d}+U_x^{\mathrm{L},d}>d)
      + \sum_{k=0}^{d} s^k\,
        \P(K_x^{\mathrm{L},d}=k)\,
        \P(U_x^{\mathrm{L},d}\le d-k).
\]
Similarly, for a demand root of type $y$, set
\[
    \Gamma_y^{(d)} \triangleq \sum_{x=1}^2 c_{xy}p_x\rho_x^{\mathrm{L},d},
    \qquad
    \overline\Gamma_y^{(d)} \triangleq M_y-\Gamma_y^{(d)},
\]
let $K_y^{\mathrm{R},d}\sim\Pois(\Gamma_y^{(d)})$ and
$U_y^{\mathrm{R},d}\sim\Pois(\overline\Gamma_y^{(d)})$ be independent, define
\[
    D_y^{\mathrm{R},d}
    \triangleq
    K_y^{\mathrm{R},d}\,\mathbf{1}\{K_y^{\mathrm{R},d}+U_y^{\mathrm{R},d}\le d\},
    \qquad
    \psi_y^{(d)}(s) \triangleq \E\big[s^{D_y^{\mathrm{R},d}}\big].
\]
The corresponding type-mixing probabilities are
\[
    a_{xy}^{\mathrm{L},d}
    \triangleq
    \frac{c_{xy}q_y\rho_y^{\mathrm{R},d}}{\Lambda_x^{(d)}},
    \qquad
    a_{yx}^{\mathrm{R},d}
    \triangleq
    \frac{c_{xy}p_x\rho_x^{\mathrm{L},d}}{\Gamma_y^{(d)}} \, .
\]
Conditional on the total retained degree, the retained neighbor types are multinomial with these probabilities, since the truncation depends only on the total original degree.  The means satisfy
\[
    (\phi_x^{(d)})'(1)=\Lambda_x^{(d)}\rho_x^{\mathrm{L},d},
    \qquad
    (\psi_y^{(d)})'(1)=\Gamma_y^{(d)}\rho_y^{\mathrm{R},d},
\]
and hence unimodularity holds, i.e.
\(
    p_x\,a_{xy}^{\mathrm{L},d}(\phi_x^{(d)})'(1)
    = p_xq_yc_{xy}\rho_x^{\mathrm{L},d}\rho_y^{\mathrm{R},d}
    = q_y\,a_{yx}^{\mathrm{R},d}(\psi_y^{(d)})'(1).
\)

For each fixed $d$, $(G_n^d)$ converges locally weakly in probability to the $2$-type unimodular Galton--Watson tree of Definition~\ref{def:UHGW-2type} with root PGFs $\phi_x^{(d)}$ and $\psi_y^{(d)}$ and mixing probabilities $a_{xy}^{\mathrm{L},d}$ and $a_{yx}^{\mathrm{R},d}$.  To see why the excess laws are the correct ones, consider for example a retained edge leading to a demand vertex of type $y$.  Conditional on this edge being retained, the other incident edges of that demand vertex are retained with total count distributed as
\[
    K_y^{\mathrm{R},d}
    \, \big|\,
    K_y^{\mathrm{R},d}+U_y^{\mathrm{R},d}\le d-1,
\]
whose PGF is exactly
\(
    (\psi_y^{(d)})'(s) / {(\psi_y^{(d)})'(1)}.
\)
The same argument applies on the supply side.  This verifies that the non-root offspring laws are the excess distributions required in Definition~\ref{def:UHGW-2type}. 
Since $G_n^d\in\mathcal G_d$, Theorem~\ref{thm:bounded-degree} therefore yields
\[
    \frac{\matchnum(G_n^d)}{n}
    \xrightarrow[]{~\mathrm{p.}~}
    a_d
    \triangleq
    1 - \max_{(t_1,t_2)\in[0,1]^2} F^{(d)}(t_1,t_2),
\]
where $F^{(d)}$ denotes the objective in~\eqref{eq:definition-of-F-general} instantiated with
$\phi_x^{(d)},\psi_y^{(d)},a_{xy}^{\mathrm{L},d}$.

It remains to pass $d\to\infty$.  Since $\rho_x^{\mathrm{L},d}\to 1$ and $\rho_y^{\mathrm{R},d}\to 1$, we have 
\[
    \Lambda_x^{(d)}\to\lambda_x\, , 
    ~~~~
    \Gamma_y^{(d)}\to M_y \, ,
    ~~~~ 
    a_{xy}^{\mathrm{L},d}\to c_{xy}q_y/\lambda_x \, ,
    ~~~~
    a_{yx}^{\mathrm{R},d}\to c_{xy}p_x/M_y \, .
\]
%
Thus, $\phi_x^{(d)}$ and $\psi_y^{(d)}$, together with their first two derivatives converge uniformly on $[0,1]$ to
$\exp(\lambda_x(s-1))$ and $\exp(M_y(s-1))$ respectively. Consequently, $F^{(d)}$ converges uniformly on $[0,1]^2$ to the limiting function $F^{\mathrm{lim}}$ obtained by instantiating~\eqref{eq:definition-of-F-general} with these Poisson PGFs and limiting mixing probabilities.  Hence,
\begin{align}\label{eq:Flim}
    \lim_{d\to\infty}
    \max_{(t_1,t_2)\in[0,1]^2} F^{(d)}(t_1,t_2)
    = \max_{(t_1,t_2)\in[0,1]^2} F^{\mathrm{lim}}(t_1,t_2).
\end{align}
Fix $\varepsilon>0$, and define
\[
    a \triangleq 1 - \max_{(t_1,t_2)\in[0,1]^2} F^{\mathrm{lim}}(t_1,t_2).
\]
By~\eqref{eq:Flim}, choose $d$ large enough that $|a_d-a|<\varepsilon/3$.  Then, by~\eqref{eq:boundgap},
\begin{align}
    \mathbb{P} \left(\left|\frac{ \matchnum(G_n) }{n} - a\right| > \varepsilon\right)
    &\le
    \mathbb{P} \left(\left|\frac{ \matchnum(G_n) }{n} - a_d\right| > \frac{2\varepsilon}{3}\right) \notag\\
    & \le
    \mathbb{P} \left(\left|\frac{ \matchnum(G_n) }{n} - \frac{\matchnum(G_n^d)}{n}\right| > \frac{\varepsilon}{3}\right)
    +
    \mathbb{P} \left(\left|\frac{ \matchnum(G_n^d) }{n} - a_d\right| > \frac{\varepsilon}{3}\right) \notag\\
    &\le
    \mathbb{P} \left(\frac{|\{v \in V_n : \deg(v) > d\}|}{n} > \frac{\varepsilon}{3}\right)
    +
    \mathbb{P} \left(\left|\frac{ \matchnum(G_n^d) }{n} - a_d\right| > \frac{\varepsilon}{3}\right).
    \label{eq:boundgap2}
\end{align}
The second term in~\eqref{eq:boundgap2} converges to $0$ as $n\to\infty$ by Theorem~\ref{thm:bounded-degree}.  For the first term, the quantity
\(
    {|\{v \in V_n : \deg(v) > d\}|}/{n}
\)
is a bounded local statistic, and therefore converges in probability to
\[
    \tau_d \triangleq \sum_{x=1}^2 p_x\,\mathbb{P}(\mathrm{Pois}(\lambda_x)>d)
    + \sum_{y=1}^2 q_y\,\mathbb{P}(\mathrm{Pois}(M_y)>d).
\]
Since all degree distributions are Poisson, $\tau_d \to 0$ as $d\to\infty$, so by enlarging $d$ if needed we may assume $\tau_d < \varepsilon/3$.  Hence the first term in~\eqref{eq:boundgap2} also converges to $0$, and therefore
\[
    \frac{ \matchnum(G_n) }{n}
    \xrightarrow[]{~\mathrm{p.}~}
    1 - \max_{(t_1,t_2)\in[0,1]^2} F^{\mathrm{lim}}(t_1,t_2).
\]
Finally, for Poisson PGFs,
\[
    \frac{\psi_y'(1-t_y)}{\psi_y'(1)}=e^{-M_y t_y},
    \qquad
    \psi_y(1-t_y)+t_y\psi_y'(1-t_y)=e^{-M_y t_y}(1+M_y t_y),
\]
and
\[
    \phi_x\left(\sum_{y=1}^2 \frac{c_{xy}q_y}{\lambda_x}(1-e^{-M_y t_y})\right)
    =
    \exp\left(-\sum_{y=1}^2 c_{xy}q_y e^{-M_y t_y}\right).
\]
Thus $F^{\mathrm{lim}}$ coincides with $F$ in Theorem~\ref{thm:matching-rate}, completing the proof.

\subsection{Proof of~\texorpdfstring{\Cref{thm:bounded-degree}}{}} \label{sec:bounded-degree}

We start by introducing the operator $\Theta$, which is used throughout the proof.

\begin{definition}[The operator $\Theta$]\label{def:Theta}
    Let $\boldsymbol{\mu} = (\mu_1, \mu_2) \in \mathcal{P}([0,1])^2$ be a pair of probability measures.
    Let $\boldsymbol{\Lambda} = (\Lambda_1, \Lambda_2)$ and $\boldsymbol{\Lambda}' = (\Lambda'_1, \Lambda'_2)$ be pairs of integer-valued random variables. 
    For each index $x \in \{1,2\}$, we define $\Theta_{x;\,\boldsymbol{\Lambda},\boldsymbol{\Lambda}'}(\boldsymbol{\mu})$ as the law of the random variable $Y_x$:
    \begin{equation}\label{eq:Thetadef}
        Y_x \, = \, \bigg( 1 + \sum_{i=1}^{N_x} \bigg( \sum_{j=1}^{N'_{i}} X_{ij} \bigg)^{-1} \, \bigg)^{-1},
    \end{equation}
    where the variables are constructed according to the following hierarchical process:
    \begin{itemize}
        \item $N_x$ is a random variable distributed according to $\Lambda_x$.
        \item For each $i \in \{1, \dots, N_x\}$, the variable $N'_i$ is independently drawn from a mixture distribution where an intermediate index $y \in \{1,2\}$ is selected with probability $\aL_{xy}$, and conditional on $y$, the variable $N'_i$ is independently distributed as $\Lambda'_y$.

        \item For each $j \in \{1, \dots, N'_i\}$, the term $X_{ij}$ is independently drawn from the mixture distribution $\sum_{x'} \aR_{yx'} \, \mu_{x'}$.
    \end{itemize}

    The operator $\Theta$ is the mapping on $\mathcal{P}([0,1])^2$ given by
    \[
        \Theta_{\boldsymbol{\Lambda},\boldsymbol{\Lambda}'}(\boldsymbol{\mu})
        \,\triangleq\,
        \left(
            \Theta_{1;\,\boldsymbol{\Lambda},\boldsymbol{\Lambda}'}(\boldsymbol{\mu}),\,
            \Theta_{2;\,\boldsymbol{\Lambda},\boldsymbol{\Lambda}'}(\boldsymbol{\mu})
        \right).
    \]
\end{definition}

We note that $\Theta_{\boldsymbol{\Lambda},\boldsymbol{\Lambda}'}$ also depends on the type-mixing matrices $[\aL_{xy}]$ and $[\aR_{yx}]$ from Definition~\ref{def:UHGW-2type}; this dependence is suppressed in the notation as these quantities are fixed throughout.

\begin{proposition}\label{prop:BLS-11}
    Consider a sequence of finite graphs $G_n^d \in \mathcal{G}_d$ converging locally weakly in probability to the tree $T$  of~\Cref{thm:bounded-degree}. Then,
    \begin{align} \label{eq:FPE-to-matching-fraction}
       \frac{\matchnum(G_n^d)}{n}  \, \xrightarrow[]{~\mathrm{p.}~} \, 1 - \mathbb{E} \left[\mathcal{R}_{\star}\right]
        ~~~~ \mbox{ as } n \to \infty \, ,
    \end{align}
    where $\mathcal{R}_{\star}$ is drawn from the mixture $\sum_{x}p_x \, \mu_x$ with $\boldsymbol{\mu} = \Theta_{\boldsymbol{\piL},\boldsymbol{ \pihatR }}(\boldsymbol{ \mu^\star})$, and $\boldsymbol{\mu^\star}$ is a largest solution to the random recursive distributional equation $\boldsymbol{{\mu^\star}}= \Theta_{\boldsymbol{ \pihatL }, \boldsymbol{ \pihatR}}(\boldsymbol{{\mu^\star}})$\footnote{The ``largest" solution is with respect to stochastic dominance on the space of probability measures $\mathcal{P}([0,1])^2$. While this space does not have a total order in general, this result guarantees the existence of a greatest solution. }.
\end{proposition}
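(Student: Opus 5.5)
We will follow the route of \cite[Theorem~11]{bordenave2013matchings}, which rests on monotonicity and two-step iteration of the operator $\Theta$. The plan has four stages. First, define a finite-graph quantity whose average encodes the matching number. Second, bound that quantity using fixed-point iterates. Third, pass to the local weak limit. Fourth, use the extremal fixed points of $\Theta$ to close the bounds.

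\textbf{Step 1 (finite graphs).} For a finite graph $G\in\mathcal G_d$ and $z>0$, we introduce the monomer--dimer measure with activity $z$ and the associated root-exposure probability $r_G^z(v)$. By Heilmann--Lieb, this quantity satisfies the exact recursion
\[
r^z_G(v)=\Big(1+z\sum_{u\sim v} r^z_{G-v}(u)\Big)^{-1},
\]
and as $z\to\infty$ it gives
\[
\frac1n\sum_v r^z_{G}(v)\to 1-\frac{2\matchnum(G)}{|V(G)|}.
\]
This is appropriately normalized, since $|V(G_n^d)|=2n$. Iterating the recursion twice yields the two-level form \eqref{eq:Thetadef} in the limit $z\to\infty$. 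This is why $\Theta$ alternates supply and demand offspring laws.

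\textbf{Step 2 (monotonicity of $\Theta$).} The map $\boldsymbol\mu\mapsto\Theta(\boldsymbol\mu)$ is nondecreasing for stochastic order, coordinatewise on $\mathcal P([0,1])^2$. This holds because the composition $x\mapsto(1+\sum(\sum x)^{-1})^{-1}$ is nondecreasing in each $X_{ij}$. Starting from $\boldsymbol\delta_1$, the iterates $\Theta^k(\boldsymbol\delta_1)$ are therefore stochastically nonincreasing. Their limit is the largest fixed point $\boldsymbol\mu^\star$, which gives existence as claimed in the footnote. By the same argument, $\Theta^k(\boldsymbol\delta_0)$ increases to the smallest fixed point.

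\textbf{Step 3 (local limit and upper bound).} On the finite graph, the $z=\infty$ recursion run from all-ones at depth $2k$ gives an upper bound on exposure at the root. This corresponds to a lower bound on $\matchnum$. Because $G_n^d$ has bounded degree and converges locally weakly in probability to $T$, the empirical law of these depth-$2k$ local functions converges in probability. The limit is the law of the root computed on the unimodular tree $T$. In that computation, the root uses $\piL_x$ (resp.\ $\piR_y$) and non-root vertices use the excess laws $\pihatL,\pihatR$, with types mixed by $\aL,\aR$. For a supply root, the result is exactly $\Theta_{\boldsymbol\piL,\boldsymbol\pihatR}(\Theta^{k}_{\boldsymbol\pihatL,\boldsymbol\pihatR}(\boldsymbol\delta_1))$. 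Demand roots are handled symmetrically. Unimodularity \eqref{eq:unimodularity-constraint} lets us express the averaged exposure over all vertices through the supply-side mixture $\sum_x p_x\mu_x$. It also ensures that the contributions from both sides combine into $\E[\mathcal R_\star]$ with normalization by $n$. Letting $k\to\infty$ and using continuity of $\Theta$ under monotone limits, we obtain
\[
\liminf \matchnum/n\ge 1-\E[\mathcal R_\star]-\varepsilon \quad\text{w.h.p.}
\]

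\textbf{Step 4 (matching upper bound; main obstacle).} This is the hard step: a naive bound would give the smallest fixed point instead. Following \cite{bordenave2013matchings}, we will work at finite $z$, where the fixed point of the $z$-recursion on $T$ is unique because the map is a contraction for bounded degree. The monomer--dimer partition function identity gives
\[
\frac{2\matchnum}{|V|}\le 1-\frac1n\sum_v r^z(v)+O(1/\log z)\text{-type errors}.
\]
We then let $z\to\infty$ and identify the limit of the unique finite-$z$ fixed points with the largest fixed point $\boldsymbol\mu^\star$. This identification will use monotonicity in $z$ together with the fact that $\boldsymbol\delta_1$-iterates dominate every fixed point. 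The difficulty is the interchange of the limits $n\to\infty$ and $z\to\infty$. We will control it using the uniform-in-$n$ estimate from \cite[Section~3]{bordenave2013matchings}, which bounds the effect of activity $z$ on the matching number by the average $\log$-partition-function slope. We will check that this estimate only uses bounded degree and not single-typeness, so it transfers to our setting verbatim.
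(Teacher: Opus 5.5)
Your proposal is correct and is essentially the paper's route: the paper proves this by directly invoking \cite[Theorems~10 and~11]{bordenave2013matchings}, and your four steps unpack exactly that argument (Heilmann--Lieb recursion, monotone iteration of $\Theta$ from $\boldsymbol\delta_1$, uniqueness at finite activity $z$ via contraction, and the uniform-in-$n$ $O(1/\log z)$ bound to interchange $n\to\infty$ and $z\to\infty$), which indeed needs only bounded degree and not single-typeness. One small fix: average over supply vertices only. Every matching edge covers exactly one supply vertex, so $\frac{1}{n}\sum_{v \text{ supply}} r^z_G(v) = 1-\mathbb{E}_z|M|/n$ exactly; this repairs the factor-$2$ slip in your Step~1 normalization and makes the appeal to unimodularity in Step~3 unnecessary.
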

 \begin{proof}
    The proof follows from \cite[Theorems 10 and 11]{bordenave2013matchings}. 
\end{proof}

By~\Cref{prop:BLS-11}, the rest of the proof  is dedicated to obtaining a precise characterization of  $\mu^{\star}$. To this end, for any  $(\mu_1,\mu_2) \in \mathcal{P}([0,1])^2$, 
let $(t_1, t_2)$ denote its induced positivity vector where 
\[
    t_y \triangleq \sum_{x=1}^{2} \aR_{yx}\, \mu_x\left( (0,1] \right) \, ,
\]
for $y \in \{1,2\}$. 
We first present the following lemma that establishes a relationship between the fixed points of $\Theta_{\boldsymbol{\pihatL}, \boldsymbol{\pihatR}}$ and the function $F$ defined in~\eqref{eq:definition-of-F-general}.

\begin{lemma}\label{lmm:root-rep-formula}
    Let $\boldsymbol{\widehat{\mu}} \triangleq (\widehat\mu_1, \widehat\mu_2)$ be any fixed point of $\Theta_{\boldsymbol{ \pihatL},\boldsymbol{ \pihatR }}$, with induced positivity vector $(\hat t_1, \hat t_2)$. Then
    \[
        \sum_{x=1}^2 p_x  \, \mathbb{E}\left[\Theta_{x; \boldsymbol{ \piL},\boldsymbol{ \pihatR }}(\boldsymbol{\widehat{\mu}})\right] =F(\hat t_1, \hat t_2)
    \]
    where $F:[0,1]^2\to\mathbb{R}$ is defined in~\eqref{eq:definition-of-F-general}.
\end{lemma}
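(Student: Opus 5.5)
The plan follows the single-type computation of~\cite{bordenave2013matchings}. There are three steps: split the root value into a ``no zero child'' probability minus an edge term, transport the edge term to the demand side using the unimodularity constraint~\eqref{eq:unimodularity-constraint}, and evaluate the resulting demand-side quantity with PGF identities.

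\emph{Step 1: the first term of $F$.} Fix $x$ and write $Y_x=(1+\sum_{i=1}^{N_x} Z_i^{-1})^{-1}$ with $N_x\sim\piL_x$ and $Z_i=\sum_{j=1}^{N_i'}X_{ij}$. Here $N_i'\sim\pihatR_y$ once the intermediate type $y$ is chosen with probability $\aL_{xy}$, and the $X_{ij}$ are i.i.d.\ from $\sum_{x'}\aR_{yx'}\widehat\mu_{x'}$. Two observations drive everything.
\begin{itemize}
\item With the convention $0^{-1}=\infty$, we have $Y_x>0$ if and only if $Z_i>0$ for every $i$.
\item On the event $\{Y_x>0\}$ the identity $Y_x=1-\sum_i Y_x Z_i^{-1}$ holds.
\end{itemize}
Hence
\[
\E[Y_x]=\P(\text{all } Z_i>0)-\E\Big[\sum_i \mathbf 1\{Z_i>0\}\,Y_xZ_i^{-1}\Big].
\]
Given its type $y$, the sum $Z_i$ is positive unless all $N_i'$ of its summands vanish, which has probability $\widehat\psi_y(1-\hat t_y)=\psi_y'(1-\hat t_y)/\psi_y'(1)$. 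Independence over $i$ then gives
\[
\P(\text{all } Z_i>0)=\phi_x\Big(\sum_y \aL_{xy}\big(1-\psi_y'(1-\hat t_y)/\psi_y'(1)\big)\Big).
\]
Summing with weights $p_x$ produces exactly the first sum in~\eqref{eq:definition-of-F-general}.

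\emph{Step 2: moving the edge term to the demand side.} The remaining task is to show
\[
\sum_x p_x\,\E\Big[\sum_i \mathbf 1\{Z_i>0\}Y_xZ_i^{-1}\Big]=\sum_y q_y\big(1-\psi_y(1-\hat t_y)-\hat t_y\psi_y'(1-\hat t_y)\big).
\]
The right-hand side is the probability that a demand root of type $y$, carrying $\psi_y$-many i.i.d.\ messages of type-mixture $\aR_{y\cdot}$, has at least two positive incoming messages. I will interpret each summand as a function of a directed edge (supply $u$ of type $x$, demand $v$ of type $y$). In that interpretation $Y_x$ is the message computed at $u$ from all its demand neighbours, and $Z_i$ is the sum of messages into $v$ from its supply neighbours other than $u$. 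Expanding $\E[\sum_i\cdot]$ by the number of type-$y$ neighbours of a type-$x$ supply root contributes a factor $p_x\aL_{xy}\phi_x'(1)$. By~\eqref{eq:unimodularity-constraint} this equals $q_y\aR_{yx}\psi_y'(1)$, which converts the sum into an expectation over a demand root of type $y$ with offspring $\psi_y$, summed over its supply neighbours $u$. I also need the message $Y_{u\to v}$ from $u$ to $v$ to be distributed as $\widehat\mu_x$; this is where the fixed-point property of $\boldsymbol{\widehat\mu}$ under $\Theta_{\boldsymbol{\pihatL},\boldsymbol{\pihatR}}$ enters, because the remaining neighbours of $u$ follow the excess laws.

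\emph{Step 3: the per-demand-root computation.} For a demand root $v$ with incoming messages $X_1,\dots,X_k$, the contribution becomes $\sum_u \mathbf 1\{S_{-u}>0\}\,Y_u/S_{-u}$, where $S_{-u}=\sum_{w\neq u}X_w$ and $Y_u=(1+1/S_{-u}+R_u)^{-1}$. Here $R_u$ collects the terms $Z^{-1}$ from the other demand neighbours of $u$, so that $(1+R_u)^{-1}=X_u$. This simplifies to
\[
\frac{Y_u}{S_{-u}}=\frac{X_u}{S_{-u}+X_u}=\frac{X_u}{S},\qquad S=\sum_w X_w .
\]
Summing over $u$ with the indicator gives
\[
\sum_u \mathbf 1\{S_{-u}>0\}\frac{X_u}{S}=\mathbf 1\{\text{at least two } X_w>0\},
\]
because if only one $X_u$ is positive then its own $S_{-u}$ is zero, and if at least two are positive every indicator attached to a positive $X_u$ equals one. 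The number of positive messages is a $\psi_y$-compound with success probability $\hat t_y$. So $\P(\text{at least two positive})=1-\psi_y(1-\hat t_y)-\hat t_y\psi_y'(1-\hat t_y)$. Using $\sum_y q_y=1$, this yields the second sum in~\eqref{eq:definition-of-F-general} together with the $-1$.

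The main obstacle is Step 2. It requires bookkeeping the joint law of the pair (root message, sibling sum) so that after the edge swap the messages into $v$ are genuinely i.i.d.\ from $\sum_{x'}\aR_{yx'}\widehat\mu_{x'}$ and the reconstruction $(1+R_u)^{-1}=X_u$ is legitimate in distribution. I would handle this by working with the explicit hierarchical tree of Definition~\ref{def:UHGW-2type} truncated at a finite depth and applying the excess-PGF identity~\eqref{eq:pgf-hats} type by type.
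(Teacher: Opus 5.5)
Your argument is correct. Steps~1 and~2 match the paper's argument in substance. You use the same split into $\mathbb{P}(\text{all } Z_i>0)=\phi_x(r_x)$ minus an edge term. You use the same size-biasing in $N_x$ together with the fixed-point property: after size-biasing, the leave-one-out message at the root has a $\pihatL_x$-distributed number of children, hence law $\widehat\mu_x$. And you use~\eqref{eq:unimodularity-constraint} in the same way.

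Where you genuinely differ is Step~3, the evaluation of $\psi_y'(1)\,\mathbb{E}\bigl[X/(X+S^y)\,\mathbf 1\{S^y>0\}\bigr]$ (Lemma~\ref{clm:formula-for-Ky} in the paper).
\begin{itemize}
\item The paper stays with the excess law. It conditions on positivity and uses exchangeability to reduce to $\hat t_y\,\mathbb{E}\bigl[\mathbf 1\{N^\star>0\}/(1+N^\star)\bigr]$, where $N^\star$ is a $\hat t_y$-thinning of a $\pihatR_y$ variable. It then evaluates this via $\hat t_y^{\,d+1}/(d+1)=\int_0^{\hat t_y}s^d\,\mathrm{d}s$ and integration of the PGF.
\item You undo the size-bias instead, passing to a full demand root with $K\sim\piR_y$ i.i.d.\ messages. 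This is legitimate because, after mixing over $x$ with weights $\aR_{yx}$, the distinguished message and the summands of $S^y$ share the law $\sum_{x'}\aR_{yx'}\widehat\mu_{x'}$. You then note that $\sum_u\mathbf 1\{S_{-u}>0\}X_u/S$ is exactly the indicator that at least two messages are positive. The formula then follows by counting the cases of zero and one positive message.
\end{itemize}
Your route is shorter and gives the second half of $F$ a transparent meaning. The paper's route is a self-contained calculation that never leaves the excess-degree variables.

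The bookkeeping you flag as the main obstacle in Step~2 needs no tree and no depth truncation. Apply your Step~3 identity on the supply side first, as the paper does: write $Y_xZ_i^{-1}=W_i/(W_i+Z_i)$ with $W_i\triangleq\bigl(1+\sum_{i'\neq i}Z_{i'}^{-1}\bigr)^{-1}$. The summand is then a function of the independent pair $(W_i,Z_i)$. Condition on $N_x=k$ and use exchangeability together with $k\,\piL_x(k)/\phi_x'(1)=\pihatL_x(k-1)$. This gives $\phi_x'(1)\sum_y\aL_{xy}\,\mathbb{E}\bigl[X_x/(X_x+S^y)\mathbf 1\{S^y>0\}\bigr]$ with $X_x\sim\widehat\mu_x$ independent of $S^y$. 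Your reverse size-bias on the demand side is the same identity read backwards. Every step is a statement about the random variables of Definition~\ref{def:Theta}, so no joint law of messages on a tree ever has to be controlled.
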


Next, we identify the positivity vector of the relevant fixed point as the maximizer of $F$.

\begin{lemma} \label{lmm:variational-characterization-new}
    The induced positivity vector $(t_1^{\star},t_2^{\star})$ of the largest fixed point $\boldsymbol{\mu}^\star$ of  $\Theta_{\boldsymbol{ \pihatL },\boldsymbol{ \pihatR }}$ satisfies
    \[
        F(t_1^{\star}, t_2^{\star}) = \max_{t_1,t_2 \in [0,1]} \, F(t_1, t_2) \, ,
    \]
    where $F:[0,1]^2\to\mathbb{R}$ is defined in~\eqref{eq:definition-of-F-general}.
\end{lemma}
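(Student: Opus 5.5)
We would follow the single-type argument of \cite{bordenave2013matchings} (their Theorem~13 / Section~3), carrying two types through each step. The plan is to reduce the distributional fixed-point problem to a finite-dimensional fixed-point problem for the positivity vector, and then match that problem to the critical points of $F$ and pick out the maximizer.

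\emph{Step 1 (reduction to positivity).} From \eqref{eq:Thetadef}, $Y_x>0$ if and only if every inner sum $\sum_j X_{ij}$ is positive, that is, for every $i$ some $X_{ij}>0$. Hence the positivity vector of $\Theta(\boldsymbol\mu)$ depends on $\boldsymbol\mu$ only through $(t_1,t_2)$. Composing the excess PGFs gives
\[
\mu'_x((0,1]) \;=\; \widehat\phi_x\Big(\sum_y \aL_{xy}\big(1-\widehat\psi_y(1-t_y)\big)\Big).
\]
Averaging with weights $\aR_{yx}$ yields a monotone map $g:[0,1]^2\to[0,1]^2$ with
\[
g_y(t)=\sum_x \aR_{yx}\,\widehat\phi_x\Big(\sum_{y'}\aL_{xy'}\big(1-\widehat\psi_{y'}(1-t_{y'})\big)\Big).
\]
Every fixed point of $\Theta_{\boldsymbol\pihatL,\boldsymbol\pihatR}$ therefore has a positivity vector that is a fixed point of $g$.

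\emph{Step 2 (fixed points of $g$ are critical points of $F$).} Differentiate \eqref{eq:definition-of-F-general} in $t_y$. The demand terms contribute $q_y\,t_y\,\psi_y''(1-t_y)$. The supply terms contribute
\[
-\sum_x p_x\,\phi_x'(\cdot)\,\aL_{xy}\,\frac{\psi_y''(1-t_y)}{\psi_y'(1)}.
\]
Writing $\phi_x'=\phi_x'(1)\widehat\phi_x$ and using the unimodularity constraint \eqref{eq:unimodularity-constraint}, so that $p_x\aL_{xy}\phi_x'(1)=q_y\aR_{yx}\psi_y'(1)$, we get
\[
\partial_{t_y}F \;=\; q_y\,\psi_y''(1-t_y)\,\big(t_y-g_y(t)\big).
\]
So interior stationary points, and more generally the sign structure of $\nabla F$, are governed by $t-g(t)$. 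In particular $F$ increases in $t_y$ while $t_y<g_y(t)$ and decreases while $t_y>g_y(t)$, wherever $\psi_y''>0$; in the degenerate case $\psi_y''(1-t_y)=0$, $F$ does not depend on $t_y$ locally.

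\emph{Step 3 (identifying the maximizer with the largest fixed point).} By monotonicity of $g$ and Knaster--Tarski, $g$ has a largest fixed point $\bar t$, obtained by iterating from $(1,1)$. Iterating $\Theta$ from $(\delta_1,\delta_1)$ converges to $\boldsymbol\mu^\star$. Following \cite{bordenave2013matchings}, we show that the positivity vector $t^\star$ of $\boldsymbol\mu^\star$ is not merely some fixed point of $g$. Rather, $\boldsymbol\mu^\star$ is supported on a two-point structure built from the largest and smallest fixed points of $g\circ g$, and its values are determined by an alternating (bipartite) iteration. It remains to show that $F(t^\star)\ge F(t)$ for every $t\in[0,1]^2$. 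We would take a global maximizer $\tilde t$ of the continuous function $F$ on the compact square. By the sign identity of Step 2, $\tilde t$ satisfies the fixed-point conditions of $g$ coordinatewise, with boundary cases handled by the same sign argument. Then $F(t^\star)\ge F(\tilde t)$ follows by comparing along the monotone path from $\tilde t$ up to $t^\star$: if we write $F$ as a potential along coordinate lines, $F$ is nondecreasing along $g$-iteration started above any fixed point.

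\emph{Main obstacle.} The main difficulty is exactly this last comparison. In the single-type case it is a one-variable calculus argument. With two coordinates, $g$ is monotone but $F$ is not a gradient-like Lyapunov function in general, since the factor $q_y\psi_y''$ differs across $y$. The first approach we would try is to show that along the coordinatewise-decreasing iteration $t^{(k+1)}=g(t^{(k)})$, started from $(1,1)$, $F$ is nondecreasing. Each single-coordinate move toward $g_y$ increases $F$ by the one-dimensional sign property, and a Gauss--Seidel update order makes this rigorous. The limit $t^\star$ then dominates every fixed point, including every maximizer, coordinatewise. Since the same monotone path passes from any $\tilde t$ into the basin of $t^\star$, this yields $F(t^\star)=\max F$.
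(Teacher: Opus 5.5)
Step~3 does not prove the lemma. (Step~2 also has a sign slip. The correct identity is $\partial_{t_y}F=q_y\psi_y''(1-t_y)\,(g_y(t)-t_y)$: the supply contribution is positive and the demand contribution negative. Your verbal conclusion already uses this correct sign.)

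Monotonicity of $F$ along the decreasing iteration $t^{(k+1)}=g(t^{(k)})$ from $(1,1)$ only gives $F(\bar t)\ge F(t^{(k)})$ for points on that path. All of these lie coordinatewise above $\bar t$, so this says nothing about a maximizer $\tilde t\le\bar t$. Such a $\tilde t$ is itself a fixed point of $g$, hence stationary, so there is no monotone path ``from $\tilde t$ into the basin'' of $\bar t$. Coordinatewise domination also does not compare $F$-values, since $F$ can dip between fixed points (for a single Poisson type with $c>e$, the middle fixed point is a local minimum). Nothing in the finite-dimensional picture forces the largest fixed point of $g$ to be a global maximizer of $F$.

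There is a second, independent gap. You identify the positivity vector $t^\star$ of $\boldsymbol\mu^\star=\lim_k\Theta^k(\delta_1,\delta_1)$ with $\bar t=\lim_k g^k(1,1)$. But positivity is not weakly continuous: positive mass of the iterates can drift to $0$, and the portmanteau theorem only gives $t^\star\le\bar t$. Separately, the assertion that $\boldsymbol\mu^\star$ is a two-point law built from fixed points of $g\circ g$ is a misreading. Here $\Theta$ is already the two-step supply--demand--supply operator, and its fixed points are general laws on $[0,1]$.

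The missing idea is to compare at the level of distributions, which avoids ever deciding which fixed point of $g$ is $t^\star$. The argument runs as follows.
\begin{enumerate}
\item Take a maximizer $\tmax$ of $F$ that is a fixed point of $g$. This follows from your Step~2 and the first-order conditions, with a separate perturbation argument where $\psi_y''(1-t_y)=0$.
\item Start $\Theta_{\boldsymbol{\pihatL},\boldsymbol{\pihatR}}$ from Bernoulli laws with masses $\smax_x=\widehat\phi_x(r_x(\tmax))$. By your Step~1, the positivity vector stays exactly $\tmax$ along the iterates.
\item A Bernoulli law is stochastically maximal among laws with a given mass on $(0,1]$. Hence $\boldsymbol\mu^{(1)}\preceq\boldsymbol\mu^{(0)}$, and the iterates decrease to a fixed point $\boldsymbol\mu^{(\infty)}$.
\item The key lemma is the comparison inequality \eqref{eq:comparison-ineq}: if $\Theta(\boldsymbol\nu)\preceq\boldsymbol\nu$, then $\mathcal R(\boldsymbol\nu)\ge F(t(\boldsymbol\nu))$. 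It is obtained by rerunning the decomposition of \Cref{lmm:root-rep-formula} and replacing the stochastically smaller variable $\widetilde X_y$ by $X_y$, which can only enlarge the subtracted term.
\item This gives $\mathcal R(\boldsymbol\mu^{(k)})\ge F(\tmax)$ for all $k$, hence $\mathcal R(\boldsymbol\mu^{(\infty)})\ge F(\tmax)$ by weak continuity.
\item Finally, $F(t^\star)=\mathcal R(\boldsymbol\mu^\star)\ge\mathcal R(\boldsymbol\mu^{(\infty)})\ge F(\tmax)=\max F$. This uses \Cref{lmm:root-rep-formula}, maximality of $\boldsymbol\mu^\star$, and monotonicity of $\mathcal R$.
\end{enumerate}
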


Putting it all together, we have that the matching rate is given by
\begin{align} \label{eq:nu-at-t-star}
    \lim_{n\rightarrow \infty}\frac{\matchnum(G_n)}{n} \stepa{={}} 1 - \sum_{x=1}^2 p_x  \, \mathbb{E}\left[\Theta_{x; \boldsymbol{ \piL},\boldsymbol{ \pihatR }}(\boldsymbol{\mu}^{\star})\right]
     \stepb{=}  
    1 - F(t_1^{\star}, t_2^{\star})  
     \stepc{=}  
    1 - \max_{t_1,t_2} F(t_1,t_2)\, ,
\end{align}
where (a) uses~\Cref{prop:BLS-11}, (b) uses~\Cref{lmm:root-rep-formula} and (c) uses~\Cref{lmm:variational-characterization-new}. This concludes the proof. We are left to prove \prettyref{lmm:root-rep-formula} and \prettyref{lmm:variational-characterization-new}.

\subsubsection{Proof of~\texorpdfstring{\Cref{lmm:root-rep-formula}}{}}

Consider any type $x \in \{1,2\}$ for a root supply node, and a distribution $\boldsymbol{\widehat{\mu}} = (\widehat\mu_1,\widehat\mu_2)$. Define the random variable $Z_x$ as 
\begin{align} \label{eq:Z_x}
    Z_x\triangleq \sum_{i=1}^{N_x} \frac{1}{S_i}, ~~~~ \mbox{ where }  S_i = \sum_{j=1}^{N_i^{\mathrm{R}}}X_{ij}
\end{align}
and the random variables $N_x$, $N_i^{\mathrm{R}}$ and $X_{ij}$ are obtained as follows. First, $N_x \sim \piL_x$ is the number of demand offspring of the root supply node. Next, for each demand offspring $i$, its type is independently chosen as $y$ with probability $\aL_{xy}$. Conditional on $y$, its offspring count is chosen as $N_i^{\mathrm R} \sim \pihatR_y$ and each $X_{ij}$ is independently drawn from the mixture $\sum_x \aR_{yx} \, \widehat{\mu}_x$. Comparing against~\Cref{def:Theta} with $\boldsymbol{\Lambda} \equiv \boldsymbol{\piL}$ and $\boldsymbol{\Lambda}' \equiv \boldsymbol{\pihatR}$ with the conventions $0^{-1} = \infty$ and $\infty^{-1} = 0$, we have
\[
        \Theta_{x; \boldsymbol{\piL}, \boldsymbol{\pihatR}}(\boldsymbol{\widehat{\mu}}) \, \stackrel{\mathrm{d.}}{=} \, \frac{1}{1+Z_x} \, .
\]
Using the identity 
\[
    (1+Z)^{-1}  = 
    \begin{cases}
        1 - Z/(1+Z), & Z<\infty \\
        0, & Z = \infty
    \end{cases} 
\]
along with the definition of $Z_x$, we have
\begin{align} 
    \E\left[ \Theta_{x; \boldsymbol{\piL}, \boldsymbol{\pihatR}} (\boldsymbol{\widehat{\mu}})\right] 
    &= 
    \E\left[ \frac{1}{1+Z_x}\right] 
    =
    \E \left[ \left( 1 - \frac{Z_x}{1+Z_x} \right) \cdot  \mathbf{1}_{ \{ Z_x < \infty \} } \,+\, 0 \cdot \mathbf{1}_{ \{ Z_x = \infty\} } \right]
    \nonumber \\[3pt]
    & =
    \P(Z_x < \infty) - \E\left[ \sum_{i=1}^{N_x} \frac{S_i^{-1}}{1+Z_x} \cdot \mathbf{1}_{ \{ Z_x < \infty \} }\right] \, . \label{eq:two-term-decomposition}
\end{align}
where the equality holds by setting $Z_x = \sum_{i=1}^{N_x} S_i^{-1}$ in the numerator. We now rewrite 
\begin{align} 
    \frac{S_i^{-1}}{1+Z_x} 
    & =  \frac{S_i^{-1}}{1+S_i^{-1}+\sum_{i'=1, i'\neq i}^{N_x} S_{i'}^{-1}} 
    = \frac{1}{1+S_i\left[1+\sum_{i'=1, i'\neq i}^{N_x} S_{i'}^{-1} \right]}  = \frac{Y_{x,i}}{Y_{x,i}+S_i}\, , 
    \label{eq:S_i_inverse_Z_x}
\end{align}%
where the last equality holds by defining 
\[
    Y_{x,i} \triangleq\frac{1}{1+\sum_{i'=1, i'\neq i}^{N_x} S_{i'}^{-1}} \,.
\]
Substituting~\eqref{eq:S_i_inverse_Z_x} in~\eqref{eq:two-term-decomposition}, we obtain
\begin{align*} 
    \E\left[ \Theta_{x; \boldsymbol{\piL}, \boldsymbol{\pihatR}}(\boldsymbol{\widehat{\mu}}) \right] 
    & =
    \P(Z_x < \infty) - \E\left[ \sum_{i=1}^{N_x} \frac{Y_{x,i}}{Y_{x,i}+S_i}\cdot \mathbf{1}_{ \{ Z_x < \infty \} }\right] \, .
\end{align*}
Finally, we have from~\eqref{eq:Z_x} that $Z_x < \infty$ iff $S_i>0$ for all $i \leq N_x$. Therefore,
\begin{align} \label{eq:ThetaxwithZ}
    \E\left[ \Theta_{x; \boldsymbol{\piL}, \boldsymbol{\pihatR}}(\boldsymbol{\widehat{\mu}})\right] 
    & =
    \P(Z_x < \infty) - \E\left[ \sum_{i=1}^{N_x} \frac{Y_{x,i}}{Y_{x,i}+S_i}\cdot \mathbf{1}_{ \{ S_{1}, \ldots, S_{N_x}>0  \} }\right] \, .
\end{align}

We now present the following lemma about each of the two terms in~\eqref{eq:ThetaxwithZ}. In what follows, $S^{y}$ is the random variable $S_1$ conditioned on its type being $y$.

\begin{lemma} \label{lmm:Term-1-2}
    Let $X_x \sim \widehat\mu_x$ and $r_x \triangleq \sum_{y=1}^2 \aL_{xy} \left( 1 - \psihat_y(1-t_y)\right)$. Then,
    \begin{align} 
        \P(Z_x < \infty) &= \phi_x(r_x) \, \label{eq:Term-1}
        \\
        \E\left[  \sum_{i=1}^{N_x}  \frac{Y_{x,i}}{Y_{x,i} + S_i} \cdot \mathbf{1}_{\{ S_{1}, \ldots, S_{N_x} > 0 \}}\right]  &= \phi'_x(1)  \sum_{y=1}^2 \aL_{xy}\, \E \left[ \frac{X_x}{X_x+S^{y}} \cdot \mathbf{1}_{\{S^{y}>0\}} \right] \, .
        \label{eq:Term-2}
    \end{align}
\end{lemma}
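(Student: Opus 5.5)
For \eqref{eq:Term-1}, the plan is a direct conditioning computation. We have $Z_x<\infty$ iff every $S_i>0$, i.e.\ iff every demand child $i$ of the root has at least one $X_{ij}>0$. Conditional on child $i$ having type $y$, its $N_i^{\mathrm R}\sim\pihatR_y$ summands are i.i.d.\ from $\sum_{x'}\aR_{yx'}\widehat\mu_{x'}$. Each summand is positive with probability $t_y$ by the definition of the positivity vector. Hence $\P(S^y=0)=\E[(1-t_y)^{N^{\mathrm R}}]=\psihat_y(1-t_y)$, and averaging over the type gives $\P(S_i>0)=\sum_y\aL_{xy}(1-\psihat_y(1-t_y))=r_x$. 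The $S_i$ are i.i.d.\ given $N_x\sim\piL_x$, so $\P(Z_x<\infty)=\E[r_x^{N_x}]=\phi_x(r_x)$.

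For \eqref{eq:Term-2}, I would first write the indicator as $\mathbf 1\{S_i>0\}\cdot\mathbf 1\{S_{i'}>0\ \forall i'\neq i\}$. On the event that some $S_{i'}=0$ with $i'\ne i$, we have $Y_{x,i}=0$ by the convention $\infty^{-1}=0$, and then $Y_{x,i}/(Y_{x,i}+S_i)=0$ whenever $S_i>0$. So that second indicator can be dropped, leaving
\[
\E\Big[\sum_{i=1}^{N_x}\tfrac{Y_{x,i}}{Y_{x,i}+S_i}\mathbf 1\{S_i>0\}\Big].
\]
Next I would apply the size-biasing identity $\E[\sum_{i=1}^{N}g(S_i,(S_{i'})_{i'\ne i})]=\E[N]\,\E[g(S_1,(S_{i'})_{i'=2}^{\hat N+1})]$ with $\hat N\sim\pihatL_x$, which holds since $\E[N\,h(N-1)]=\E[N]\E[h(\hat N)]$. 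This produces the prefactor $\phi_x'(1)$. Under this size-biased law, $Y_{x,1}=(1+\sum_{i'=2}^{\hat N+1}S_{i'}^{-1})^{-1}$ is independent of $S_1$. Comparing with \Cref{def:Theta} with $\boldsymbol\Lambda=\boldsymbol{\pihatL}$ and $\boldsymbol\Lambda'=\boldsymbol{\pihatR}$, its law is $\Theta_{x;\boldsymbol{\pihatL},\boldsymbol{\pihatR}}(\boldsymbol{\widehat\mu})=\widehat\mu_x$, because $\boldsymbol{\widehat\mu}$ is a fixed point. So $Y_{x,1}\stackrel{\mathrm d}{=}X_x$, independent of $S_1$. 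Finally, conditioning on the type $y$ of child $1$ (probability $\aL_{xy}$) gives
\[
\phi_x'(1)\sum_y\aL_{xy}\E\big[\tfrac{X_x}{X_x+S^y}\mathbf 1\{S^y>0\}\big].
\]

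The main obstacle is bookkeeping the degenerate cases ($0/0$ when $Y_{x,i}=0$ and $S_i=0$, or infinite values) so that dropping the indicator is justified. I would handle this by keeping the factor $\mathbf 1\{S_i>0\}$ throughout, so the ratio is always well defined and lies in $[0,1]$. The size-biasing step also needs the standard independence structure of the children's subtrees, which is immediate from the Galton--Watson construction in \Cref{def:UHGW-2type}.
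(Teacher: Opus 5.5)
Your proposal is correct and follows essentially the same argument as the paper. For \eqref{eq:Term-1} it is the identical conditioning computation. For \eqref{eq:Term-2} it uses the same three ingredients: dropping the indicator on the other $S_{i'}$ (since $Y_{x,i}=0$ there), exchangeability plus size-biasing of $N_x$ to produce $\phi_x'(1)$ and a $\pihatL_x$-distributed number of siblings, and the fixed-point property to get $Y_{x,1}\stackrel{\mathrm d}{=}X_x\sim\widehat\mu_x$. The only difference is cosmetic: you package the paper's steps (condition on $N_x=k$, use exchangeability, then resum $k\,\P(N_x=k)$) into a single size-biasing identity.
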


Combining~\Cref{lmm:Term-1-2} with \eqref{eq:ThetaxwithZ}, we get:
\[
    \sum_{x=1}^2 p_x  \,  \mathbb{E}\left[\Theta_{x; \boldsymbol{\piL}, \boldsymbol{\pihatR}}(\boldsymbol{\widehat{\mu}})\right] =
    \sum_{x=1}^2 p_x  \, \phi_x(r_x)-\underbrace{\sum_{y=1}^2\sum_{x=1}^2 p_x \, \phi'_x(1) \, \aL_{xy} \, \E \left[ \frac{X_x}{X_x+S^y} \cdot \mathbf{1}_{\{S^y>0\}} \right]}_{ \mathrm{(ii)} } \, .
\]
Injecting the definition of $r_x$, we have:
\[
    \sum_{x=1}^2 p_x \, \phi_x(r_x)= \sum_{x=1}^2 p_x\,\phi_x \bigg(\sum_{y=1}^2 \aL_{xy} \bigg( 1- \frac{\psi'_y(1-\hat{t}_y)}{\psi'_y(1)}\bigg)\bigg),
\]
which matches the first term in the definition of $F(\hat{t}_1,\hat{t}_2)$. 
    
It remains to show that the term (ii) above matches the second term in the definition of $F(\hat{t}_1,\hat{t}_2)$. Using the unimodularity constraint~\eqref{eq:unimodularity-constraint}, i.e. $p_x\,\aL_{xy} \, \phi'_x(1) = q_y \,\aR_{yx}\, \psi_y'(1)$ for all $x,y \in \{1,2\}$, we have
\begin{align} \label{eq:U_D-formula-2}
    \mathrm{(ii)} =  \sum_{y=1}^2 q_y   \, \psi'_y(1)  \sum_{x=1}^2 \aR_{yx} \, \E\left[ \frac{X_x}{X_x + S^y} \cdot  \mathbf{1}_{\{ S^y > 0\}} \right].
\end{align}
The following lemma now gives an explicit expression for the above quantity.
\begin{lemma}\label{clm:formula-for-Ky}
    Consider a fixed point $(\widehat\mu_1,\widehat\mu_2)$ of $\Theta$ with positivity vector $(\hat{t}_1,\hat{t}_2)$. For each $x\in \{1, 2\}$, generate the independent random variable $X_x \sim \widehat\mu_x$.
    Then,
    \begin{align}\label{eq:formula-for-Ky}
     \psi'_y(1) \cdot  \sum_{x=1}^2 \aR_{yx} \, \E\left[ \frac{X_x}{X_x + S^y} \cdot  \mathbf{1}_{\{ S^y > 0\}} \right]  \, = \,  1-\psi_y(1-\hat t_y)- \hat t_y\,\psi_y'(1-\hat t_y) \, .
    \end{align}
\end{lemma}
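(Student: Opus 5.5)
The plan is to collapse the type mixture on the left-hand side of \eqref{eq:formula-for-Ky} into a single mixture law, and then use a size-biasing identity that turns the excess-degree expectation into an exchangeable sum over all $m$ children of a demand node drawn from $\piR_y$. Fix $y$ and let $\nu_y \triangleq \sum_{x} \aR_{yx}\,\widehat\mu_x$. By definition of the positivity vector, $\nu_y((0,1]) = \hat t_y$.

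\textbf{Step 1 (collapse the mixture).} Recall that $S^y=\sum_{j=1}^{N}X_j$, where $N\sim\pihatR_y$ and the $X_j$ are i.i.d.\ $\nu_y$, all independent of the $X_x$. Then
\[
\sum_x \aR_{yx}\,\E\Big[\tfrac{X_x}{X_x+S^y}\mathbf 1_{\{S^y>0\}}\Big]=\E\Big[\tfrac{X_0}{X_0+S^y}\mathbf 1_{\{S^y>0\}}\Big],
\]
with $X_0\sim\nu_y$ independent of $S^y$. On $\{S^y>0\}$ the ratio is well defined and equals $0$ when $X_0=0$. I note that the fixed-point property of $\widehat{\boldsymbol\mu}$ is not needed here; only the positivity mass $\hat t_y$ enters.

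\textbf{Step 2 (size-biasing).} Since $\psi_y'(1)\,\pihatR_y(k)=(k+1)\,\piR_y(k+1)$, we get
\[
\psi_y'(1)\,\E\Big[\tfrac{X_0}{X_0+S^y}\mathbf 1_{\{S^y>0\}}\Big]=\sum_{m\ge1}\piR_y(m)\, m\,\E\Big[\tfrac{X_1}{X_1+\cdots+X_m}\mathbf 1_{\{\sum_{j\ne 1}X_j>0\}}\Big],
\]
where $X_1,\dots,X_m$ are i.i.d.\ $\nu_y$. By exchangeability, the quantity $m\,\E[\cdot]$ equals $\E\big[\sum_{i=1}^m \tfrac{X_i}{T}\,\mathbf 1_{\{T-X_i>0\}}\big]$ with $T=\sum_j X_j$.

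\textbf{Step 3 (pointwise evaluation).} I would evaluate the random sum $\sum_{i=1}^m \tfrac{X_i}{T}\,\mathbf 1_{\{T-X_i>0\}}$ pointwise in three cases.
\begin{itemize}
  \item If at least two of the $X_i$ are positive, every indicator is $1$, so the sum equals $\sum_i X_i/T=1$.
  \item If exactly one $X_i$ is positive, its own indicator vanishes and all other terms have zero numerator, so the sum is $0$.
  \item If none is positive, every term is $0$.
\end{itemize}
Hence the expectation equals $\P(\text{at least two of }m\text{ i.i.d.\ Bernoulli}(\hat t_y)\text{ succeed})=1-(1-\hat t_y)^m-m\hat t_y(1-\hat t_y)^{m-1}$. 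Summing against $\piR_y(m)$ (including $m=0$, whose contribution is $0$) gives $1-\psi_y(1-\hat t_y)-\hat t_y\psi_y'(1-\hat t_y)$, which is \eqref{eq:formula-for-Ky}.

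The main care point is Step 3's bookkeeping of the conventions. I need the ratio $X_i/T$ to be interpreted as $0$ whenever the indicator is off or $X_i=0$, consistent with how the term arose in~\eqref{eq:S_i_inverse_Z_x}. I also need the size-biasing to be correct at $m=0$, which is handled since that term contributes $0$. The rest of the argument is routine.
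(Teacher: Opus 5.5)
Your proof is correct, and it takes a genuinely different route from the paper's. The paper stays inside the excess-degree draw. It conditions on which of the $N'_y\sim\pihatR_y$ summands are positive, and uses exchangeability of the positive values to reduce the left side to $\psi_y'(1)\,\hat t_y\,\E\bigl[(1+N^\star)^{-1}\mathbf 1_{\{N^\star>0\}}\bigr]$ with $N^\star$ binomial. It then evaluates the binomial sum by writing $\hat t_y^{d+1}/(d+1)=\int_0^{\hat t_y}t^d\,\diff t$ and recognizing $\int_{1-\hat t_y}^{1}\psi_y'(s)\,\diff s$.

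You instead undo the size-biasing first, via $\psi_y'(1)\pihatR_y(k)=(k+1)\piR_y(k+1)$. This turns the tagged-child expectation into a symmetric sum over all $m$ children of a $\piR_y$-distributed node. The pointwise identity $\sum_i \frac{X_i}{T}\mathbf 1_{\{T-X_i>0\}}=\mathbf 1_{\{\text{at least two } X_i>0\}}$ then finishes the computation with no integration. All three cases of that identity, and the $m=0$ and $m=1$ terms, check out.

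Your route is shorter, and it explains the shape of the right-hand side: it is exactly $\P\bigl(\mathrm{Bin}(D,\hat t_y)\ge 2\bigr)$ with $D\sim\piR_y$. The paper's route has only the minor advantage of never leaving the excess distribution that drives the recursion. Your remark that the fixed-point property is not needed is also right. It is consistent with the paper reusing this same computation for an arbitrary $\boldsymbol\nu$ in the comparison inequality~\eqref{eq:comparison-ineq}. The only implicit requirement is $\psi_y'(1)>0$, which is already needed for $\pihatR_y$ to be defined.
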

Applying~\eqref{eq:formula-for-Ky} to~\eqref{eq:U_D-formula-2}, the second term in our decomposition matches the second term in $F(\hat{t}_1,\hat{t}_2)$. This completes the proof.

\subsubsection{Proof of~\texorpdfstring{\Cref{lmm:variational-characterization-new}}{}}

For $\boldsymbol\mu\in\mathcal P([0,1])^2$ and $y\in\{1,2\}$, recall that the positivity vector of $\boldsymbol\mu$ is defined as $t_y(\boldsymbol\mu)\triangleq \sum_{x=1}^2 \aR_{yx}\mu_x((0,1])$. Further, define
\(
    \mathcal R(\boldsymbol\mu)
    \triangleq
    \sum_{x=1}^2 p_x\,
    \mathbb E\!\left[
        \Theta_{x;\boldsymbol{\piL},\boldsymbol{\pihatR}}(\boldsymbol\mu)
    \right] \, .
\)
By~\Cref{lmm:root-rep-formula}, if $\boldsymbol\mu$ is a fixed point of
$\Theta_{\boldsymbol{\pihatL},\boldsymbol{\pihatR}}$, then
\(
    \mathcal R(\boldsymbol\mu)=F(t_1(\boldsymbol\mu),t_2(\boldsymbol\mu)).
\)
Finally, consider the positivity map $H:[0,1]^2\to[0,1]^2$, defined by
\[
    H_y(t_1,t_2)
    \triangleq
    \sum_{x=1}^2 \aR_{yx}\,
    \widehat\phi_x(r_x(t_1,t_2)),
    \qquad
    r_x(t_1,t_2)
    \triangleq
    \sum_{y=1}^2 \aL_{xy}\bigl(1-\widehat\psi_y(1-t_y)\bigr).
\]
A direct differentiation of the function $F$ gives
\begin{align} \label{eq:derivative-formula}
    \frac{\partial F}{\partial t_y}(t_1,t_2)
    =
    q_y\psi_y''(1-t_y)\bigl(H_y(t_1,t_2)-t_y\bigr).
\end{align}
We use the following two elementary facts, proved in Appendix~\ref{app:matching}. 

\begin{lemma}\label{clm:max}
    There exists a maximizer $(\tmax_1,\tmax_2)$ of $F$ satisfying
    $H(\tmax_1,\tmax_2)=(\tmax_1,\tmax_2)$.
\end{lemma}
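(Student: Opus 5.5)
The plan is a first-order (KKT-type) analysis of a global maximizer of $F$, combined with the derivative identity~\eqref{eq:derivative-formula}. The key facts I will use are that the prefactor $q_y\psi_y''(1-t_y)$ is nonnegative, and that $H$ maps $[0,1]^2$ into itself. The latter holds because each $H_y$ is a convex combination of PGFs $\widehat\phi_x$ evaluated at $r_x\in[0,1]$, and $r_x$ is itself a convex combination of $1-\widehat\psi_y(1-t_y)\in[0,1]$.

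First, $F$ is continuous on the compact set $[0,1]^2$, since all PGFs involved are polynomials (bounded support). Hence a maximizer $(t_1,t_2)$ exists. I will examine each coordinate $y$ separately, holding the other coordinate fixed. Set $g(s)\triangleq F$ with $t_y$ replaced by $s$. Then
\[
g'(s)=q_y\psi_y''(1-s)\bigl(H_y(s)-s\bigr),
\]
where $H_y(s)$ denotes $H_y$ with the $y$-th argument equal to $s$. Since $\psi_y''$ is a polynomial with nonnegative coefficients, it is nondecreasing on $[0,1]$. Consequently, either $\psi_y''\equiv 0$, or $\psi_y''(u)>0$ for all $u\in(0,1]$, with possibly $\psi_y''(0)=0$. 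I will treat these two cases separately.

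\emph{Case $\psi_y''\equiv 0$.} Then $\pi^{\mathrm R}_y$ is supported on $\{0,1\}$, so $\widehat\psi_y\equiv 1$. Hence neither $F$ nor $H$ depends on $t_y$. I can therefore replace $t_y$ by $H_y(t_1,t_2)$, which is independent of $t_y$, and the point remains a maximizer.

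\emph{Case $\psi_y''>0$ on $(0,1]$.} I split according to the position of $t_y$.
\begin{itemize}
\item If $t_y\in(0,1)$, then $g'(t_y)=0$ with a strictly positive prefactor, so $H_y=t_y$.
\item If $t_y=0$, maximality gives $g'(0)\le 0$. Since $\psi_y''(1)>0$, this means $H_y\le 0$, and $H_y\ge 0$ forces $H_y=0=t_y$.
\item If $t_y=1$, the prefactor may vanish at $s=1$, so I argue by continuity instead. Suppose $H_y(1)<1$. Then $H_y(s)-s<0$ on some interval $(1-\delta,1)$, where the prefactor is strictly positive. So $g$ is strictly decreasing there, contradicting maximality at $s=1$. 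Hence $H_y(1)\ge 1$, and since $H_y\le 1$, we get $H_y=1=t_y$.
\end{itemize}

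The main subtlety is coordinating the two coordinates. The modification in the degenerate case changes $t_y$, and one must check that it does not affect the other coordinate's fixed-point property. It does not: in that case neither $F$ nor $H$ depends on $t_y$ at all, so the conditions for the other coordinate are unchanged. For nondegenerate coordinates, the argument only uses that the point is a global maximizer, which is preserved by the modification.

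To be safe, I would first perform the replacements for all degenerate coordinates and then apply the boundary and interior analysis to the resulting maximizer $(\tmax_1,\tmax_2)$. This yields $H(\tmax_1,\tmax_2)=(\tmax_1,\tmax_2)$.
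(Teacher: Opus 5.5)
Your proposal is correct and follows essentially the paper's argument: coordinatewise first-order conditions from the identity $\partial F/\partial t_y = q_y\psi_y''(1-t_y)\bigl(H_y - t_y\bigr)$ together with $H_y\in[0,1]$, with the boundary $t_y=1$ (where $\psi_y''(0)$ may vanish) handled by showing $F$ would be strictly decreasing just below $1$. Your explicit treatment of the degenerate case $\psi_y''\equiv 0$, in which neither $F$ nor $H$ depends on $t_y$ so $t_y$ can simply be reset to $H_y$, is a small addition covering a case the paper excludes.
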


Fix such a maximizer and set for $x \in \{1,2\}$,
\(
    \smax_x\triangleq \widehat\phi_x(r_x(\tmax_1,\tmax_2)).
\)
Then, initialize
\[
    \boldsymbol\mu^{(0)}
    \triangleq
    \bigl(\operatorname{Bernoulli}(\smax_1),
          \operatorname{Bernoulli}(\smax_2)\bigr),
    \qquad
    \boldsymbol\mu^{(k)}
    \triangleq
    \Theta_{\boldsymbol{\pihatL},\boldsymbol{\pihatR}}^k
    (\boldsymbol\mu^{(0)}).
\]

\begin{lemma}\label{clm:const}
    For every $k\ge0$, $\mu_x^{(k)}((0,1])=\smax_x$ for $x\in\{1,2\}$, and hence
    \[
        (t_1^{(k)}, t_2^{(k)}) \triangleq (t_1(\boldsymbol\mu^{(k)}),t_2(\boldsymbol\mu^{(k)}))
        =
        (\tmax_1,\tmax_2).
    \]
\end{lemma}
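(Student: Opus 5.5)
We argue by induction on $k$. For $k=0$ the claim is immediate, since $\mu_x^{(0)}=\operatorname{Bernoulli}(\smax_x)$ gives $\mu_x^{(0)}((0,1])=\smax_x$. Then
\[
t_y(\boldsymbol\mu^{(0)})=\sum_{x}\aR_{yx}\smax_x=\sum_x\aR_{yx}\widehat\phi_x(r_x(\tmax_1,\tmax_2))=H_y(\tmax_1,\tmax_2)=\tmax_y,
\]
where the last equality is the fixed-point property from \Cref{clm:max}. So the second assertion follows from the first at every $k$ by this same computation. It remains to show that if $\mu_x^{(k)}((0,1])=\smax_x$ for both $x$, then the same holds for $\boldsymbol\mu^{(k+1)}=\Theta_{\boldsymbol{\pihatL},\boldsymbol{\pihatR}}(\boldsymbol\mu^{(k)})$.

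The key step is to compute the positivity mass of $\Theta_x(\boldsymbol\mu)$ for an arbitrary $\boldsymbol\mu$; it should depend on $\boldsymbol\mu$ only through its positivity vector. In \eqref{eq:Thetadef}, $Y_x>0$ iff the inner sum $\sum_i(\sum_j X_{ij})^{-1}$ is finite, that is, iff every $S_i=\sum_{j\le N_i'}X_{ij}>0$. Here we use the conventions $0^{-1}=\infty$ and $\infty^{-1}=0$, and the fact that each $X_{ij}\in[0,1]$, so $S_i<\infty$ and $S_i^{-1}>0$ never causes trouble. Now $S_i>0$ iff at least one $X_{ij}>0$. Conditional on the intermediate type $y$, the $X_{ij}$ are i.i.d.\ from $\sum_{x'}\aR_{yx'}\mu_{x'}$, whose mass on $(0,1]$ is $t_y(\boldsymbol\mu)$, and $N_i'\sim\pihatR_y$. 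Hence $\P(S_i=0\mid y)=\widehat\psi_y(1-t_y)$, and averaging over $y$ gives $\P(S_i>0)=1-\sum_y\aL_{xy}\widehat\psi_y(1-t_y)=r_x(t_1,t_2)$. The $S_i$ are i.i.d.\ given $N_x\sim\pihatL_x$, so
\[
\Theta_x(\boldsymbol\mu)((0,1])=\E\bigl[r_x^{N_x}\bigr]=\widehat\phi_x\bigl(r_x(t_1(\boldsymbol\mu),t_2(\boldsymbol\mu))\bigr).
\]
This is the same computation as \eqref{eq:Term-1} in \Cref{lmm:Term-1-2}, with $\pihatL_x$ in place of $\piL_x$, so it can be cited in that form.

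With this identity the induction closes. The hypothesis gives $t(\boldsymbol\mu^{(k)})=(\tmax_1,\tmax_2)$ by the computation above, and therefore $\mu^{(k+1)}_x((0,1])=\widehat\phi_x(r_x(\tmax_1,\tmax_2))=\smax_x$. The main point requiring care is the positivity identity: one must check that $Y_x=0$ exactly when some $S_i=0$, with no contribution from the event $N_x=0$ (which gives $Y_x=1>0$, consistent with $r_x^0=1$). Beyond the fixed-point property of $(\tmax_1,\tmax_2)$ under $H$, the argument needs no information about the maximizer.
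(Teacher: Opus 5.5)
Your proof is correct and follows the paper's argument essentially verbatim. You induct on $k$, and you use the fixed-point identity $H(\tmax_1,\tmax_2)=(\tmax_1,\tmax_2)$ from \Cref{clm:max} to pass from positivity masses to the positivity vector. Your inductive step computes $\mu_x^{(k+1)}((0,1])=\P(Y_x>0)=\widehat\phi_x\bigl(r_x(t^{(k)})\bigr)$ exactly as in \eqref{eq:Term-1}, with $\pihatL_x$ in place of $\piL_x$.
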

Among all laws $\nu$ on $[0,1]$ such that $\nu((0,1]) = \smax_x$, the Bernoulli law $\mu_x^{(0)}$ defined above is stochastically largest, since it concentrates all positive mass at the maximal value $1$. Therefore,
$\boldsymbol\mu^{(0)}\succeq\boldsymbol\mu^{(1)}$. By monotonicity of
$\Theta_{\boldsymbol{\pihatL},\boldsymbol{\pihatR}}$,
\[
    \boldsymbol\mu^{(0)}
    \succeq
    \boldsymbol\mu^{(1)}
    \succeq
    \boldsymbol\mu^{(2)}
    \succeq\cdots .
\]
Therefore $\boldsymbol\mu^{(k)}$ converges weakly to some
$\boldsymbol\mu^{(\infty)}\in\mathcal P([0,1])^2$. Since the degree distributions are bounded,
$\Theta_{\boldsymbol{\pihatL},\boldsymbol{\pihatR}}$ is weakly continuous, and so
\(
    \boldsymbol\mu^{(\infty)}
    =
    \Theta_{\boldsymbol{\pihatL},\boldsymbol{\pihatR}}
    (\boldsymbol\mu^{(\infty)}).
\)

It remains to lower-bound $\mathcal R(\boldsymbol\mu^{(\infty)})$. To that end, we use the following comparison
inequality: If $\boldsymbol\nu\in\mathcal P([0,1])^2$ has positivity vector $t=(t_1,t_2)$ and
\(
    \widetilde{\boldsymbol\nu}
    \triangleq
    \Theta_{\boldsymbol{\pihatL},\boldsymbol{\pihatR}}(\boldsymbol\nu)
    \preceq
    \boldsymbol\nu,
\)
then
\begin{equation}\tag{CI}\label{eq:comparison-ineq}
    \mathcal R(\boldsymbol\nu)\ge F(t_1,t_2).
\end{equation}
Indeed, let
\[
    \xi_y\triangleq \sum_{x=1}^2 \aR_{yx}\nu_x,
    \qquad
    \widetilde\xi_y\triangleq \sum_{x=1}^2 \aR_{yx}\widetilde\nu_x .
\]
Repeating the decomposition in the proof of~\Cref{lmm:root-rep-formula} gives
\[
    \mathcal R(\boldsymbol\nu)
    =
    \sum_{x=1}^2 p_x\phi_x(r_x(t))
    -
    \sum_{y=1}^2 q_y\psi_y'(1)\,
    \mathbb E\!\left[
        \frac{\widetilde X_y}{\widetilde X_y+S_y}
        \mathbf 1_{\{S_y>0\}}
    \right],
\]
where $\widetilde X_y\sim\widetilde\xi_y$ and
$S_y=\sum_{j=1}^{\widehat N_y}X_{y,j}$, with
$\widehat N_y\sim\pihatR_y$ and $X_{y,j}\stackrel{\rm iid}{\sim}\xi_y$.
Since $\widetilde{\boldsymbol\nu}\preceq\boldsymbol\nu$, we have
$\widetilde\xi_y\preceq\xi_y$, and hence the expectation above is at most the same expression
with $\widetilde X_y$ replaced by $X_y\sim\xi_y$. The exchangeability calculation from
\Cref{lmm:root-rep-formula} then gives
\[
    \psi_y'(1) \cdot 
    \mathbb E\!\left[
        \frac{X_y}{X_y+S_y}\mathbf 1_{\{S_y>0\}}
    \right]
    =
    1-\psi_y(1-t_y)-t_y\psi_y'(1-t_y),
\]
which proves~\eqref{eq:comparison-ineq}.
Applying~\eqref{eq:comparison-ineq} with
$\boldsymbol\nu=\boldsymbol\mu^{(k)}$, and using
$\boldsymbol\mu^{(k+1)}\preceq\boldsymbol\mu^{(k)}$ with \Cref{clm:const}, yields
\[
    \mathcal R(\boldsymbol\mu^{(k)})
    \ge
    F(\tmax_1,\tmax_2)
    \qquad\text{for all }k\ge0.
\]
The root operator $\Theta_{\boldsymbol{\piL},\boldsymbol{\pihatR}}$ is weakly continuous, so
$\mathcal R(\boldsymbol\mu^{(k)})\to\mathcal R(\boldsymbol\mu^{(\infty)})$. Thus
\(
    \mathcal R(\boldsymbol\mu^{(\infty)})
    \ge
    F(\tmax_1,\tmax_2).
\)

Finally, let $\boldsymbol\mu^\star$ be the largest fixed point of
$\Theta_{\boldsymbol{\pihatL},\boldsymbol{\pihatR}}$, and let
$(t_1^\star,t_2^\star)$ be its positivity vector. Since
$\boldsymbol\mu^\star\succeq\boldsymbol\mu^{(\infty)}$ and
$\Theta_{\boldsymbol{\piL},\boldsymbol{\pihatR}}$ is increasing,
\[
    F(t_1^\star,t_2^\star)
    =
    \mathcal R(\boldsymbol\mu^\star)
    \ge
    \mathcal R(\boldsymbol\mu^{(\infty)})
    \ge
    F(\tmax_1,\tmax_2),
\]
where the first equality uses~\Cref{lmm:root-rep-formula}. Since
$(\tmax_1,\tmax_2)$ maximizes $F$, the reverse inequality
$F(t_1^\star,t_2^\star)\le F(\tmax_1,\tmax_2)$ is immediate. Hence equality holds, proving
\Cref{lmm:variational-characterization-new}.

\section{Analysis for dominance regimes for flexibility allocation} \label{sec:dominance}

In this section, we use~\Cref{thm:matching-rate} to prove the theorems of~\Cref{sec:dominance-theorems}, i.e. results on dominance of one-sided versus two-sided allocation for various regimes.

\subsection{Analysis for one-sided dominance}

\subsubsection{Proof of \prettyref{thm:B_1}}
Define $\beta \triangleq (\alpha + \alpha_f)/2$. Setting $B=1$ in~\Cref{cor:matching_size_one}, the function $F_{\OS}$ in~\eqref{eq:FOSdef} simplifies to
\begin{align*}
    F_{\OS}(t_1)
    = \exp\left(-2\beta\, e^{-2\beta t_1}\right)
      + e^{-2\beta t_1}(1 + 2\beta t_1) - 1 \, ,
\end{align*}
for $t_1 \in [0,1]$. Further, setting $B = 1$ in~\Cref{cor:matching_size_two}, the constants $M_1$ and $M_2$ reduce to
\[
    M_1 = \frac{3\alpha + \alpha_f}{2} \, , 
    ~~~~
    M_2 = \frac{\alpha + 3\alpha_f}{2} \, .
\]
Thus, setting $u_1 \triangleq e^{-M_1 t_1}$ and $u_2 \triangleq e^{-M_2 t_2}$, the function $F_{\TS}$ in~\eqref{eq:FTSdef} becomes
\begin{align}
    F_{\TS}(t_1, t_2)
    = \frac{1}{2}\,e^{-\alpha u_1 - \beta u_2}
      + \frac{1}{2}\,e^{-\beta u_1 - \alpha_f u_2}
      - 1 + \frac{1}{2}(u_1 + u_2)
      + \frac{1}{2}(M_1 t_1 u_1 + M_2 t_2 u_2) \, ,
    \label{eq:FTS}
\end{align}
for $(t_1, t_2) \in [0,1]^2$. We start by establishing that
\begin{align}\label{eq:domainextension}
    \max_{t_1, t_2 \in [0,1]} F_{\TS}(t_1, t_2)
    \,=\,
    \sup_{t_1, t_2 \ge 0} F_{\TS}(t_1, t_2).
\end{align}
Differentiating~\eqref{eq:FTS} with respect to $t_1$ (using $\partial u_1/\partial t_1 = -M_1 u_1$) gives
\[
    \frac{\partial F_{\TS}}{\partial t_1}
    = \frac{M_1 u_1}{2}
      \left(
        \alpha\, e^{-\alpha u_1 - \beta u_2}
        + \beta\, e^{-\beta u_1 - \alpha_f u_2}
        - M_1 t_1
      \right) \, .
\]
Since $e^{-z} \le 1$ for all $z \ge 0$, we have
\[
    \alpha\, e^{-\alpha u_1 - \beta u_2}
    + \beta\, e^{-\beta u_1 - \alpha_f u_2}
    \le \alpha + \beta
    = \frac{3\alpha + \alpha_f}{2} 
    = M_1 \, ,
\]
where the last equality uses the definition $\beta = (\alpha+\alpha_f)/2$.
Therefore, for $t_1 \ge 1$,
\[
    \frac{\partial F_{\TS}}{\partial t_1}
    \,\le\, \frac{M_1 u_1}{2}(M_1 - M_1 t_1)
    \,=\, \frac{M_1^2 u_1}{2}(1 - t_1)
    \,\le\, 0.
\]
By an identical argument, $\partial F_{\TS}/\partial t_2 \le 0$ for all $t_2 \ge 1$. Hence $F_{\TS}$ is non-increasing in each coordinate outside $[0,1]$, so we may extend the optimization domain from $[0,1]^2$ to $[0,\infty)^2$ without loss. This establishes~\eqref{eq:domainextension}.

Next, let $t^{\star}$ denote a maximizer of $F_{\OS}$ over $[0,1]$, and set $U \triangleq e^{-2\beta t^{\star}}$. Define the coupled point
\begin{equation}\label{eq:coupled}
    t_1^\dagger \,\triangleq\, \frac{2\beta\, t^{\star}}{M_1},
    \qquad
    t_2^\dagger \,\triangleq\, \frac{2\beta\, t^{\star}}{M_2},
    \qquad \text{so that} \quad u_1 = u_2 = U.
\end{equation}
Note that $t_1^\dagger, t_2^\dagger \ge 0$; they need not lie in $[0,1]$, but this is immaterial since Step~1 allows us to work with the supremum over $[0,\infty)^2$.
Evaluating~\eqref{eq:FTS} at $(t_1^\dagger, t_2^\dagger)$ and using $u_1 = u_2 = U$ (and noting $M_i t_i^\dagger = 2\beta t^{\star}$ for $i=1,2$), we have
\begin{align}
    F_{\TS}(t_1^\dagger, t_2^\dagger)
    = \frac{1}{2}\,e^{-(\alpha+\beta)U}
      + \frac{1}{2}\,e^{-(\beta+\alpha_f)U}
      - 1 + U + 2\beta t^{\star} U \, .
    \label{eq:FTSeval}
\end{align}
We apply the AM-GM inequality\footnote{(AM-GM inequality) For two non-negative reals $a$ and $b$, we have $\frac 1 2(a+b) \geq \sqrt{ab}$, with equality iff $a=b$.} to the two exponential terms in~\eqref{eq:FTSeval}. 
Since $(\alpha+\beta)+(\beta+\alpha_f) = 4\beta$, AM-GM applied to $a = e^{-(\alpha+\beta)U}$ and $b = e^{-(\beta+\alpha_f)U}$ gives
\[
    \frac{1}{2} \, \left(
      e^{-(\alpha+\beta)U} + e^{-(\beta+\alpha_f)U}
    \right)
    \,\ge\,
    \sqrt{\,e^{-(\alpha+\beta)U}\cdot e^{-(\beta+\alpha_f)U}\,}
    \,=\,
    \sqrt{e^{-4\beta U}}
    \,=\, e^{-2\beta U} \, .
\]
Substituting this bound into~\eqref{eq:FTSeval} and recalling that $F_{\OS}(t^{\star}) = e^{-2\beta U} - 1 + U + 2\beta t^{\star} U$, we obtain
\begin{align}
    F_{\TS}(t_1^\dagger, t_2^\dagger)
    \,\ge\, e^{-2\beta U} - 1 + U + 2\beta t^{\star} U
    \,=\, F_{\OS}(t^{\star}) \, .
    \label{eq:bridge}
\end{align}
In summary, chaining the results of both steps yields
\[
    \max_{t_1,\, t_2 \in [0,1]} F_{\TS}
    \, = \, \sup_{t_1,\, t_2 \ge 0} F_{\TS}(t_1, t_2)
    \, \ge \, F_{\TS}(t_1^\dagger, t_2^\dagger)
    \, \ge \, F_{\OS}(t^{\star})
    \, = \, \max_{t \in [0,1]} F_{\OS}(t) \, .
\]
The first equality follows from~\eqref{eq:domainextension}, the last equality holds by definition of $t^{\star}$. Since $\match_{\OS} = 1 - \max F_{\OS}$ and $\match_{\TS} = 1 - \max F_{\TS}$, we conclude $\match_{\OS}(1,\alpha,\alpha_f) \ge \match_{\TS}(1,\alpha,\alpha_f)$.

The AM-GM inequality is strict iff \ $e^{-(\alpha+\beta)U} \neq e^{-(\beta+\alpha_f)U}$, which holds (since $U > 0$) iff $\alpha \neq \alpha_f$. Hence the above chain of inequalities is strict whenever $\alpha \neq \alpha_f$. When $\alpha = \alpha_f$, all entries of the connection matrix $\bfC$ equal $2\alpha$, so the compatibility graph $G_n$ has the same distribution regardless of the flexibility allocation, i.e. $\match_{\OS}(1,\alpha,\alpha) = \match_{\TS}(1,\alpha,\alpha)$.

\subsubsection{Proof of~\texorpdfstring{\Cref{thm:alpha_0}}{}}
Substituting $\alpha = 0$ in the expressions of~\Cref{cor:matching_size_one} and~\Cref{cor:matching_size_two}, the objective functions become
\begin{align} \label{eq:F-OS-alpha-0}
    F_{\OS}(t_1) 
    &= B \exp  \left(   -\alpha_f \,  e^{-\alpha_f B t_1} \right) 
       + e^{-\alpha_f B t_1} (1 + \alpha_f B t_1) - B \, ,
\end{align}
and
\begin{align}
    \begin{aligned} \label{eq:F-TS-alpha-0}
    F_{\TS}(t_1, t_2) 
    &= \left(1-\tfrac{B}{2}\right)
       \exp   \left(   -\tfrac{\alpha_f B}{2} \cdot 
       e^{-\alpha_f(1+B/2)t_2}\right) \\[0.5 em]
    &\quad + \tfrac{B}{2}
       \exp   \left(   -\alpha_f \left(1-\tfrac{B}{2}\right) \cdot 
       e^{- \alpha_f B t_1 / 2}
       -\alpha_f B\,e^{-\alpha_f(1+B/2)t_2}\right) \\[0.5em]
    &\quad + \left(1-\tfrac{B}{2}\right)  
       e^{- \alpha_f B t_1 / 2}  
       \left(1+\tfrac{\alpha_f B}{2}t_1\right)  + \tfrac{B}{2}\,
       e^{-\alpha_f(1+B/2)t_2} \cdot 
       \left(1+\alpha_f \left(1+\tfrac{B}{2} \right ) \,  t_2 \right)
       -  1  .
    \end{aligned}
\end{align}

First, we upper bound $F_{\OS}$. Let $y = e^{-\alpha_f B t_1}$. Since $t_1\in[0,1]$, it follows that $y\in[e^{-\alpha_f B},1] \subset (0,1]$.  Define the function $g(y)$ from $F_{\OS}(t_1)$ via this change of variable, i.e.
\begin{align}
        g(y) = Be^{-\alpha_f y} + y(1-\ln y) - B \, . \label{eq:g_y} 
\end{align}
We make the following claim.
\begin{claim}\label{clm:maximizer}
    For sufficiently large $\alpha_f$, the global maximizer $y^{\star}$ of $g(y)$ on $[e^{-\alpha_f B},1]$ satisfies 
    \[
        1 > y^{\star} > c_B \geq e^{-\alpha_f B} \, ,
    \]
    where $c_B\in(0,1)$ is the unique solution to $x(1-\ln x)=1-B$.
\end{claim}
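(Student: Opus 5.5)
We study $g$ on $[e^{-\alpha_f B},1]$ by comparing its value at a fixed interior point with its values near the endpoints and on $(0,c_B]$. Write $h(y)\triangleq y(1-\ln y)$. Then $h'(y)=-\ln y>0$ on $(0,1)$, $h(0^+)=0$ and $h(1)=1$, so $h$ is strictly increasing from $0$ to $1$. Since $1-B\in(0,1)$, this gives existence and uniqueness of $c_B\in(0,1)$. The inequality $c_B\ge e^{-\alpha_f B}$ holds once $\alpha_f\ge -\ln(c_B)/B$, so it suffices to locate the maximizer.

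\emph{Step 1: a competitor value.} Fix $y_0\triangleq (1+c_B)/2\in(c_B,1)$. Then
\[
g(y_0)=h(y_0)-B+Be^{-\alpha_f y_0}\;\ge\; h(y_0)-B\;>\;h(c_B)-B=1-2B .
\]
This lower bound does not depend on $\alpha_f$; the Poisson term only helps.

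\emph{Step 2: excluding $y\le c_B$.} For $y\in[e^{-\alpha_f B},c_B]$ we have $g(y)\le Be^{-\alpha_f y}+h(c_B)-B$. The term $Be^{-\alpha_f y}$ is not small uniformly near the left endpoint, so we split the interval.
\begin{itemize}
\item On $[\delta,c_B]$ for a fixed small $\delta>0$: $g(y)\le Be^{-\alpha_f\delta}+1-2B$, which is below $g(y_0)$ for large $\alpha_f$ because of the fixed gap $h(y_0)-h(c_B)>0$.
\item On $[e^{-\alpha_f B},\delta]$: $g(y)\le B+h(\delta)-B=h(\delta)$. Here $h(\delta)\to0$ as $\delta\to0$, while $g(y_0)\ge h(y_0)-B$. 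This comparison fails if $h(y_0)\le B$.
\end{itemize}

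\emph{Main obstacle.} The second bullet is where the argument can break. The fix is to choose the competitor point near $1$ instead: take $y_0=1-\epsilon$ with $\epsilon$ small, so that $h(y_0)-B$ is close to $1-B>0$. Then pick $\delta$ with $h(\delta)<1-B-\eta$ for a small margin $\eta$. For $y\le\delta$ this gives $g(y)\le h(\delta)<g(y_0)$. The first bullet works unchanged, since $h(y_0)>h(c_B)$ still holds.

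This splitting is essential. Without it, the crude bound $B+h(y)-B$ near $y=e^{-\alpha_f B}$ equals $h(y)$, which is tiny, so that region is harmless only because $h$ vanishes at $0$.

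\emph{Step 3: excluding $y=1$.} We have $g'(y)=-\alpha_f Be^{-\alpha_f y}-\ln y$, so $g'(1)=-\alpha_f Be^{-\alpha_f}<0$. Hence the maximizer is not at $1$, and $y^\star<1$.

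\emph{Conclusion.} Combining the three steps, for all sufficiently large $\alpha_f$ the maximum of $g$ is attained in $(c_B,1)$, i.e. $1>y^\star>c_B\ge e^{-\alpha_f B}$. All thresholds on $\alpha_f$ depend only on $B$, through the fixed choices of $\epsilon$, $\delta$ and $\eta$.
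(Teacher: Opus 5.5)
Your proof is correct and follows essentially the paper's approach: compare against a competitor value using $g\le h$ and the monotonicity of $h(y)=y(1-\ln y)$, then use $g'(1)<0$ to exclude the right endpoint. The one difference is that the paper takes the competitor to be $y=1$ itself and keeps the exponential term. This gives $g(1)=1-B+Be^{-\alpha_f}>1-B=h(c_B)\ge h(y)\ge g(y)$ for every $y\le c_B$ in one line. Your ``main obstacle'' arises only because Step~1 discards $Be^{-\alpha_f y_0}$, so the splitting with $\epsilon,\delta,\eta$ is not actually essential.
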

Claim~\ref{clm:maximizer} implies that $y^{\star}$ is an interior point satisfying $g'(y^{\star})=0$, i.e. $-\alpha_f B e^{-\alpha_f y^{\star}}-\ln(y^{\star}) = 0$. We therefore have
\[
    y^{\star} = \exp\left(   -\alpha_f Be^{-\alpha_f y^{\star}}\right)
    \stepa{\geq} \exp\left(-\alpha_f Be^{-\alpha_f c_B}\right)
    \stepb{\geq} 1-\alpha_f Be^{-\alpha_f c_B} \, ,
\]
where (a) uses $y^{\star}>c_B$ and (b) uses $e^{-z}\geq 1-z$. Applying $y(1-\ln y)\leq 1$ for $y\in(0,1]$ yields
\[
    g(y^{\star}) \leq 1 - B + Be^{-\alpha_f y^{\star}}
    \leq 1 - B + B \exp\left(  -\alpha_f(1-\alpha_f Be^{-\alpha_f c_B}) \right) \, .
\]
For sufficiently large $\alpha_f$, using $e^z\leq 1+2z$ for $z\in[0,\ln 2]$ yields the upper bound
\begin{align} \label{eq:UB-gy}
    g(y^{\star}) \leq 1-B + Be^{-\alpha_f}
    \left(1+2\alpha_f^2 \,B \, e^{-\alpha_f \, c_B} \right).
\end{align} 
   
\noindent Next, we lower bound $F_{\TS}$, by evaluating $F_{\TS}$ at $(t_1,t_2)=(0,1)$. We have from~\eqref{eq:F-TS-alpha-0} that
\begin{align}
    \max_{t_1,t_2\in[0,1]}F_{\TS} 
    & \geq F_{\TS}(0,1) \nonumber
    \\
    & = \left(1-\tfrac{B}{2}\right) \cdot 
       \exp \left(-\tfrac{\alpha_f B}{2}\,
       e^{-\alpha_f(1+B/2)}\right) + \tfrac{B}{2}
       \exp\left(-\alpha_f \left(1-\tfrac{B}{2}\right)
       -\alpha_f B\,e^{-\alpha_f(1+B/2)}\right) \nonumber \\[0.4 em]
    & ~~~~~~ - \tfrac{B}{2}
       + \tfrac{B}{2}\,e^{-\alpha_f(1+B/2)}
       \left(1+\alpha_f \left(1+\tfrac{B}{2}\right)\right)\nonumber \\[0.6 em]
    & \begin{aligned} \label{eq:LB-FTS}
    & \geq 1-B
      -\left(1-\tfrac{B}{2}\right) \left(\tfrac{\alpha_f B}{2}
      e^{-\alpha_f(1+B/2)}\right) + \tfrac{B}{2}e^{-\alpha_f(1-B/2)}
      \left(1-\alpha_f Be^{-\alpha_f(1+B/2)}\right) \\[0.4em]
    & ~~~~~~
      +\tfrac{B}{2} \, e^{-\alpha_f(1+B/2)} \, 
      \left(1+\alpha_f \left(1+\tfrac{B}{2}\right)\right) \, ,
      \end{aligned} 
\end{align}
where the second inequality holds by applying the inequality $e^{-x}\geq 1-x$. 

When taking the difference between~\eqref{eq:LB-FTS} and~\eqref{eq:UB-gy}, the $1-B$ term cancels.
Further, since $B\in(0,1)$, the exponent $-\alpha_f(1-B/2)$ is strictly greater than $-\alpha_f$, $-\alpha_f(1+B/2)$, and $-\alpha_f(1+c_B)$. Therefore, for $\alpha_f$ sufficiently large, the positive term $\frac{B}{2}e^{-\alpha_f(1-B/2)}$ strictly dominates all remaining terms, which are of order at most $e^{-\alpha_f}$ or $e^{-\alpha_f(1+c_B)}$, giving $\max F_{\TS}-\max F_{\OS}>0$ for all sufficiently large $\alpha_f$. Since $\match=1-\max F$, this gives $\match_{\OS}(B,0,\alpha_f)> \match_{\TS}(B,0,\alpha_f)$.

\begin{proof}[Proof of~\texorpdfstring{\Cref{clm:maximizer}}{}]
    Set $\alpha_f\geq  {-\ln (c_B)}/{B}$, which guarantees $e^{-\alpha_f B}\leq c_B$, so $c_B$ lies within the domain. By~\eqref{eq:g_y}, $g(y)\leq y(1-\ln y)$ for all $y\in(0,1]$. At the upper boundary, $g(1)=1-B+Be^{-\alpha_f}$. Since $y^{\star}$ is the global maximizer, $g(y^{\star})\geq g(1)$.
    
    Let $x^{\star}$ be the unique solution to $x(1-\ln x)=1-B+Be^{-\alpha_f}$. Since $Be^{-\alpha_f}>0$, we have $x^{\star}(1-\ln x^{\star})>c_B(1-\ln c_B)$, and by strict monotonicity of $y(1-\ln y)$ on $(0,1]$, $x^{\star}>c_B$. If $y^{\star}<x^{\star}$, then $g(y^{\star})\leq y^{\star}(1-\ln y^{\star})<x^{\star}(1-\ln x^{\star})=g(1)$, contradicting $g(y^{\star})\geq g(1)$. Hence $y^{\star}\geq x^{\star}>c_B$.
    
    To see $y^{\star}<1$: $g'(y)=-\alpha_f Be^{-\alpha_f y}-\ln y$, so $g'(1)=-\alpha_f Be^{-\alpha_f}<0$, meaning $g$ is strictly decreasing at $y=1$. Hence the maximizer cannot be at $y=1$, giving $y^{\star}<1$.
\end{proof}

\subsection{Analysis of two-sided dominance}

Our proofs in this subsection utilize the following formulas for the limiting matching rate as $\alpha_f \to \infty$ -- they are proved in Appendix~\ref{app:limiting}.

\begin{proposition}\label{prop:oneside}
    Let $B\in(0,1)$ and $\alpha\geq 0$. The one-sided matching rate satisfies
    \[
        \lim_{\alpha_f \rightarrow \infty}
        \match_{\OS}(B, \alpha, \alpha_f) 
        = 1 - \max_{y\in(0,1]} \Phi_{\OS}(y) \, ,
    \]
    where
    \begin{align}\label{eq:Phi_os}
        \Phi_{\OS}(y) 
        =
        (1-B)e^{-2\alpha y} + y(1-\ln y) - 1 \, .
    \end{align}
\end{proposition}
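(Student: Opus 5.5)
We start from Corollary~\ref{cor:matching_size_one} and apply the change of variables $y = e^{-M t_1}$ with $M \triangleq 2\alpha + (\alpha_f-\alpha)B$. Then $M t_1 = -\ln y$, so
\[
    F_{\OS}(t_1) = G_{\alpha_f}(y) \triangleq (1-B)e^{-2\alpha y} + B e^{-(\alpha+\alpha_f) y} + y(1-\ln y) - 1 ,
\]
and $t_1\in[0,1]$ corresponds to $y \in [e^{-M},1]$. Since $B>0$, $M\to\infty$ as $\alpha_f\to\infty$, and the lower end $e^{-M}$ of the domain tends to $0$. We also have $G_{\alpha_f}(y) = \Phi_{\OS}(y) + B e^{-(\alpha+\alpha_f)y}$. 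The claim therefore reduces to
\[
    \max_{y\in[e^{-M},1]} G_{\alpha_f}(y) \to \sup_{y\in(0,1]}\Phi_{\OS}(y).
\]
Note that $\Phi_{\OS}$ extends continuously to $y=0$ with value $1-B-1=-B$, because $y\ln y\to 0$.

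For the lower bound, take any fixed $y_0\in(0,1]$. For all large $\alpha_f$ we have $y_0\ge e^{-M}$, so
\[
    \max G_{\alpha_f} \ge G_{\alpha_f}(y_0) \ge \Phi_{\OS}(y_0).
\]
Hence $\liminf_{\alpha_f\to\infty}\max G_{\alpha_f}\ge \sup_{(0,1]}\Phi_{\OS}$.

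For the upper bound, the perturbation $Be^{-(\alpha+\alpha_f)y}$ is not uniformly small near $y=0$, so we split at a threshold $\delta>0$. On $[\delta,1]$ the perturbation is at most $Be^{-\alpha_f\delta}\to 0$, so there $G_{\alpha_f}\le \sup\Phi_{\OS} + o(1)$. On $(0,\delta]$ we bound each term crudely:
\[
    G_{\alpha_f}(y) \le (1-B) + B + \delta(1-\ln\delta) - 1 = \delta(1-\ln \delta).
\]
We need this to lie below $\sup\Phi_{\OS}$. This is the main obstacle, since it requires $\sup_{(0,1]}\Phi_{\OS}$ to be bounded below by a positive constant. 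We get this from
\[
    \Phi_{\OS}(1) = (1-B)e^{-2\alpha} \ge 0,
\]
which is strictly positive for $B<1$. Choosing $\delta$ small enough that $\delta(1-\ln\delta) < (1-B)e^{-2\alpha}$ handles $(0,\delta]$.

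Combining the two ranges gives $\limsup_{\alpha_f\to\infty}\max G_{\alpha_f}\le \sup\Phi_{\OS}$. The supremum is attained, as a max, because $\Phi_{\OS}$ is continuous on $[0,1]$, its value at $0$ is negative, and its supremum is positive, so the maximizer lies in $(0,1]$. Finally, Corollary~\ref{cor:matching_size_one} gives $\match_{\OS} = 1 - \max G_{\alpha_f}$, which yields the stated limit.
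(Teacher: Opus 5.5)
Your proof is correct and is essentially the paper's argument. You use the same change of variables $y=e^{-Mt_1}$, the same crude bound $G_{\alpha_f}(y)\le y(1-\ln y)$ near $0$ compared against $\Phi_{\OS}(1)=(1-B)e^{-2\alpha}>0$, and uniform decay of the perturbation $Be^{-(\alpha+\alpha_f)y}$ away from $0$; the only difference is organizational, since you split into $\liminf$/$\limsup$ bounds at a small $\delta$, whereas the paper first localizes the maximizer to a fixed interval $[\varepsilon,1]$ and then applies uniform convergence there.
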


\begin{proposition}\label{prop:twoside}
    Let $B\in(0,1)$ and $\alpha\geq 0$. The two-sided matching rate satisfies 
    \[
        \lim_{\alpha_f \rightarrow \infty}\match_{\TS}(B, \alpha,\alpha_f) 
        = 1 - \max\Big(0,\,\max_{y\in(0,1]} \Phi_{\TS}(y) \Big),
    \]
    where
    \begin{align} \label{eq:Phi_ts}
       \Phi_{\TS}(y) 
       = \Big(1-\frac{B}{2}\Big)
         \left[e^{-2\alpha(1-B/2)y} + y(1-\ln y)\right] - 1 \, .
    \end{align}
\end{proposition}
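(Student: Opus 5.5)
The plan is to rewrite $F_{\TS}$ from \Cref{cor:matching_size_two} in exponential coordinates, then prove matching lower and upper bounds on $\max F_{\TS}$ as $\alpha_f\to\infty$. Set $u_1\triangleq e^{-M_1t_1}$ and $u_2\triangleq e^{-M_2t_2}$. Since $M_yt_yu_y=-u_y\ln u_y$, we get
\[
F_{\TS}
= \Big(1-\tfrac B2\Big)e^{-2\alpha(1-B/2)u_1-(\alpha+\alpha_f)\frac B2 u_2}
+\tfrac B2 e^{-(\alpha+\alpha_f)(1-B/2)u_1-\alpha_f B u_2}
+\Big(1-\tfrac B2\Big)h(u_1)+\tfrac B2 h(u_2)-1,
\]
where $h(u)\triangleq u(1-\ln u)$. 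The domain is $u_1\in[e^{-M_1},1]$ and $u_2\in[e^{-M_2},1]$. Both $M_1$ and $M_2$ grow linearly in $\alpha_f$ because $B>0$, so the domain fills out $(0,1]^2$. Since $h$ is increasing on $(0,1]$ with $h(0^+)=0$ and $h(1)=1$, we have $\Phi_{\TS}(y)\to -B/2<0$ as $y\to0$. Hence $\Phi_{\TS}$ extends continuously to $[0,1]$, its supremum over $(0,1]$ is finite and attained, and $\max(0,\sup\Phi_{\TS})$ is well defined. The claim then reduces to showing $\max F_{\TS}\to \max\big(0,\max_{y}\Phi_{\TS}(y)\big)$, since $\match_{\TS}=1-\max F_{\TS}$.

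\textbf{Lower bound.} First take $t_2=1$, i.e.\ $u_2=e^{-M_2}$, so that $\alpha_f u_2\to0$ and $h(u_2)\to0$, and fix any $y\in(0,1]$ with $u_1=y$ (admissible once $e^{-M_1}\le y$). The first exponential tends to $(1-B/2)e^{-2\alpha(1-B/2)y}$, the second tends to $0$ because $\alpha_f y\to\infty$, and so $F_{\TS}\to\Phi_{\TS}(y)$. Next take $t_1=t_2=1$. Here $\alpha_f u_1\to0$ and $\alpha_f u_2\to0$, so both exponentials tend to their full weights $1-B/2$ and $B/2$, both $h$ terms vanish, and $F_{\TS}\to0$. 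Together these give $\liminf_{\alpha_f\to\infty}\max F_{\TS}\ge\max(0,\Phi_{\TS}(y))$ for every $y$, which is the required lower bound.

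\textbf{Upper bound.} Fix small $\delta,\delta'>0$ and split the $(u_1,u_2)$ domain into three regions, bounding $F_{\TS}$ uniformly on each.
\begin{enumerate}[label=(\roman*)]
\item If $u_2>\delta$, both exponentials are at most $e^{-\alpha_f B\delta/2}$. The remaining terms satisfy $(1-B/2)h(u_1)+\frac B2h(u_2)-1\le0$ because $h\le1$. So $F_{\TS}\le e^{-\alpha_fB\delta/2}\to0$.
\item If $u_2\le\delta$ and $u_1\ge\delta'$, drop the $u_2$ part of the first exponent. This gives $F_{\TS}\le\Phi_{\TS}(u_1)+\frac B2e^{-\alpha_f(1-B/2)\delta'}+\frac B2h(\delta)$.
\item If $u_2\le\delta$ and $u_1<\delta'$, bound each exponential by its weight. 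This gives $F_{\TS}\le (1-B/2)h(\delta')+\frac B2 h(\delta)$, which tends to $0$ as $\delta,\delta'\to0$.
\end{enumerate}
Letting $\alpha_f\to\infty$ and then $\delta,\delta'\to0$ yields $\limsup_{\alpha_f\to\infty}\max F_{\TS}\le\max\big(0,\sup_y\Phi_{\TS}(y)\big)$.

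The main obstacle is making the upper bound uniform over the whole, growing domain rather than pointwise. The delicate corner is near $u_1,u_2\approx0$, where the exponentials do not decay, and it is handled by the separate small-$(u_1,u_2)$ region (iii), using that $h$ vanishes at $0$. This corner is exactly where the extra value $0$ in the $\max(0,\cdot)$ comes from.
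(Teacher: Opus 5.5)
Your proposal is correct and follows essentially the same route as the paper. It uses the same exponential change of variables and the same lower-bound witness (fix $u_1=y$ and push $u_2$ to the bottom of its range). Your three upper-bound regions correspond exactly to the paper's three cases for a subsequential limit point $(u,v)$ of maximizers: $v>0$; $v=0$ with $u>0$; and $u=v=0$. You make these uniform via explicit $\delta,\delta'$ instead of the paper's compactness argument.

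The only other difference is cosmetic: you obtain the value $0$ in the lower bound from the corner $t_1=t_2=1$, whereas the paper uses $t_1=t_2=0$ (i.e.\ $y_1=y_2=1$). Both choices are valid.
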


By~\Cref{prop:oneside} and~\Cref{prop:twoside}, the asymptotic unmatched fractions as $\alpha_f \to \infty$ are
\begin{align}
    U_{\OS}(\alpha) 
    &\,\triangleq \, \lim_{\alpha_f\to\infty}
             \left(1-\match_{\OS}(B,\alpha,\alpha_f) \right)
    \,=\, \max_{y\in(0,1]}\Phi_{\OS}(y) \, ,  \label{eq:M_os}\\
    U_{\TS}(\alpha) 
    &\,\triangleq \, \lim_{\alpha_f\to\infty}
             \left(1-\match_{\TS}(B,\alpha,\alpha_f) \right)
    \,=\, \max \Big(0,\,\max_{y\in(0,1]}\Phi_{\TS}(y)\Big) \label{eq:L_ts} \, ,
\end{align}
where $\Phi_{\OS}$ and $\Phi_{\TS}$ are as defined in~\Cref{prop:oneside} and~\Cref{prop:twoside} respectively.
We now prove Theorem~\ref{thm:global} and~\Cref{thm:small_large_alpha}.

\subsubsection{Proof of~\texorpdfstring{\Cref{thm:global}}{}}

We begin with the following claim.

\begin{claim}\label{clm:global}
    For any $B\in(0,1)$, there exists $\alpha(B)>0$ such that
    $U_{\TS}(\alpha)=0$ for all $\alpha\geq\alpha(B)$.
    If $B\geq B^*=2-2e/3$, then $\alpha(B)\leq\frac{e}{2-B}$.
\end{claim}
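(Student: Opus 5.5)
The plan is to show directly that $\max_{y\in(0,1]}\Phi_{\TS}(y)\le 0$ for all large $\alpha$, since by~\eqref{eq:L_ts} this gives $U_{\TS}(\alpha)=0$. Write $a\triangleq 1-B/2\in(1/2,1)$ and $k\triangleq 2\alpha a$. Then $\Phi_{\TS}(y)\le 0$ is equivalent to
\[
    h_k(y)\triangleq e^{-ky}+y(1-\ln y)\le \frac1a=\frac{2}{2-B}\qquad\text{for all } y\in(0,1].
\]
Two observations drive everything. First, $h_k$ is pointwise nonincreasing in $k$, hence in $\alpha$. So once the inequality holds at some $\alpha_0$, it holds for every $\alpha\ge\alpha_0$, and we may set $\alpha(B)\triangleq\alpha_0$. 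Second, $y(1-\ln y)$ is increasing on $(0,1]$, with value $1$ at $y=1$ and limit $0$ as $y\to 0$.

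\emph{Existence of $\alpha(B)$.} Set $\varepsilon\triangleq 1/a-1=B/(2-B)>0$, and pick $\delta\in(0,1)$ with $\delta(1-\ln\delta)\le\varepsilon$. We split $(0,1]$ into two ranges.
\begin{itemize}
    \item For $y<\delta$, use $e^{-ky}\le 1$ to get $h_k(y)\le 1+\delta(1-\ln\delta)\le 1/a$.
    \item For $y\ge\delta$, use $y(1-\ln y)\le 1$ to get $h_k(y)\le e^{-k\delta}+1$. This is at most $1/a$ as soon as $k\ge \delta^{-1}\ln(1/\varepsilon)$.
\end{itemize}
This produces a finite $\alpha_0$, and monotonicity in $\alpha$ finishes the first assertion.

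\emph{Explicit bound for $B\ge B^\star$.} Take $\alpha=e/(2-B)$, which gives $k=e$. The hypothesis $B\ge 2-2e/3$ is equivalent to $1/a=2/(2-B)\ge 3/e$. It therefore suffices to prove that $h_e(y)=e^{-ey}+y(1-\ln y)\le 3/e$ on $(0,1]$. Equality holds at $y=1/e$, since $h_e(1/e)=e^{-1}+2/e=3/e$, which explains the constant $B^\star$.

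This step is the main obstacle, because the maximizer is degenerate. Indeed, $h_e'(y)=-\ln y-e\,e^{-ey}$ satisfies $h_e'(1/e)=0$, and $h_e''(1/e)=0$ as well, so a naive second-order argument fails. The plan is to show that $h_e'$ is nonincreasing. Its derivative is $e^2e^{-ey}-1/y$, which is $\le 0$ iff $(ey)\,e^{1-ey}\le 1$. This holds for every $y>0$ because $z e^{1-z}\le 1$ for all $z>0$, with the maximum at $z=1$.

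Hence $h_e'\ge 0$ on $(0,1/e]$ and $h_e'\le 0$ on $[1/e,1]$. So $y=1/e$ is the global maximizer on $(0,1]$, giving $\max h_e=3/e\le 1/a$. Therefore $\Phi_{\TS}\le 0$ at $\alpha=e/(2-B)$. By monotonicity in $\alpha$ the same holds for all larger $\alpha$, so we may take $\alpha(B)\le e/(2-B)$.
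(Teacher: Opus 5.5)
Your proof is correct and follows essentially the paper's route: pointwise monotonicity of $\Phi_{\TS}$ in $\alpha$, then evaluation at $\alpha=e/(2-B)$ (i.e.\ $k=e$), where $y=1/e$ is the global maximizer with value $3/e$. Your $\delta$-splitting makes rigorous the paper's unproved limit $\max_y\Phi_{\TS}(y)\to -B/2$, and your concavity argument via $ze^{1-z}\le 1$ justifies the sign pattern of $h_e'$ that the paper only asserts.

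The one difference is in what $\alpha(B)$ is. The paper uses the intermediate value theorem to define $\alpha(B)$ as the exact zero of $\Psi(\alpha)=\max_y\Phi_{\TS}(y)$, so that $\max_y\Phi_{\TS}>0$ for all $\alpha<\alpha(B)$. The proof of Theorem~\ref{thm:global} tacitly relies on this when it writes $U_{\TS}(\alpha)=\Phi_{\TS}(y_2^\star)$. Your $\alpha(B)$ is only a sufficient threshold. That is harmless for the claim as stated, but downstream one would need the one-line patch ``if $\max_y\Phi_{\TS}\le 0$ then $U_{\TS}=0<U_{\OS}$''.
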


By Claim~\ref{clm:global}, for $\alpha\geq\alpha(B)$, $U_{\TS}(\alpha)=0$.
By \eqref{eq:M_os} and \eqref{eq:L_ts}, \footnote{This demonstrates
that in the $\alpha_f\to\infty$ regime with sufficiently large
$\alpha$, two-sided allocation achieves an almost perfect matching,
while one-sided allocation leaves a constant fraction unmatched.}
\[
    U_{\OS}(\alpha)
    \, \geq \, \Phi_{\OS}(1)
    \, = \,  (1-B)e^{-2\alpha}
    \, > \,  0 = U_{\TS}(\alpha).
\]
It thus suffices to prove $U_{\OS}(\alpha)>U_{\TS}(\alpha)$ for all
$\alpha\in(0,\alpha(B))$. 
Let $y_2^\star$ be the global maximizer of $\Phi_{\TS}$.
By Claim~\ref{clm:global}, $\alpha(B)\leq\frac{e}{2-B}$ for
$B\geq B^*$, so $(2-B)\alpha<e$ for all $\alpha\in(0,\alpha(B))$.
Hence $\Phi_{\TS}$ has a unique critical point satisfying
$\Phi_{\TS}'(y_2^\star)=0$:
\[
    -\ln y_2^\star
    = (2-B)\alpha\,e^{-(2-B)\alpha y_2^\star}.
\]
Indeed, $w\mapsto we^w$ is strictly increasing from $[0,1]$ onto
$[0,e]$, and $(2-B)\alpha\in(0,e)$, so there is a unique
$z\in(0,1)$ with $ze^z=(2-B)\alpha$, and $y_2^\star=e^{-z}$
satisfies the equation:
\[
    (2-B)\alpha\,e^{-(2-B)\alpha e^{-z}}
    = ze^z\cdot e^{-ze^z\cdot e^{-z}}
    = ze^z\cdot e^{-z}
    = z
    = -\ln(e^{-z}).
\]
Since $U_{\OS}(\alpha)\geq\Phi_{\OS}(y_2^\star)$:
\[
    U_{\OS}(\alpha)-U_{\TS}(\alpha)
    \,\geq\, \Phi_{\OS}(y_2^\star)-\Phi_{\TS}(y_2^\star).
\]
Using $y_2^\star=e^{-z}$, $(2-B)\alpha e^{-z}=z$, and $-\ln e^{-z}=z$:
\begin{align*}
    \Phi_{\OS}(e^{-z})
    &= (1-B)e^{-\frac{2}{2-B}z}+e^{-z}(1+z)-1,\\
    \Phi_{\TS}(e^{-z})
    &= \Big(1-\frac{B}{2}\Big)\bigl[e^{-z}+e^{-z}(1+z)\bigr]-1
    = \Big(1-\frac{B}{2}\Big)e^{-z}(2+z)-1.
\end{align*}
Using $(1+z)-(1-B/2)(2+z)=-(1-B)+\frac{B}{2}z$, subtracting gives:
\[
    U_{\OS}(\alpha)-U_{\TS}(\alpha)
    \,\geq\,
    (1-B)\Bigl[e^{-\frac{2}{2-B}z}-e^{-z}\Bigr]+\frac{B}{2}ze^{-z}.
\]
Dividing by $\frac{B}{2}e^{-z}>0$, this is equivalent to $R(z)<z$,
where
\[
    R(z)\,\triangleq\,\frac{2(1-B)}{B}\Bigl[1-e^{-\frac{B}{2-B}z}\Bigr].
\]
We have $R(0)=0$ and $R'(z)=\frac{2(1-B)}{2-B}e^{-\frac{B}{2-B}z}$,
which is strictly decreasing, so
\[
    R'(z)\leq R'(0)=\frac{2(1-B)}{2-B}=1-\frac{B}{2-B}<1
\]
for all $B\in(0,1)$. Hence $R(z)<z$ for all $z>0$, giving
$U_{\OS}(\alpha)>U_{\TS}(\alpha)$.

To transfer this to finite $\alpha_f$, set
$H(\alpha)\triangleq U_{\OS}(\alpha)-U_{\TS}(\alpha)>0$ and
$\varepsilon\triangleq H(\alpha)/3>0$. By
Propositions~\ref{prop:oneside} and~\ref{prop:twoside},
$\max F_{\OS}\to U_{\OS}(\alpha)$ and $\max F_{\TS}\to U_{\TS}(\alpha)$
as $\alpha_f\to\infty$, so there exists $\bar\alpha_f(\alpha,B)$ such
that for all $\alpha_f>\bar\alpha_f(\alpha,B)$:
\[
    \max F_{\OS}-\max F_{\TS}
    \,>\, \bigl(U_{\OS}(\alpha)-\varepsilon\bigr)
         -\bigl(U_{\TS}(\alpha)+\varepsilon\bigr)
    \,=\, \frac{H(\alpha)}{3}>0.
\]

\begin{proof}[Proof of Claim~\ref{clm:global}]
By Proposition~\ref{prop:twoside}, $U_{\TS}(\alpha)=\max(0,\Psi(\alpha))$
where $\Psi(\alpha)\triangleq\max_{y\in(0,1]}\Phi_{\TS}(y)$ is
strictly decreasing in $\alpha$ (since
$\frac{\partial}{\partial\alpha}\Phi_{\TS}(y,\alpha)=
-(1-B/2)(2-B)ye^{-(2-B)\alpha y}<0$ for all $y\in(0,1]$).
At $\alpha=0$, $y=1$ maximizes $\Phi_{\TS}(\cdot,0)$ giving
$\Psi(0)=1-B>0$. As $\alpha\to\infty$, $\Psi(\alpha)\to -B/2<0$.
By the intermediate value theorem, there exists a unique
$\alpha(B)\in(0,\infty)$ with $\Psi(\alpha(B))=0$, so
$U_{\TS}(\alpha)=0$ for all $\alpha\geq\alpha(B)$.

For $B\geq B^*=2-2e/3$, we evaluate $\Psi$ at $\frac{e}{2-B}$.
The inner function $g(y)\triangleq e^{-ey}+y(1-\ln y)$ satisfies
$g'(y)=-ee^{-ey}-\ln y=0$ at $y=1/e$, with $g'>0$ on $(0,1/e)$
and $g'<0$ on $(1/e,1]$, so $y=1/e$ is the unique maximizer,
giving $g(1/e)=\frac{3}{e}$. Thus
\[
    \Psi \Big(\frac{e}{2-B}\Big)
    = \frac{3(2-B)}{2e}-1
    \,\leq\, 0 = \Psi(\alpha(B)),
\]
where the inequality uses $3(2-B)\leq 2e$. Since $\Psi$ is
strictly decreasing, $\alpha(B)\leq\frac{e}{2-B}$.
\end{proof}

\subsubsection{Proof of~\texorpdfstring{\Cref{thm:small_large_alpha}}{}}
\label{sec:small_large_alpha}
Fix any $B\in(0,1)$. For both regimes, we establish
$U_{\OS}(\alpha)>U_{\TS}(\alpha)$ for the relevant range of
$\alpha$; the transfer to finite $\alpha_f$ is then analogous
to the proof of~\Cref{thm:global} and hence omitted.

\paragraph{(i) (Small-$\alpha$ regime.)} We first introduce the following claim. 
        \begin{claim}\label{clm:uniquemaxphiTS}
         For $2\alpha(1-B/2)<e$, $\Phi_{\TS}(y)$ admits a unique maximizer
            $y^*\in(0,1)$ over $[0,1]$, satisfying
            \[
                \ln y^* = -2\alpha\Big(1-\frac{B}{2}\Big)
                e^{-2\alpha(1-B/2)y^*}
                \qquad\text{and}\qquad
                y^* = e^{-2\alpha(1-B/2)y^*}.
            \]
If $\max_{y\in(0,1]}\Phi_{\rm TS}(y)\le0$, then $U_{\rm TS}(\alpha)=0$, while
$U_{\rm OS}(\alpha)\ge \Phi_{\rm OS}(1)=(1-B)e^{-2\alpha}>0$, and we are done. We may therefore assume $\max_y\Phi_{\rm TS}(y)>0$, so that $U_{\rm TS}(\alpha)=\Phi_{\rm TS}(y^*)$. 

By~\Cref{eq:M_os} and \prettyref{eq:L_ts}, given that $y^*$ is the unique maximizer of $\Phi_{\TS}(y)$ over  $[0,1]$ by~\Cref{clm:uniquemaxphiTS}, we obtain 
    \begin{align*}
        U_{\OS}(\alpha) \!- \! U_{\TS}(\alpha)
        &\geq \Phi_{\OS}(y^*)-\Phi_{\TS}(y^*)\\
        &= (1-B)e^{-2\alpha y^*}+y^*(1 \!-\! \ln y^*) \!-\! 1
          \!-\! \Big[\Big(1-\frac{B}{2}\Big)\big(e^{-2\alpha(1-B/2)y^*}
          +y^*(1 \!-\! \ln y^*)\big) \!-\! 1\Big] \\
          & =  (1-B)e^{-2\alpha y^*}
        -\Big(1-\frac{B}{2}\Big)e^{-2\alpha(1-B/2)y^*}
        +\frac{B}{2}y^*(1-\ln y^*) \\
          &= y^*\Bigl[(1-B)e^{\frac{B/2}{1-B/2}\ln y^*}
      -(1-B/2)+\frac{B}{2}(1-\ln y^*)\Bigr]\\
    &= y^*\Bigl[(1-B)\bigl(e^{\frac{B/2}{1-B/2}\ln y^*}-1\bigr)
      -\frac{B}{2}\ln y^*\Bigr]
    \,\triangleq\, y^*\Delta(y^*) \,,
    \end{align*}
    where the first equality holds by \prettyref{eq:Phi_os} and \prettyref{eq:Phi_ts}, the third equality holds by $(1-B/2)e^{-2\alpha(1-B/2)y^*} =(1-B/2)y^*$ and $e^{-2\alpha y^*}=(y^*)^{1/(1-B/2)} =y^*e^{\frac{B/2}{1-B/2}\ln y^*}$ by \Cref{clm:uniquemaxphiTS}.

    It remains to show $\Delta(y^*)>0$.
    Using $e^x-1>x$ for $x\neq 0$ with $x=\frac{B}{2-B}\ln y^*$:
    \[
        \Delta(y^*)
        > \Bigl(\frac{B(1-B)}{2-B}-\frac{B}{2}\Bigr)\ln y^*
        = -\frac{B^2}{2(2-B)}\ln y^*
        > 0,
    \]
    since $y^*\in(0,1)$ gives $\ln y^*<0$.
    Hence, we have $U_{\OS}(\alpha)-U_{\TS}(\alpha)>0$ for all $\alpha<\frac{e}{2-B}\triangleq \alpha_{\mathrm{low}}(B)$.
    \end{claim}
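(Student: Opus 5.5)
The plan is a first-derivative analysis of $\Phi_{\TS}$ from~\eqref{eq:Phi_ts}. Throughout, set $\kappa \triangleq 2\alpha(1-B/2) = (2-B)\alpha$, so the hypothesis reads $0<\kappa<e$; I take $\alpha>0$, which is the regime of \Cref{thm:small_large_alpha}(i). Then
\[
    \Phi_{\TS}(y) = \Big(1-\frac{B}{2}\Big)\big[e^{-\kappa y} + y(1-\ln y)\big] - 1 .
\]
This extends continuously to $y=0$ with value $(1-B/2)-1$, since $y\ln y\to 0$. A direct computation gives
\[
    \Phi_{\TS}'(y) = \Big(1-\frac{B}{2}\Big)\, h(y), \qquad h(y) \triangleq -\ln y - \kappa e^{-\kappa y},
\]
because $\frac{d}{dy}\,y(1-\ln y) = -\ln y$. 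So everything reduces to locating the sign changes of $h$ on $(0,1]$.

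First, I will show that $h$ is strictly decreasing on $(0,1]$. We have $h'(y) = -1/y + \kappa^2 e^{-\kappa y}$, so $h'(y)<0$ if and only if $y\kappa^2 e^{-\kappa y} < 1$. The map $y\mapsto y e^{-\kappa y}$ is maximized over $y>0$ at $y=1/\kappa$, where it equals $1/(\kappa e)$. Hence $y\kappa^2 e^{-\kappa y}\le \kappa/e<1$, and this is exactly where the hypothesis $\kappa<e$ enters.

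Second, I will check the boundary behaviour: $h(0^+)=+\infty$, and $h(1) = -\kappa e^{-\kappa}<0$ since $\kappa>0$. Together with strict monotonicity, this shows $h$ has exactly one zero $y^*\in(0,1)$, with $h>0$ on $(0,y^*)$ and $h<0$ on $(y^*,1]$. Therefore $\Phi_{\TS}$ is strictly increasing on $[0,y^*]$ and strictly decreasing on $[y^*,1]$, so $y^*$ is its unique maximizer over $[0,1]$, including the endpoint $0$. The condition $h(y^*)=0$ is precisely the first stated identity, $\ln y^* = -\kappa e^{-\kappa y^*}$.

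Third, for the second identity I will avoid manipulating the first one directly and instead use uniqueness. The function $y\mapsto y - e^{-\kappa y}$ is strictly increasing, equals $-1$ at $0$ and $1-e^{-\kappa}>0$ at $1$, so it has a unique root $y_0\in(0,1)$. At this root, $-\ln y_0 = \kappa y_0 = \kappa e^{-\kappa y_0}$, so $h(y_0)=0$. By uniqueness of the zero of $h$, $y_0=y^*$, which gives $y^* = e^{-\kappa y^*}$.

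The only genuinely delicate step is the uniqueness of the critical point, i.e.\ the bound $\sup_y y\kappa^2e^{-\kappa y}\le\kappa/e<1$. I expect this to be the main point, since it is where the threshold $\kappa<e$ is used and it fails beyond it. The remaining steps are routine sign checks.
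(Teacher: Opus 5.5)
Your proof is correct and essentially the paper's: the bound $y\kappa^2e^{-\kappa y}\le\kappa/e<1$ is the paper's strict-concavity step for $\Phi_{\TS}$ (via $xe^{-x}\le 1/e$), followed by the same boundary sign check and the same uniqueness argument identifying the root of $y-e^{-\kappa y}$ with the critical point. Your explicit assumption $\alpha>0$, which is needed for $\Phi_{\TS}'(1)<0$, and your continuous extension at $y=0$ spell out details the paper leaves implicit.
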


    \paragraph{(ii) Large $\alpha$ regime.} Evaluating $\Phi_{\OS}$
    at $y=1$:
    \[
        U_{\OS}(\alpha)
        \geq \Phi_{\OS}(1)
        = (1-B)e^{-2\alpha}
        > 0
        \qquad\text{for all }\alpha\geq 0.
    \]
    By~\Cref{clm:global}, there exists $\alpha(B)>0$ such that
    $U_{\TS}(\alpha)=0$ for all $\alpha>\alpha(B)$. Hence for any
    $\alpha>\alpha(B)$, $U_{\OS}(\alpha)\geq(1-B)e^{-2\alpha}>0$
    while $U_{\TS}(\alpha)=0$. The transfer argument applies with
    $U_{\OS}(\alpha)-U_{\TS}(\alpha)=U_{\OS}(\alpha)>0$,
    completing the proof.
    We are left to prove our claim. 
    \begin{proof}[Proof of~\Cref{clm:uniquemaxphiTS}]
    Direct differentiation gives
    \begin{align*}
        \Phi_{\TS}'(y)
        &= -\Big(1-\frac{B}{2}\Big)\Big[2\alpha\Big(1-\frac{B}{2}\Big)
           e^{-2\alpha(1-B/2)y}+\ln y\Big],\\
        \Phi_{\TS}''(y)
        &= \Big(1-\frac{B}{2}\Big)\Big[4\alpha^2\Big(1-\frac{B}{2}\Big)^2
           e^{-2\alpha(1-B/2)y}-\frac{1}{y}\Big].
    \end{align*}
    Setting $x=2\alpha(1-B/2)y$ gives
    $\Phi_{\TS}''(y)=\frac{1-B/2}{y}[2\alpha(1-B/2)xe^{-x}-1]$.
    Since $xe^{-x}\leq 1/e$ for all $x\geq 0$ and $2\alpha(1-B/2)<e$,
    we have $2\alpha(1-B/2)xe^{-x}<1$, so $\Phi_{\TS}''(y)<0$ and
    $\Phi_{\TS}$ is strictly concave on $(0,1]$.
    Since $\lim_{y\to 0^+}\Phi_{\TS}'(y)=+\infty$ and
    $\Phi_{\TS}'(1)=-2\alpha(1-B/2)^2e^{-2\alpha(1-B/2)}<0$,
    the intermediate value theorem gives a unique $y^*\in(0,1)$
    with $\Phi_{\TS}'(y^*)=0$, i.e.\
    \[
        \ln y^* = -2\alpha\Big(1-\frac{B}{2}\Big)e^{-2\alpha(1-B/2)y^*}.
    \]
    It remains to show $y^*=e^{-2\alpha(1-B/2)y^*}$.
    The function $g(y)\triangleq y-e^{-2\alpha(1-B/2)y}$ satisfies
    $g(0)=-1<0$, $g(1)=1-e^{-2\alpha(1-B/2)}>0$, and
    $g'(y)=1+2\alpha(1-B/2)e^{-2\alpha(1-B/2)y}>0$,
    so by the intermediate value theorem there exists a unique
    $\tilde{y}\in(0,1)$ with $\tilde{y}=e^{-2\alpha(1-B/2)\tilde{y}}$.
    This $\tilde{y}$ satisfies $\Phi_{\TS}'(\tilde{y})=0$ since
    \[
        \ln\tilde{y}
        = -2\alpha\Big(1-\frac{B}{2}\Big)\tilde{y}
        = -2\alpha\Big(1-\frac{B}{2}\Big)e^{-2\alpha(1-B/2)\tilde{y}},
    \]
    so by uniqueness of the zero of $\Phi_{\TS}'$, we have $\tilde{y}=y^*$.
\end{proof}

\section{Simulations}\label{sec:simulation}

\begin{figure}[t]
    \centering
    \includegraphics[width=0.7\linewidth]{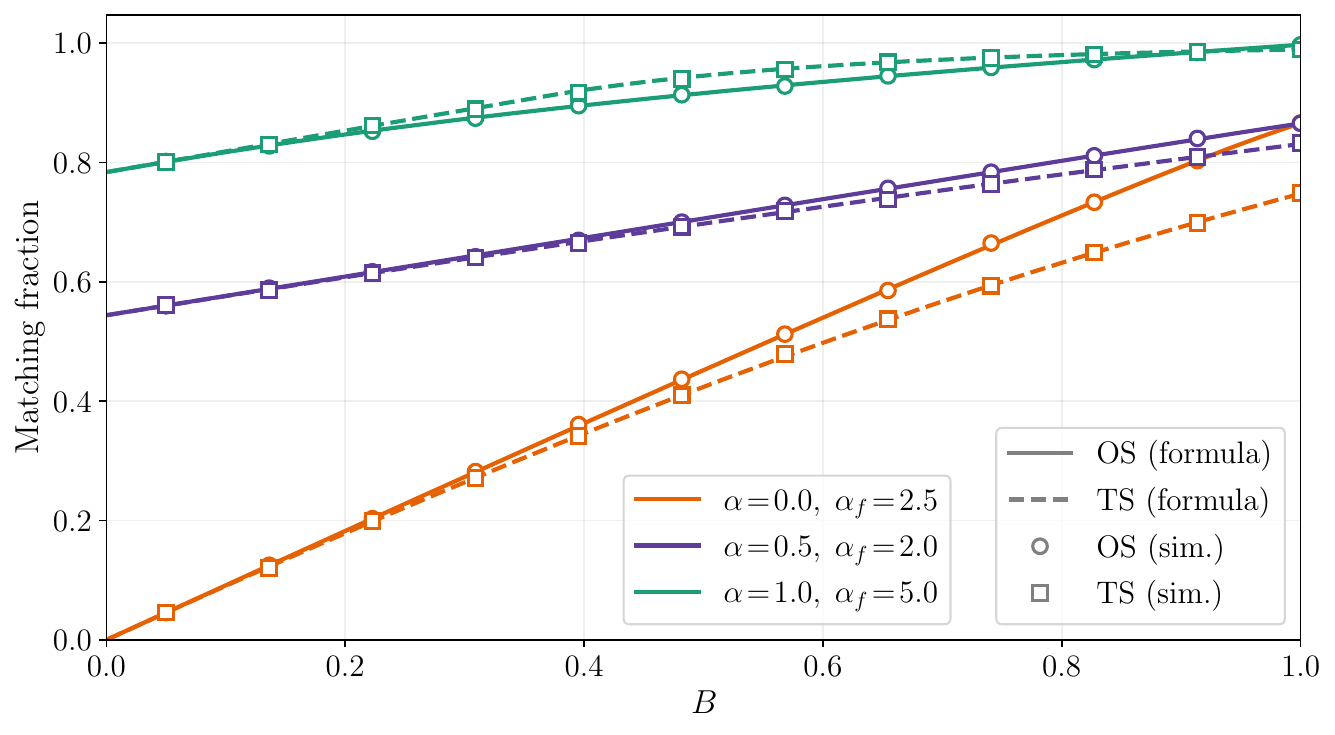}
    \caption{Matching fraction $\match_{\OS/\TS}(B, \alpha, \alpha_f)$ against $B$ (formula vs simulation) for various $(\alpha,\alpha_f)$.}
    \label{fig-comparison}
\end{figure}

This section validates the closed-form formulas of Corollaries~\ref{cor:matching_size_one}--\ref{cor:matching_size_two} by simulating the dominance landscape across the full parameter space $(\alpha, \alpha_f, B)$. 
The variational characterization makes this computationally tractable: rather than running a matching algorithm for each parameter configuration, we numerically solve the low-dimensional optimization problems
\[
    \match_{\OS}(B,\alpha,\alpha_f) = 1 - \max_{t_1\in[0,1]} F_{\OS}(t_1) \, ,
    \qquad
    \match_{\TS}(B,\alpha,\alpha_f) = 1 - \max_{t_1,t_2\in[0,1]^2} F_{\TS}(t_1,t_2) \, ,
\]
and compute the advantage ratio
\[
    \Adv(\alpha, \alpha_f, B)  \, \triangleq \,
    \frac{\match_{\OS}(B,\alpha,\alpha_f) }{ \match_{\TS}(B,\alpha,\alpha_f) } \, ,
\]
$\Adv > 1$ indicates one-sided dominance and $\Adv < 1$ indicates two-sided dominance. This formulation allows us to sweep efficiently over the parameter space and locate the boundaries between dominance regimes. We organize the results as follows: Figure~\ref{fig-comparison} validates the formulas against simulation; Figure~\ref{fig-B} traces $\Adv$ as a function of~$\alpha_f$ for varying~$\alpha$, at flexibility budgets $B=1$ and $B=0.6$; Figure~\ref{fig-Adv-vs-B} traces $\Adv$ as a function of~$B$; Figure~\ref{fig-Adv-alpha0} traces $\Adv$ when $\alpha = 0$, and Figure~\ref{fig-heatmap} maps the full dominance regions in the $(\alpha_f - \alpha,\,\alpha)$ plane across different flexibility budgets $B$. Code for all simulations is available online\footnote{\url{https://github.com/taha-ameen/flexibility-in-bipartite-sbm}}.

\begin{figure}[t]
    \centering
    \subfigure[$B=1$, $\alpha \leq 1$]{
    \includegraphics[width = 0.47\textwidth]{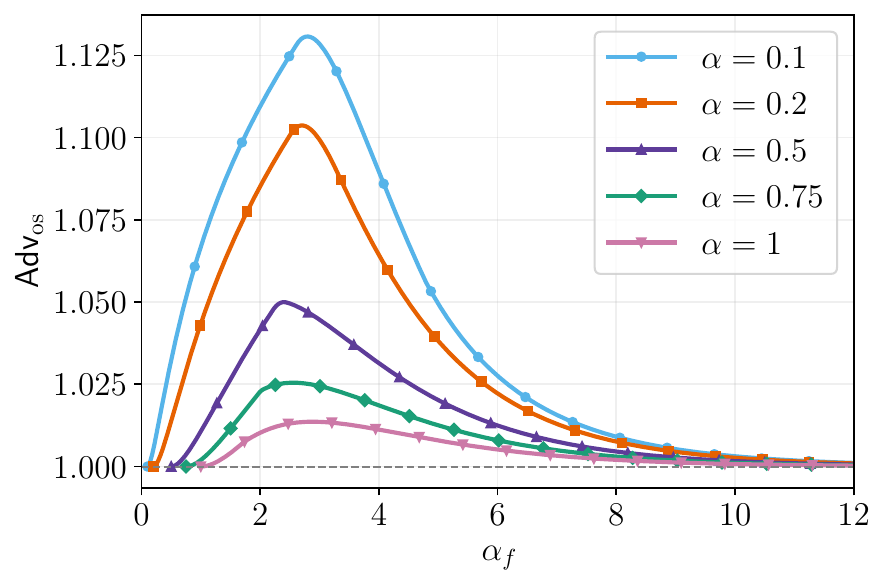}
    }
    \hfill
    \subfigure[$B=1$, $\alpha \geq 1$]{
    \includegraphics[width = 0.47\textwidth]{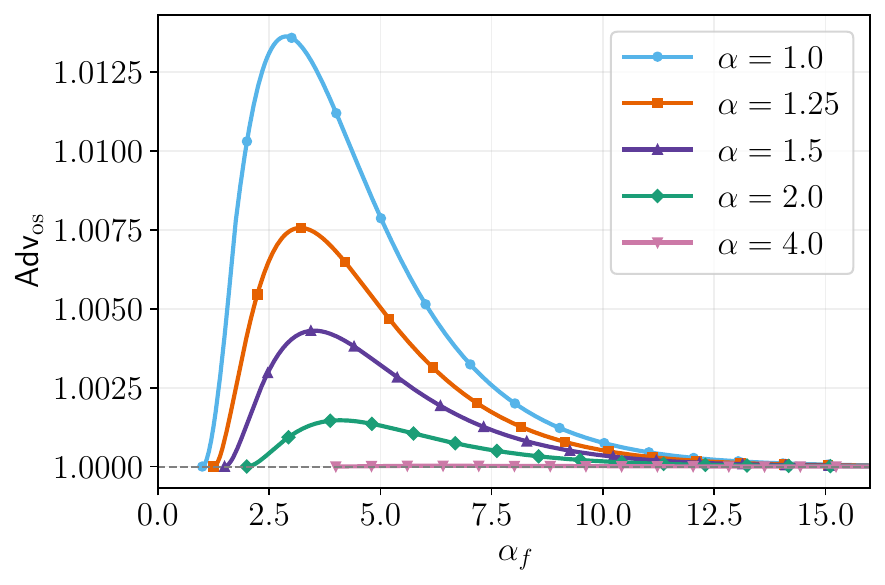}
    }
    \vfill
    \subfigure[$B=0.6$, $\alpha \leq 1$]{
    \includegraphics[width = 0.47\textwidth]{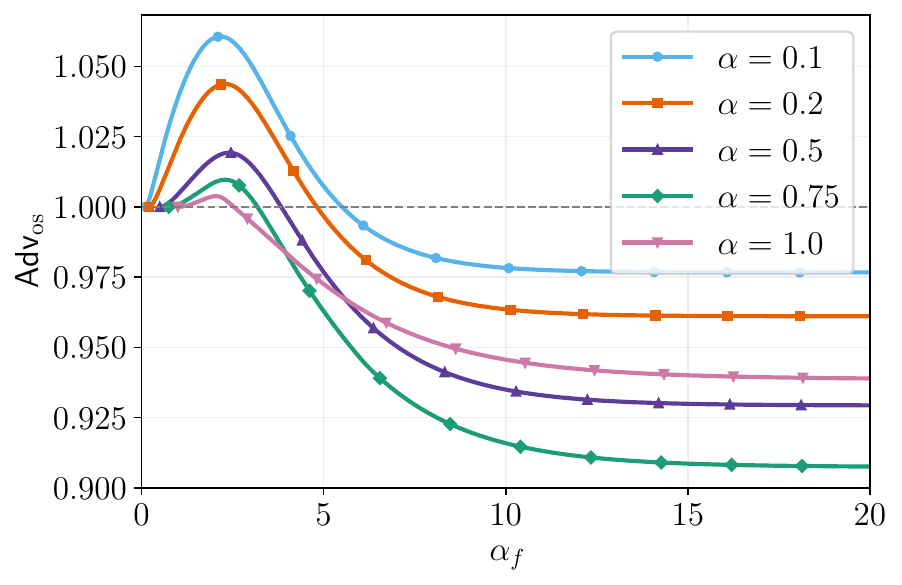}
    }
    \hfill
    \subfigure[$B=0.6$, $\alpha \geq 1$]{
    \includegraphics[width = 0.47\textwidth]{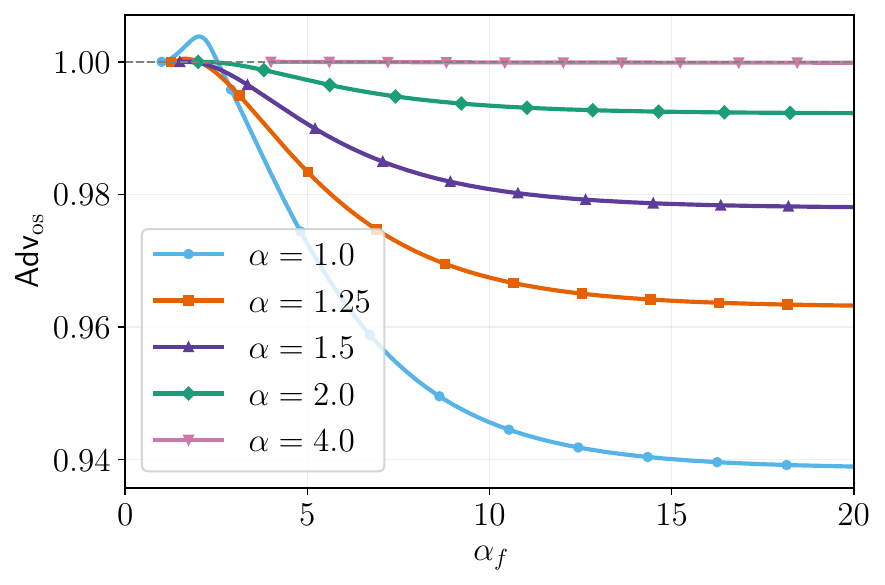}
    }
    \caption{$\Adv$ against $\alpha_f$ for various $\alpha$, at $B=1$ (top) and $B=0.6$ (bottom).}
    \label{fig-B}
\end{figure}

\paragraph{Empirical validation.}
Figure~\ref{fig-comparison} validates the analytical matching rate of~\Cref{thm:matching-rate} against simulation. For three different values of $(\alpha, \alpha_f)$, we plot the matching fraction under both the one-sided allocation (solid lines) and the two-sided allocation (dashed lines) as a function of the flexibility budget $B \in [0,1]$. Simulation points are obtained via the Hopcroft--Karp algorithm on random instances of the bipartite stochastic block model with $n = 10^4$ nodes per side, averaged over 10 trials. The agreement between formula and simulation across all cases confirms the validity of the formulas in~\Cref{cor:matching_size_one} and~\Cref{cor:matching_size_two}.

The three cases in~\Cref{fig-comparison} illustrate qualitatively distinct dominance regimes. When $\alpha = 0$ and $\alpha_f = 2.5$, the one-sided allocation strictly outperforms the two-sided allocation for all~$B$, consistent with~\Cref{thm:alpha_0}. The moderate case $\alpha = 0.5$, $\alpha_f = 2$ exhibits a similar pattern, though the gap between the two allocations is considerably smaller. In contrast, for $\alpha = 1$ and $\alpha_f = 5$ (large flexibility premium), the two-sided allocation dominates over most of the flexibility budget range, with the one-sided allocation recovering only near $B = 1$. This reversal reflects the interplay identified in~\Cref{thm:small_large_alpha}: when $\alpha > 0$ and $\alpha_f$ is sufficiently large, the asymmetry effect outweighs flexibility cannibalization, favoring the two-sided allocation; the recovery at $B = 1$ is guaranteed by~\Cref{thm:B_1}.

\paragraph{Plotting $\Adv$ against $\alpha_f$.}

Figure~\ref{fig-B} plots $\Adv$ as a function of $\alpha_f$ for several values of $\alpha$, at $B = 1$ (top row) and $B = 0.6$ (bottom row). Each row splits the parameter range into two panels: $\alpha \leq 1$ (left) and $\alpha \geq 1$ (right).

When $B = 1$ (top row), the one-sided allocation dominates uniformly: $\Adv \geq 1$ for all tested values of $\alpha$ and $\alpha_f$, consistent with~\Cref{thm:B_1}. Two further patterns emerge. First, for each fixed $\alpha$, $\Adv$ is non-monotone in $\alpha_f$: it rises to a peak and then decays back toward~$1$ as $\alpha_f \to \infty$. The peak reflects the regime where the cannibalization effect is strongest relative to total matching volume; as $\alpha_f$ grows further, both allocations approach a perfect matching of the flexible nodes and the advantage vanishes. Second, the peak value of $\Adv$ is decreasing in $\alpha$. This is intuitive: a larger baseline rate $\alpha$ means that even regular--regular pairs connect at a reasonable rate, reducing the marginal value of concentrating flexibility on one side. Note also the difference in scale between the two panels: the advantage is an order of magnitude smaller for $\alpha \geq 1$, where the baseline connectivity already ensures a high matching rate.

When $B = 0.6$ (bottom row), the picture changes qualitatively. In the left panel ($\alpha \leq 1$), $\Adv$ again starts above~$1$ for small $\alpha_f$, but now crosses below~$1$ as $\alpha_f$ grows, indicating a transition to two-sided dominance. The crossover occurs at smaller $\alpha_f$ for larger $\alpha$, consistent with the intuition that a higher baseline rate amplifies the asymmetry effect. This sign change is predicted by~\Cref{thm:global,thm:small_large_alpha}. In the right panel ($\alpha \geq 1$), $\Adv$ is below~$1$ for most of the regime $\alpha_f > \alpha$, confirming that for larger baseline rates the asymmetry effect dominates even at moderate flexibility premiums. 

In summary,the comparison between the two rows underscores the role of the flexibility budget~$B$: at full budget, one-sided dominance prevails (\Cref{thm:B_1}); below full budget, the asymmetry effect can reverse dominance, with the reversal occurring more readily when $\alpha$ is larger.

\paragraph{Plotting $\Adv$ against $B$.}

\begin{figure}[t]
    \subfigure[Moderate flexibility premium ($\alpha_f/\alpha \leq 2$)]{
    \includegraphics[width = 0.47\textwidth]{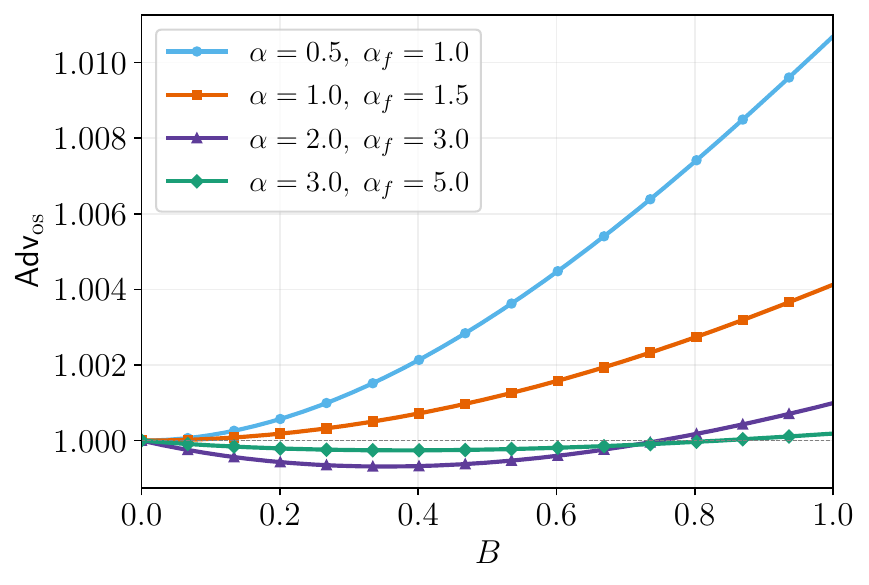}
    \label{fig-Adv-vs-B-1}
    }
    \hfill
    \subfigure[Large flexibility premium ($\alpha_f/\alpha \geq 4$)]{
    \includegraphics[width = 0.47\textwidth]{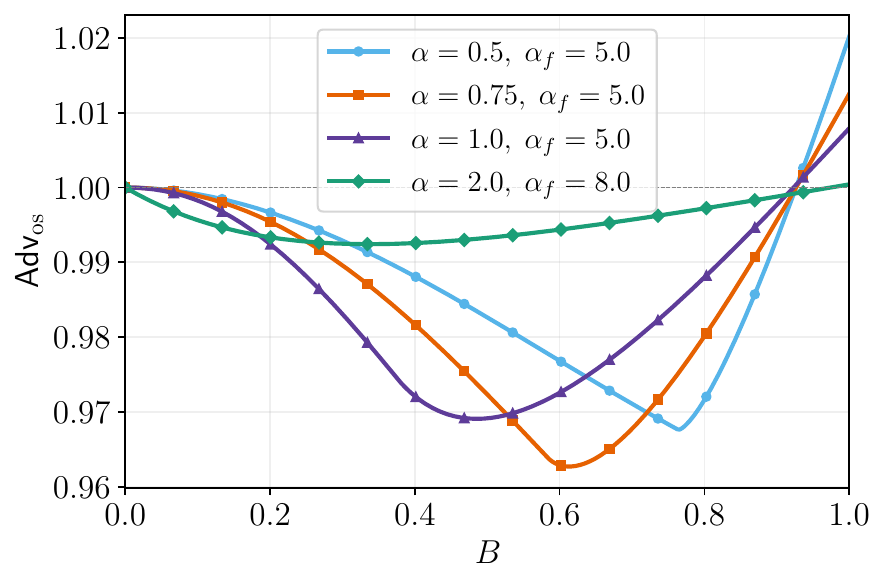}
    }
    \caption{Plotting $\Adv$ against $B$ for various $\alpha,\alpha_f$}
    \label{fig-Adv-vs-B}
\end{figure}

\begin{figure}[t]
    \subfigure[Plotting against $\alpha_f$]{
    \includegraphics[width = 0.47\textwidth]{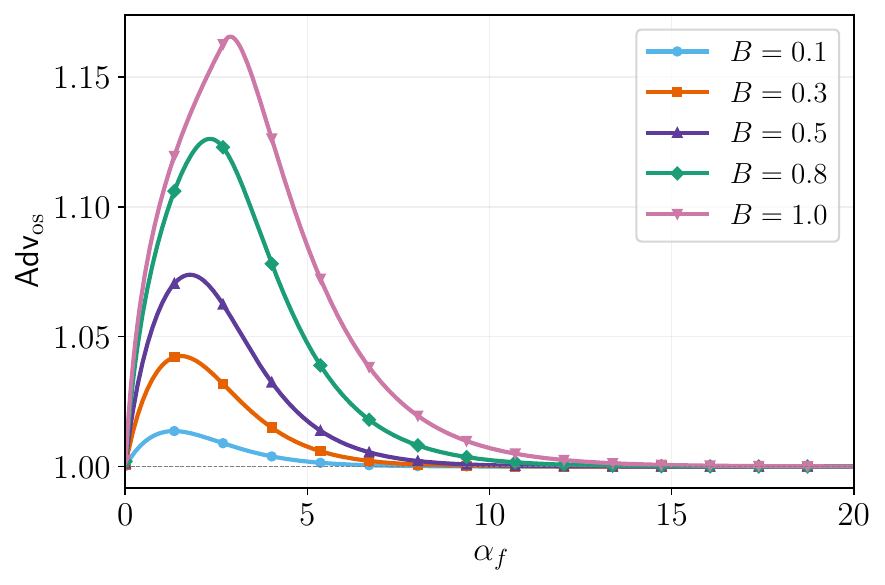}
    \label{fig-alpha0-1}
    }
    \hfill
    \subfigure[Plotting against $B$]{
    \includegraphics[width = 0.47\textwidth]{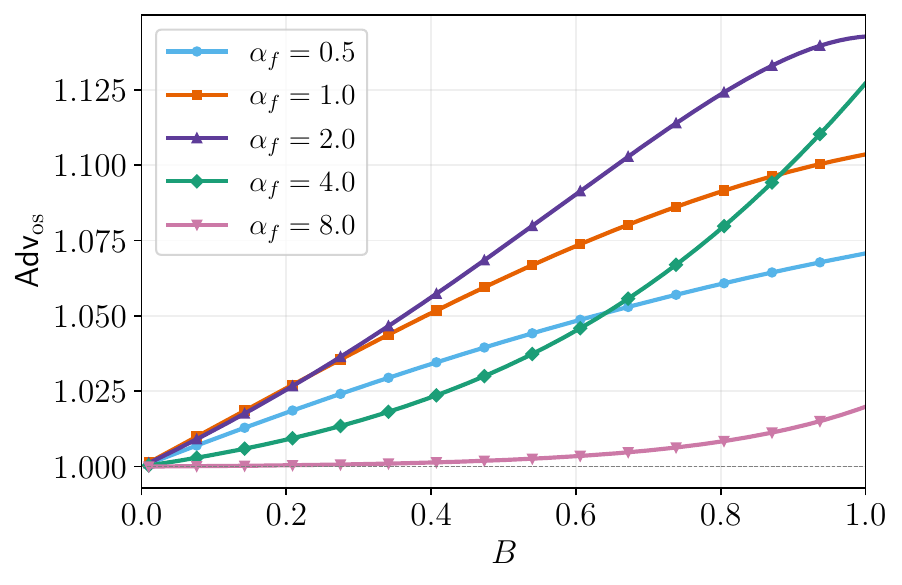}
    \label{fig-alpha0-2}
    }
    \caption{Plotting $\Adv$ against $\alpha_f$ and $B$ when $\alpha = 0$}
    \label{fig-Adv-alpha0}
\end{figure}

\Cref{fig-Adv-vs-B} plots $\Adv$ against the flexibility budget~$B$. The two panels share a common feature: for every parameter pair tested, $\Adv$ is increasing near $B = 1$ and satisfies $\Adv \geq 1$ at $B = 1$, consistent with~\Cref{thm:B_1}. Away from full flexibility budget, however, the comparison depends on both the flexibility premium and the budget level. Even at moderate ratios $\alpha_f/\alpha$ (panel~(a)), some parameter pairs exhibit $\Adv < 1$ at small~$B$ before crossing back above~$1$, showing that one-sided dominance need not hold uniformly across flexibility budgets. This effect is amplified at larger ratios (panel~(b)), where the two-sided allocation dominates over most of the flexibility budget range. Together, the two panels highlight that the optimal allocation is not determined by the flexibility premium $\alpha_f$ alone, and the available flexibility budget~$B$ plays an important role.

\paragraph{Plotting $\Adv$ when $\alpha = 0$.}

Figure~\ref{fig-Adv-alpha0} plots $\Adv$ when $\alpha = 0$ for various values of $B$ and $\alpha_f$. In every case $\Adv > 1$, confirming that the one-sided allocation dominates~(\Cref{thm:alpha_0}). Panel~(a) shows that the advantage is non-monotone in $\alpha_f$, peaking at an intermediate value before decaying back to~$1$. Panel~(b) shows that the advantage is monotonically increasing in the flexibility budget~$B$: a larger budget amplifies the benefit of concentrating flexibility on one side. Together, the two panels show that the one-sided advantage can be substantial (exceeding $15\%$ at $B = 1$) but is transient in $\alpha_f$, concentrated in the regime where flexibility is valuable but not yet abundant enough to make matching trivial.

\paragraph{Dominance regions.}

\begin{figure}[t]
    \subfigure[$B = 0.1$]{
    \includegraphics[width = 0.31\textwidth]{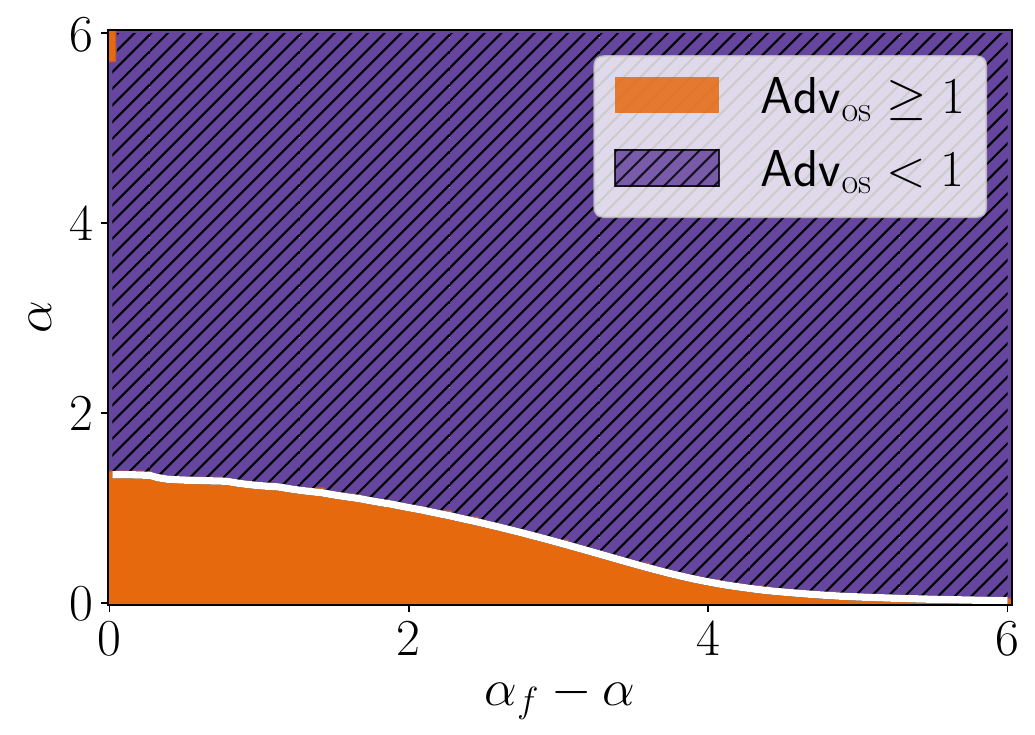}
    }
    \hfill
    \subfigure[$B = 0.2$]{
    \includegraphics[width = 0.31\textwidth]{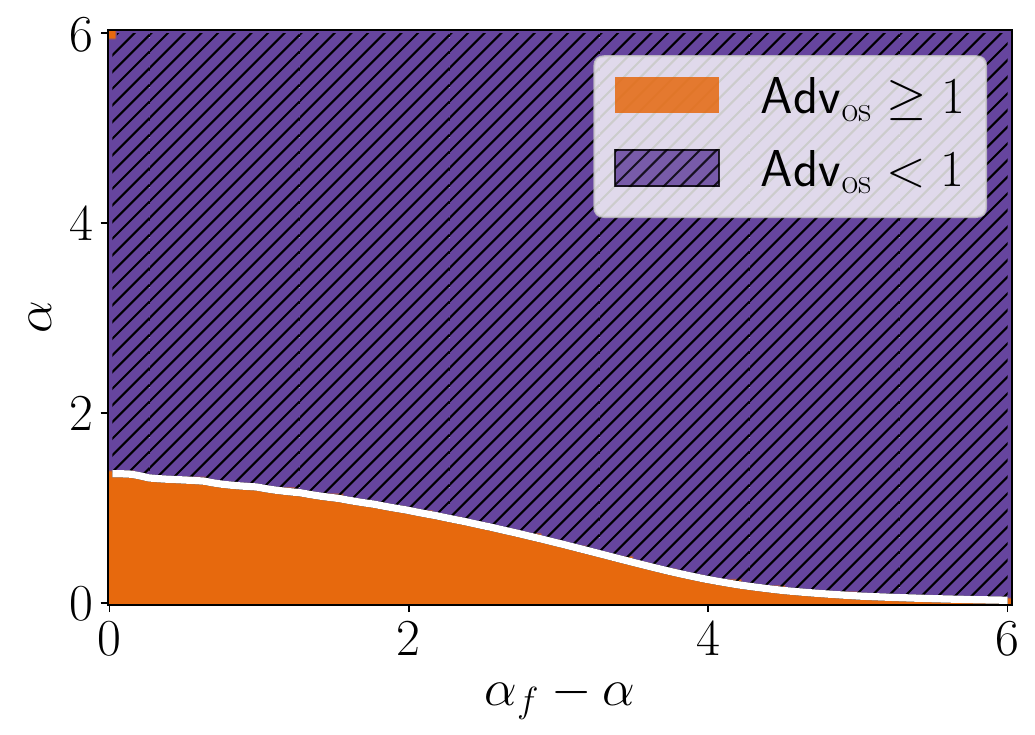}
    }
    \hfill
    \subfigure[$B = 0.4$]{
    \includegraphics[width = 0.31\textwidth]{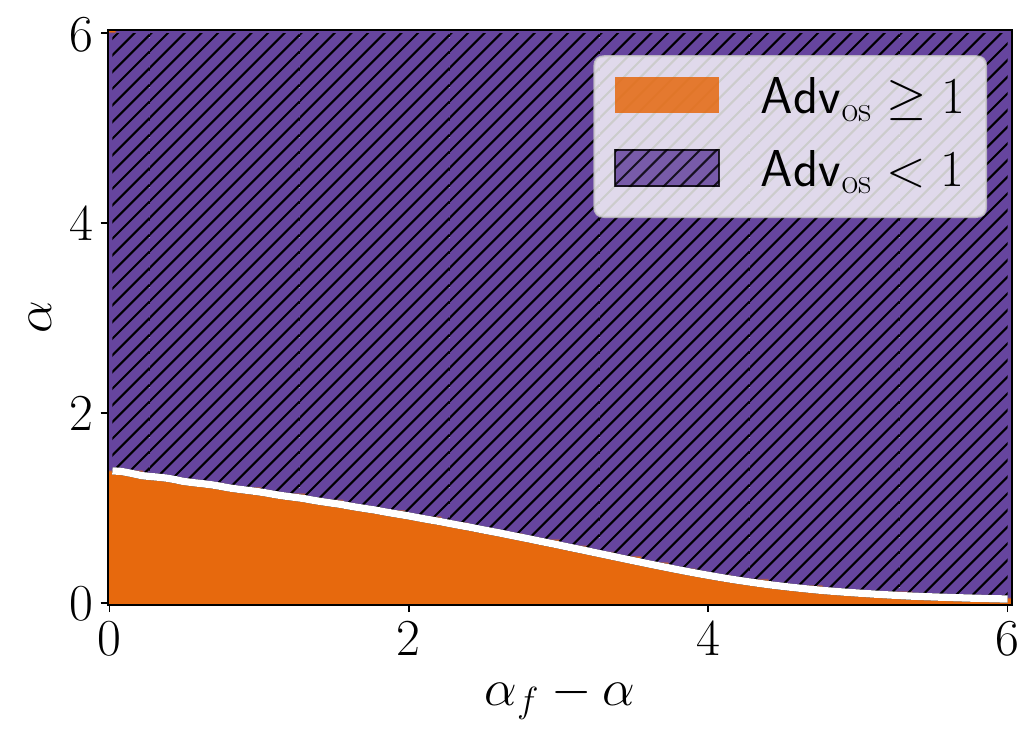}
    }
    \vfill
    \subfigure[$B = 0.5$]{
    \includegraphics[width = 0.31\textwidth]{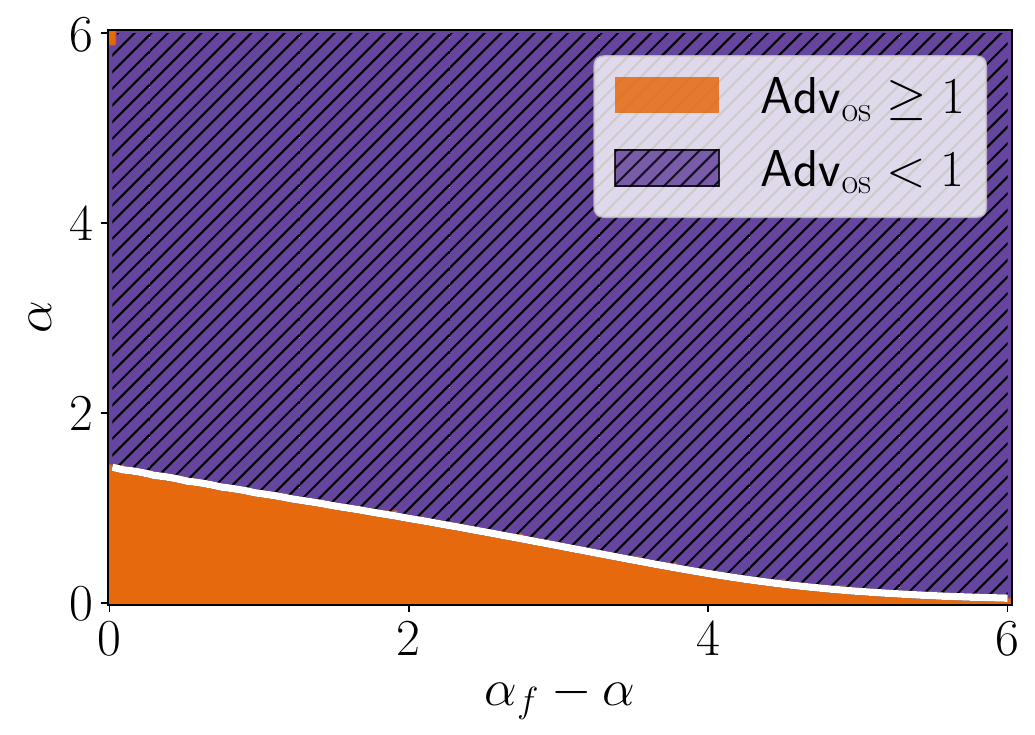}
    }
    \hfill    
    \subfigure[$B = 0.6$]{
    \includegraphics[width = 0.31\textwidth]{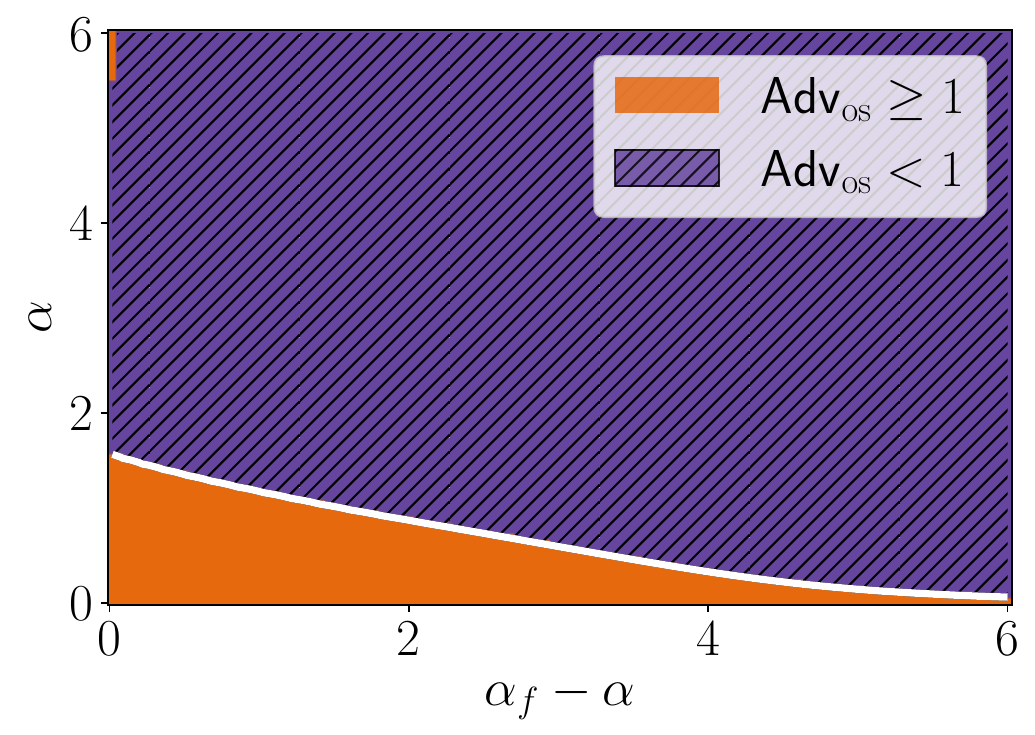}
    }
    \hfill
    \subfigure[$B = 0.7$]{
    \includegraphics[width = 0.31\textwidth]{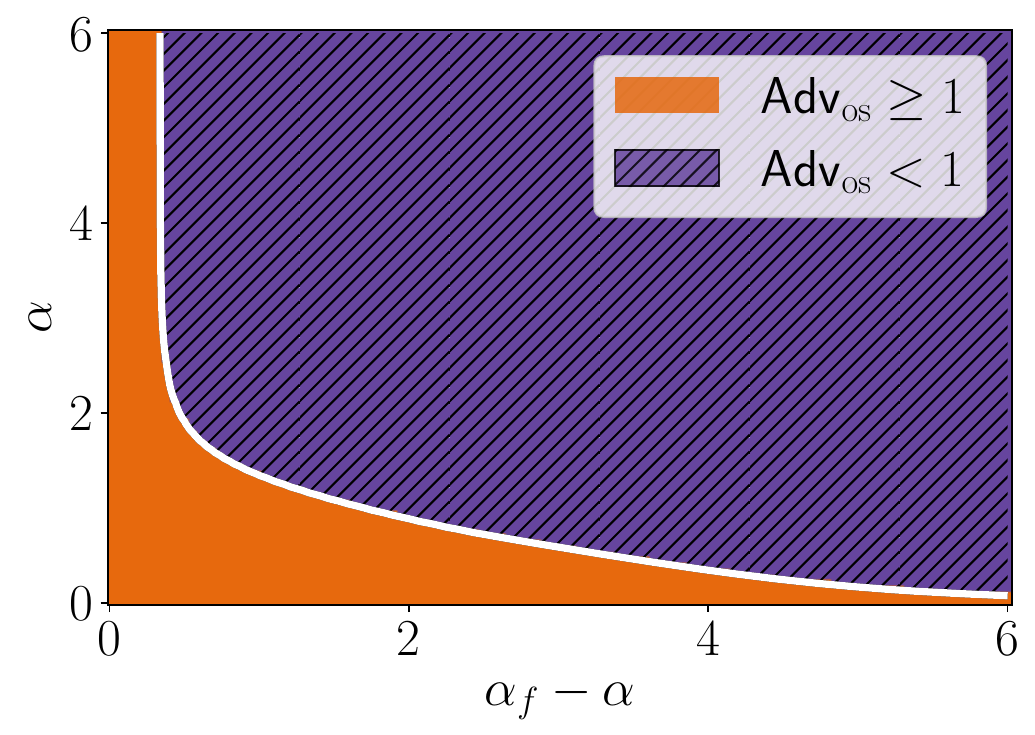}
    }
    \vfill
    \subfigure[$B = 0.8$]{
    \includegraphics[width = 0.31\textwidth]{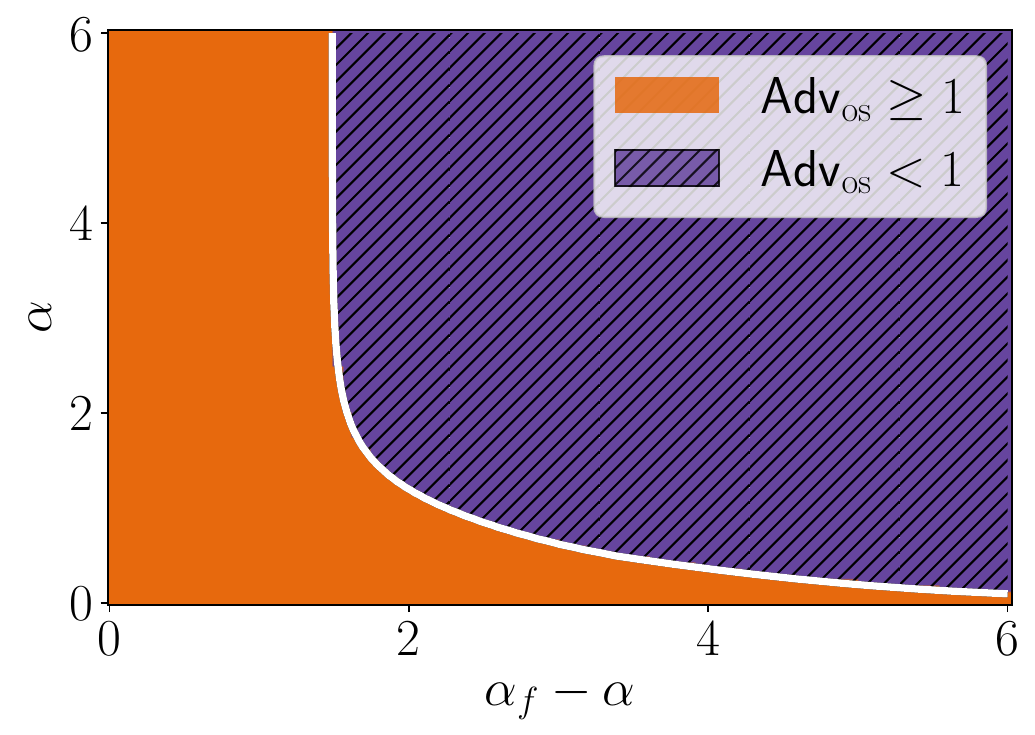}
    }
    \hfill    
    \subfigure[$B = 0.9$]{
    \includegraphics[width = 0.31\textwidth]{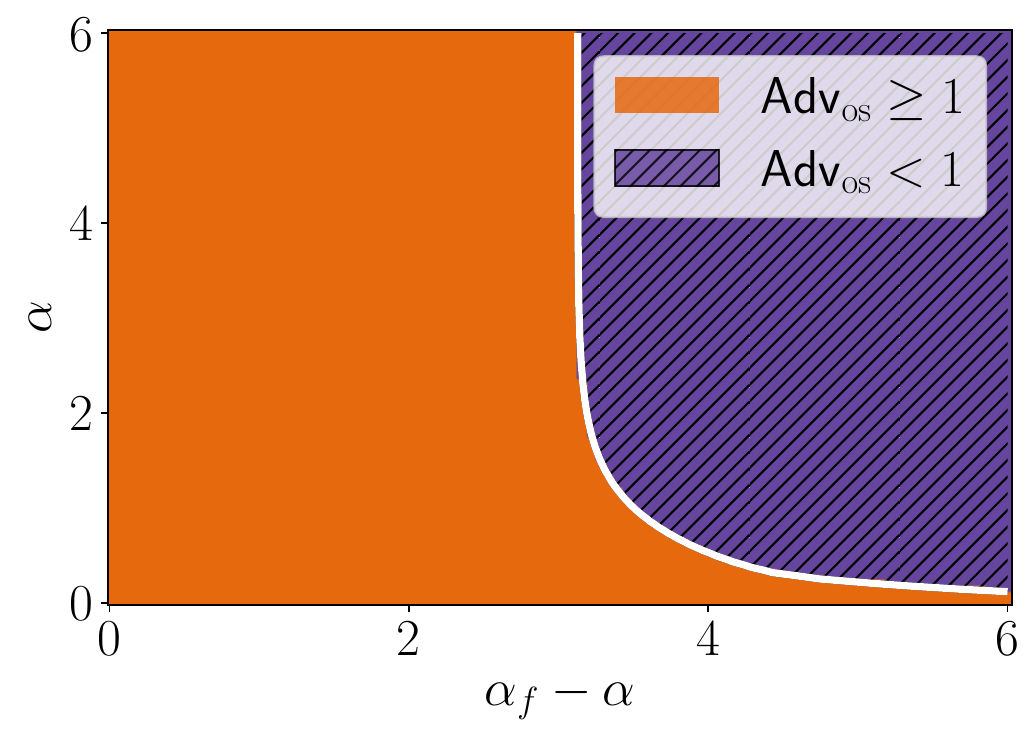}
    }
    \hfill
    \subfigure[$B = 1.0$]{
    \includegraphics[width = 0.31\textwidth]{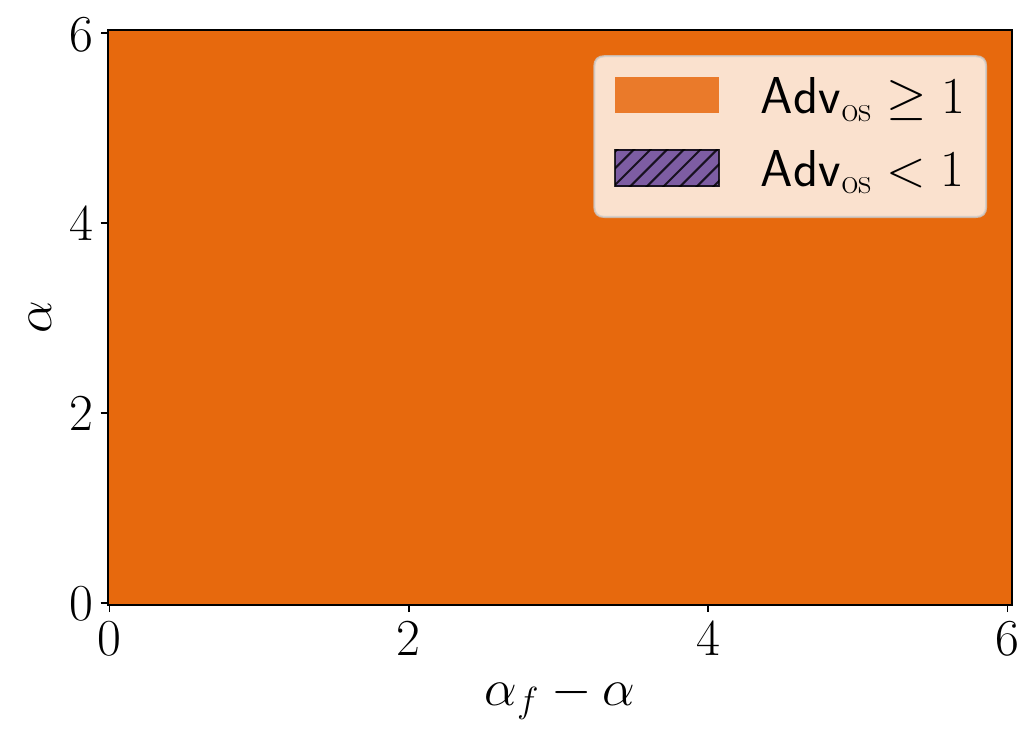}
    }    
    \caption{Dominance regions for various values of $B$}
    \label{fig-heatmap}
\end{figure}

Figure~\ref{fig-heatmap} displays the dominance regions in the $(\alpha_f - \alpha,\,\alpha)$ plane for nine values of~$B$ ranging from $0.1$ to~$1$. The panels reveal a clear monotone progression: as the flexibility budget~$B$ increases, the one-sided dominant region (orange) expands at the expense of the two-sided dominant region (purple), culminating in panel~(i) where one-sided dominance covers the entire parameter space, in agreement with~\Cref{thm:B_1}.

At small flexibility budgets (panels (a)--(c)), the two-sided allocation dominates over nearly all of the parameter space; one-sided dominance is confined to a thin region near $\alpha = 0$, precisely the regime identified by~\Cref{thm:alpha_0}. As $B$ increases through intermediate values (panels (d)--(f)), the one-sided region expands upward along the $\alpha$~axis, and the dominance boundary takes on a characteristic shape: for each fixed $\alpha > 0$, there is a value of $\alpha_f - \alpha$ beyond which the two-sided allocation dominates. This boundary shifts to larger values of $\alpha_f - \alpha$ as $B$ grows, reflecting the fact that a larger flexibility budget mitigates the asymmetry effect and requires a stronger flexibility premium to tip the balance toward two-sided allocation. At large flexibility budgets (panels (g)--(h)), the two-sided region retreats before disappearing entirely at $B = 1$.

Taken together, the panels provide a complete picture of how the three model parameters, i.e. baseline rate~$\alpha$, flexibility premium~$\alpha_f$, and flexibility budget~$B$, jointly determine the optimal allocation, complementing the analytical regimes established in~\Cref{thm:B_1,thm:alpha_0,thm:global,thm:small_large_alpha}.

\bibliographystyle{alpha}
\bibliography{references.bib}

\appendix

\section{Table of Notation} \label{apx-table}

\begin{table}[H]
\centering
\caption{Table of notation used in the paper}\label{tab:notation}
\begin{tabular}{ll}
    \hline
    \textbf{Notation} & \textbf{Description} \\[0.1em] \hline
    \noalign{\vspace{4pt}}
    \multicolumn{2}{l}{\textit{Model Parameters \& Graph Structure}} \\[0.3em]
        $n$ & Number of nodes per side in the balanced marketplace  \\[0.3em]
        $\bL, \bR$ & Probability that a supply or demand node is flexible  \\[0.3em]
        $B$ & Total flexibility budget ($\bL + \bR$)  \\[0.3em]
        $c_{xy}$ & Connection rate between supply type $x$ and demand type $y$  \\[0.3em]
        $\bfC = [c_{xy}]$ & $2 \times 2$ connection rate matrix  \\[0.3em]
        $\alpha, \alpha_f$ & Baseline rate and flexibility premium  \\[0.3em]
        $(p_1,p_2), (q_1,q_2)$ & Type probabilities for supply and demand sides  \\[0.3em]
        $M_y$ & $\triangleq \sum_x c_{xy} p_x$, aggregate demand rate for type $y$  \\[0.3em]
        $\lambda_x$ & $\triangleq \sum_y c_{xy} q_y$, aggregate supply rate for type $x$  \\[0.3em]
        \hline
        \noalign{\vspace{4pt}}
        \multicolumn{2}{l}{\textit{Analytical Framework}} \\[0.3em]
        $\pi_x^\mathrm{L}, \pi_y^\mathrm{R}$ & Degree distributions for supply and demand nodes  \\[0.3em]
        $\widehat{\pi}_x^\mathrm{L}, \widehat{\pi}_y^\mathrm{R}$ & Excess degree distributions  \\[0.3em]
        $\phi_x(s), \psi_y(s)$ & PGFs for supply and demand node degrees  \\[0.3em]
        $\widehat{\phi}_x(s), \widehat{\psi}_y(s)$ & PGFs for the excess degree distributions  \\[0.3em]
        $\aL_{xy}, \aR_{yx}$ & Neighbor-type mixing probabilities  \\[0.3em]
        $d$ & Truncation parameter for maximum degree  \\[0.3em]
        $\pi^{(\mathrm{L},d)}_{x}$ & Truncated distribution to degree $d$  \\[0.3em] 
        $F(t_1, t_2)$ & Variational objective function for matching rate  \\[0.3em]
        $t_1, t_2$ & Positivity vector components (Root Exposure Probabilities)  \\[0.3em]
        $\Theta$ & Fixed-point recursion operator for REPs  \\[0.3em]
        \hline
        \noalign{\vspace{4pt}}
        \multicolumn{2}{l}{\textit{Matching Performance \& Comparison}} \\[0.3em]
        $\matchnum(G_n)$ & Size of the maximum matching in graph $G_n$  \\[0.3em]
        $\match(\bfp, \bfq \,;\, \bfC)$ & Asymptotic matching rate \\[0.3em]
        $\match_{\OS}(B, \alpha, \alpha_f)$, $\match_{\TS}(B, \alpha, \alpha_f)$ & Matching rate for one-sided and two-sided allocations \\[0.3em]
        $\Adv(\alpha, \alpha_f, B)$ & Advantage ratio $\match_{\OS}/\match_{\TS}$; $>1$ iff one-sided dominates  \\[0.3em] 
        $\Phi_{\OS}, \Phi_{\TS}$ & Limit matching rate functions as $\alpha_f \to \infty$ \\[0.3em]
        \hline
\end{tabular}
\end{table}

\section{Supplementary proofs for \prettyref{sec:matching}}
\label{app:matching}

\subsection{Proof of~\texorpdfstring{\Cref{lmm:Term-1-2} }{}}

First, we prove~\eqref{eq:Term-1}. Recall that $S_i$ is the sum of offspring REPs of the $i$-th offspring demand node. Since $Z_x = \sum_{i=1}^{N_x} \frac{1}{S_i}$ and since $S_i \geq 0$, we have that $Z_x < \infty$ if and only if $S_i > 0$ for all $i$. For any term $i$, conditioned on the type of the $i$-th demand node $u$ being $y$, we have
\[
    \P(S_i = 0) = \P(\forall j\in [N_i], X_{ij}=0) = \mathbb{E}_{N \, \sim \, \pihatR_y} \left[ (1-t_y)^{N} \right] = \psihat_y(1-t_y) \, ,
\]
and so averaging over the type $y$, we have that each $S_i$ independently satisfies 
\[
    \P(S_i>0) = \sum_{y=1}^2 \aL_{xy} \left( 1 - \psihat_y(1-t_y)\right) = r_x \, .
\] 
Therefore,
\begin{align} \label{eq:first-term-simplification}
    \P(Z_x < \infty) = \E\left[ r_x^{N_x}\right] = \phi_x(r_x) \, .
\end{align}
This concludes the proof.

Next, we prove~\eqref{eq:Term-2}, by analyzing the LHS conditioned on $N_x=k$. For each $i\in\{1,\dots,k\}$, define
\[
    T_i \triangleq \frac{Y_{x,i}}{Y_{x,i}+S_i} \cdot \mathbf{1}_{\{S_1, \ldots, S_k>0\}} \, .
\]
The $k$-tuple $(T_1,\dots,T_k)$ is exchangeable, i.e. their joint law is invariant under any relabeling of the indices $1,\cdots,k$, hence:
\[
    \E \left[\sum_{i=1}^k T_i \, \Big\vert \, N_x=k\right]
    =
    \sum_{i=1}^k \E \left[T_i  \, \big \vert \,  N_x=k \right]
    =
    k \cdot \E\left[T_1 \, \big\vert \, N_x=k \right] \, .
\]
By the tower property of expectation:
\begin{align}\label{eq:towerpropresult}
    \E\left[  \sum_{i=1}^{N_x}\frac{Y_{x,i}}{Y_{x,i} + S_i} \cdot \mathbf{1}_{\{ S_{1}, \ldots, S_{N_x} > 0 \}}\right] = \E \left[N_x\cdot \E\left[T_1 \, \vert \,  N_x \right]\right]. 
\end{align}
We now analyze the inner expectation. First, conditional on $S_1>0$, $\frac{Y_{x,1}}{Y_{x,1} + S_1}=0$ if $\mathbf{1}_{\{ S_{2}, \ldots, S_{N_x} > 0 \}}$ is not satisfied. Hence:
\[
    \E\left[T_1 \, \vert \,  N_x \right] =\E\left[\frac{Y_{x,1}}{Y_{x,1}+S_1} \cdot \mathbf{1}_{\{S_1>0\}} \vert \,  N_x \right]
\]
Recall that, by definition, for any $k>0$, $S_1$ is independent of $Y_{x,1}$ and $N_x$. Also by definition, the type of $S_1$ is $y$ with probability $\aL_{xy}$. Hence, with $Y_x^{(k)}$ the r.v. $Y_{x,1}$ conditioned to $N_x=k$, and $S^y$ the r.v. $S_1$ conditioned to its type being $y$, we have:
\begin{align*}
    \E\left[T_1 \, \vert \,  N_x=k \right]=    \sum_{y=1}^2 \aL_{xy}\,
    \E \bigg[\frac{Y_x^{(k)}}{Y_x^{(k)}+S^y} \cdot \mathbf{1}_{\{S^y>0\}}\bigg] 
\end{align*}
Let us now analyze the outer expectation. Plugging the above in~\eqref{eq:towerpropresult}, we get
\begin{align*}
    E\left[  \sum_{i=1}^{N_x}\frac{Y_{x,i}}{Y_{x,i} + S_i} \cdot \mathbf{1}_{\{ S_{1}, \ldots, S_{N_x} > 0 \}}\right]     &=
    \sum_{k \geq 1} \P(N_x = k) \,k
    \sum_{y=1}^2 \aL_{xy} \,
    \E \bigg[\frac{Y_x^{(k)}}{Y_x^{(k)}+S^y} \cdot \mathbf{1}_{\{S^y>0\}} \bigg]
    \\
    & =
   \sum_{y=1}^2 \aL_{xy}  \sum_{k \geq 1} \P(N_x = k) \,k
    \,
    \E \bigg[\frac{Y_x^{(k)}}{Y_x^{(k)}+S^y} \cdot \mathbf{1}_{\{S^y>0\}} \bigg]
\end{align*}
We introduce  
\[
    X_x \stackrel{\mathrm{d}.}{=} \bigg(1+\sum_{j=1}^{\widehat{N}_x} S_j^{-1}\bigg)^{-1} \, ,
\]
where $\widehat{N}_x\sim \pihatL_x$ and $S_j$ is generated the same way as $S_i$ in the definition of $Z_x$. This yields
\begin{align*} 
    \P(Z_x < \infty) - \E\left[  \sum_{i=1}^{N_x}  \frac{Y_{x,i}}{Y_{x,i} + S_i} \cdot \mathbf{1}_{\{ S_{1}, \ldots, S_{N_x} > 0 \}}\right] 
    = 
    \phi_x(r_x) - \phi'_x(1) \sum_{y=1}^2 \aL_{xy}\, \E \left[ \frac{X_x}{X_x+S^y} \cdot \mathbf{1}_{\{S^y>0\}} \right] .
\end{align*}
Since $\widehat{\mu}$ is a fixed point of $\Theta$, we have $X_x\sim \widehat{\mu}_x$, and the proof is complete.

\subsection{Proof of~\texorpdfstring{\Cref{clm:formula-for-Ky}}{} }

We introduce the following shorthand:
\[
    K_y(\mu_1,\mu_2)
    \, \triangleq \,
     \psi'_y(1) \cdot  \sum_{x=1}^2 \aR_{yx}\,
    \E\left[\frac{X_x}{X_x+S^y}\mathbf{1}_{\{S^y>0\}}\right] \,,
\]
where $X_x \sim \widehat\mu_x$ for each  $x\in \{1, 2\}$ and $S^y$ has the distribution of the sum of a $\pihatR_y$-distributed number of i.i.d. variables drawn from the mixture $\sum_{x} \aR_{yx} \, \widehat\mu_{x}$. 
First, we simplify the sum. Denote $X\sim\sum_{x} \aR_{yx} \,  \widehat\mu_{x}$. We have:
\[
    K_y(\mu_1,\mu_2)
    \, = \,
     \psi'_y(1)  \cdot
    \E\left[\frac{X}{X+S^{y}}\mathbf{1}_{\{S^{y}>0\}}\right]\,,
\]
With $X_i$ independent copies of $X$ and $N'_y\sim \pihatR_y$, the above can be rewritten:
\[
    K_y(\mu_1,\mu_2)
    \, = \,
     \psi'_y(1)  \cdot
    \E\bigg[\frac{X}{X+\sum_{i=1}^{N'_{y}}X_i}\mathbf{1}_{\{\sum_{i=1}^{N'_{y}}X_i>0\}}\bigg]\,.
\]
Defining $N^{\star} =\sum_{i=1}^{N'_{y}}\mathbf{1}_{\{X_i>0\}}$ and $X^+$, $X_i^+$ independent random variable iid from $\sum_{x} \aR_{yx} \,  \widehat\mu_{x}$ conditioned on being strictly positive:
\[
    K_y(\mu_1,\mu_2)
    \, = \,
     \psi'_y(1) \cdot  \mathbb{P}(X>0) \cdot 
    \E\left[\frac{X^+}{X^++\sum_{i=1}^{N^{\star}}X_i^+}\mathbf{1}_{\{N^{\star}>0\}}\right]\,.
\]
By exchangeability, and as $\hat{t}_y = \mathbb{P}(X>0)$:
\[
    K_y(\mu_1,\mu_2)
    \, = \,
     \psi'_y(1) \cdot   \hat{t}_y\,
    \E\left[\frac{1}{1+N^{\star}}\mathbf{1}_{\{N^{\star}>0\}}\right]\,.
\]
The remainder of the proof is algebra.
\begin{align*}
    K_y(\mu_1,\mu_2) 
        & \,=\, \psi'_y(1) \cdot \hat{t}_y \cdot \sum_{n\geq 1} \widehat{\pi}_y(n)\sum_{d=1}^n\binom{n}{d}\frac{\hat{t}_y^d \, (1-\hat{t}_y)^{n-d}}{d+1}  \\
        & \,=\, \psi'_y(1) \cdot \sum_{n\geq 1} \widehat{\pi}_y(n)\sum_{d=1}^n\binom{n}{d} (1-\hat{t}_y)^{n-d}\int_0^{\hat{t}_y}t^{d} \, \diff t \\
        & \,=\, \psi'_y(1) \cdot \int_0^{\hat{t}_y} \widehat{\psi}_y(1+t-\hat{t}_y) \, \diff t \,-\, \psi'_y(1) \cdot \hat{t}_y \cdot \widehat{\psi}_y(1-\hat{t}_y) \\
        & \,=\, \int_{1-\hat{t}_y}^{1} \psi'_y(t) \, \diff t \, -\, \hat{t}_y \cdot \psi'_y(1-\hat{t}_y) \\
        & \,=\, 1- \psi_y(1-\hat{t}_y)-\hat{t}_y \cdot \psi'_y(1-\hat{t}_y) \, .
\end{align*}

\subsection{Proof of~\texorpdfstring{\Cref{clm:max}}{}}

We show $H_y(\tmax_1, \tmax_2) = \tmax_y$ for each $y \in \{1,2\}$. Since $\psi_y$ is a PGF,
\[
    \psi''_y(s) = \sum_{k \geq 2} k(k-1) \cdot \piR(k) \cdot s^{k-2} \, ,
\]
so $\psi_y'' \geq 0$ on $[0,1]$ and $\psi_y''(s) > 0$ for all $s > 0$, provided $\piR(k \geq 2) > 0$. We exclude the degenerate case $\piR(k \geq 2) = 0$. Note also that $H_y$ is a convex combination of values in $[0,1]$, so $H_y \in [0,1]$. We consider two cases based on the positivity of $\psi''_y(1 - \tmax_y)$.
\begin{itemize}
    \item Case 1: $\psi''_y(1 - \tmax_y) > 0$. From~\eqref{eq:derivative-formula}, the sign of $\frac{\partial F}{\partial t_y}$ at $(\tmax_1, \tmax_2)$ equals the sign of $H_y(\tmax_1, \tmax_2) - \tmax_y$. Since $(\tmax_1, \tmax_2)$ maximizes $F$ over $[0,1]^2$, the first-order optimality conditions require: $\frac{\partial F}{\partial t_y} = 0$ if $\tmax_y \in (0,1)$, \ $\frac{\partial F}{\partial t_y} \leq 0$ if $\tmax_y = 0$, and $\frac{\partial F}{\partial t_y} \geq 0$ if $\tmax_y = 1$. In each subcase, since $H_y \in [0,1]$, we conclude $H_y(\tmax_1, \tmax_2) = \tmax_y$.
    \item Case 2: $\psi''_y(1 - \tmax_y) = 0$. Since $\psi_y''(s) > 0$ for $s > 0$, this case requires $\tmax_y = 1$ (and $\psi_y''(0) = 2\piR(2) = 0$). We show that $H_y(\tmax_1, \tmax_2) = 1$. Assume for the sake of contradiction that $H_y(\tmax_1, \tmax_2) = 1 - \delta$ for some $\delta \in (0,1)$. Define for $\varepsilon \in (0,1)$ the vector
    \[
        t^{(\varepsilon)} \triangleq
        \begin{cases}
            (1-\varepsilon, \,  \tmax_2),  & \mbox{if $y=1$} \\
            ( \tmax_1, 1-\varepsilon), & \mbox{if $y=2$}
        \end{cases}
        \, .
    \]
    Since $H_y$ is continuous, there exists $\varepsilon_0\in(0,1)$ such that $H_y(t^{(\varepsilon)}) \leq 1-\delta/2$ for all $\varepsilon\in(0,\varepsilon_0)$.
    Choose $\varepsilon\in(0,\min\{\varepsilon_0,\delta/2\})$. Then
    \[
        H_y(t^{(\varepsilon)})-t^{(\varepsilon)}_y \,\le\, (1-\delta/2)-(1-\varepsilon)
        \, = \,
        \varepsilon-\delta/2
        \, < \,
        0 \, .
    \]
    Moreover, $\psi_y''\big(1-t^{(\varepsilon)}_y\big)=\psi_y''(\varepsilon)>0$ since $\varepsilon > 0$. Plugging into~\eqref{eq:derivative-formula}, we get $\frac{\partial F}{\partial t_y}(t^{(\varepsilon)}) < 0$ for all sufficiently small $\varepsilon$. In particular, increasing the $y$-th coordinate from $1-\varepsilon$ to $1$ strictly decreases $F$, contradicting maximality of $( \tmax_1, \tmax_2)$. Hence $H_y( \tmax_1, \tmax_2)=1$.
\end{itemize}
In both cases, $H_y( \tmax_1, \tmax_2) = \tmax_y$. Since $y$ was arbitrary, $H( \tmax_1, \tmax_2) = (\tmax_1, \tmax_2)$. This completes the proof of~\Cref{clm:max}.

\subsection{Proof of~\texorpdfstring{\Cref{clm:const}}{}}

For the base case $k = 0$, we have by construction,
\[
    t_y^{(0)}
    = \sum_{x=1}^{2} \aR_{yx}\,\mu_x^{(0)}\left((0,1]\right)
    = \sum_{x=1}^{2} \aR_{yx}\,\smax_x
    = H_y(\tmax_1, \tmax_2)
    = \tmax_y \, ,
\]
so $(t_1^{(0)},t_2^{(0)}) = ( \tmax_1, \tmax_2) $. For the inductive step, suppose $(t_1^{(k)},t_2^{(k)}) = (\tmax_1, \tmax_2) $. By definition,
\(
    t_y^{(k+1)} = \sum_{x=1}^{2} \aR_{yx}\,\mu_x^{(k+1)} \left((0,1]\right) \, .
\)
Now $\mu_x^{(k+1)}((0,1]) = \P(Y_x > 0)$, where $Y_x\sim \Theta_{x; \boldsymbol{\pihatL},\boldsymbol{ \pihatR }}(\boldsymbol{\boldsymbol{{\mu}}}^{(k)})$.
With the notations introduced in~\eqref{eq:Thetadef},
\begin{align*}
    \mathbb{P}(Y_x>0) = \mathbb{P} \left( \forall i \in [N_x] , \,   \sum_{j=1}^{N'_{i}} X_{ij}>0
    \right)
    =
    \widehat\phi_x(r_x(\tmax_1, \tmax_2))
    =
    \smax_x \, .
\end{align*}
Therefore,
\[
    t_y^{(k+1)} = \sum_{x=1}^{2} \aR_{yx}\, \smax_x = H_y(\tmax_1, \tmax_2) = \tmax_y \, .
\]
Hence, $(t_1^{(k)},t_2^{(k)}) = ( \tmax_1, \tmax_2)$ for all $k \geq 0$. This concludes the proof.

\section{Supplementary proofs for \prettyref{sec:dominance}} \label{app:limiting}

This section presents the proof of the limiting matching rates as $\alpha_f \to \infty$, for the case of one-sided flexibility (\Cref{prop:oneside}) and two-sided flexibility (\Cref{prop:twoside}).

\subsection{Proof of~\texorpdfstring{\Cref{prop:oneside}}{}}

Let $y = e^{-Mt_1}$ with $M = 2\alpha + (\alpha_f-\alpha)B$. Under this change of variables, $t_1\in[0,1]$ maps bijectively to $y\in[e^{-M},1]$, and the objective becomes
\begin{align*}
    \widetilde{F}_{\OS}(y) 
    = (1-B)e^{-2\alpha y} + y(1-\ln y) - 1 
      + Be^{-(\alpha+\alpha_f)y} \, .
\end{align*}

First,
we show that any maximizer of $\widetilde{F}_{\OS}(y)$ over $[e^{-M},1]$ lies in a compact interval $[\varepsilon,1]$ for some $\varepsilon>0$ independent of $\alpha_f$. Evaluating at $y=1$:
\begin{align} \label{eq:FS-1}
    \widetilde{F}_{\OS}(1) 
    = (1-B)e^{-2\alpha} + Be^{-(\alpha+\alpha_f)} 
    > (1-B)e^{-2\alpha} > 0.
\end{align}
For any $y\in(0,1]$, we have $e^{-2\alpha y}\leq 1$ and $e^{-(\alpha+\alpha_f)y}\leq 1$, and so we have the upper bound
\begin{align} \label{eq:FS-2}
    \widetilde{F}_{\OS}(y) 
    \leq (1-B) + B + y(1-\ln y) - 1 
    = y(1-\ln y) \, .
\end{align}
Now, $h(y) \triangleq y(1-\ln y)$ is continuous and strictly increasing on $(0,1)$ with $h(0)=0$ and $h(1)=1>(1-B)e^{-2\alpha}$. Therefore, there exists a unique $\varepsilon\in(0,1)$ depending only on $\alpha$ and $B$ (and not on $\alpha_f$) such that $h(\varepsilon)=(1-B)e^{-2\alpha}$. By strict monotonicity, $h(y)<(1-B)e^{-2\alpha}$ for all $y\in(0,\varepsilon)$, so $y(1-\ln y)\leq(1-B)e^{-2\alpha}$ for all $y\in(0,\varepsilon]$. Hence, combining with~\eqref{eq:FS-1} and~\eqref{eq:FS-2},
\[
    \widetilde{F}_{\OS}(y)\leq y(1-\ln y)\leq (1-B)e^{-2\alpha}<\widetilde{F}_{\OS}(1) ~~~~ \mbox{ for all } y\in(0,\varepsilon) \,, 
\]
so any maximizer of $\widetilde{F}_{\OS}(y)$ lies in $[\varepsilon,1]$. Since $e^{-M}\to 0$ as $\alpha_f\to\infty$, we have $e^{-M}<\varepsilon$ for all sufficiently large $\alpha_f$, so $[\varepsilon,1]\subset[e^{-M},1]$ and all maximizers lie in the fixed compact set $[\varepsilon,1]$.

Finally, note that the only $\alpha_f$-dependent term of $\widetilde{F}_{\OS}$ is $Be^{-(\alpha+\alpha_f)y}$, which satisfies 
\[
    \Big| Be^{-(\alpha+\alpha_f)y} \Big|
    \leq Be^{-(\alpha+\alpha_f)\varepsilon}
    \to 0
\]
uniformly on $[\varepsilon,1]$ as $\alpha_f\to\infty$. Hence $\widetilde{F}_{\OS}\to\Phi_{\OS}$ uniformly on $[\varepsilon,1]$, and by uniform convergence on a compact set, it follows that
\begin{align} \label{eq:unif-conv}
    \lim_{\alpha_f\to\infty}
    \max_{y\in[\varepsilon,1]} \widetilde{F}_{\OS}(y)
    = \max_{y\in[\varepsilon,1]} \Phi_{\OS}(y) \, .
\end{align}
Chaining the results:
\begin{align*}
    \lim_{\alpha_f\to\infty} \, 
    \max_{t_1\in[0,1]}F_{\OS}(t_1)
    & \stepa{=} \lim_{\alpha_f\to\infty} \, 
       \max_{y\in[e^{-M},1]}\widetilde{F}_{OS}(y) \\
    & \stepb{=} \lim_{\alpha_f\to\infty} \, 
    \max_{y\in[\varepsilon,1]}\widetilde{F}_{\OS}(y) \\
    & \stepc{=} \max_{y\in[\varepsilon,1]}\Phi_{\OS}(y) \\
    & \stepd{=} \max_{y\in(0,1]}\Phi_{\OS}(y) \, .
\end{align*}
Here, (a) uses the bijection $t_1\in[0,1] \leftrightarrow y\in[e^{-M},1]$. Further, (b) holds because, for all sufficiently large $\alpha_f$, the maximizer of $\widetilde{F}_{\OS}$ over $[e^{-M},1]$ lies in $[\varepsilon,1]$, so the two maxima coincide for all large $\alpha_f$ and their limits are equal. The equality (c) uses~\eqref{eq:unif-conv}, and (d) holds because for any $y\in(0,\varepsilon]$,
\begin{align}\label{eq:maxboundedaway0TS}
    \Phi_{\OS}(y) 
    \leq y \,(1-\ln y) 
    \leq (1-B)e^{-2\alpha} 
    = \Phi_{\OS}(1)
    \leq \max_{y\in[\varepsilon,1]}\Phi_{\OS}(y) \, ,
\end{align}
where the second inequality uses $h(y)\leq(1-B)e^{-2\alpha}$ on $(0,\varepsilon]$ as established. Hence the maximum over $(0,\varepsilon]$ cannot exceed the maximum over $[\varepsilon,1]$, giving $\max_{(0,1]}\Phi_{\OS}= \max_{[\varepsilon,1]}\Phi_{\OS}$.

\subsection{Proof of \prettyref{prop:twoside}}

With the change of variable $y_1 = e^{-M_1(\alpha_f) t_1}$ and $y_2 = e^{-M_2(\alpha_f) t_2}$, where $M_1(\alpha_f) = \left(2-\frac{B}{2}\right)\alpha + \frac{B}{2}\alpha_f$ and $M_2(\alpha_f) = \left(1-\frac{B}{2}\right)\alpha + \left(1+\frac{B}{2}\right)\alpha_f$, the two-sided objective becomes:
\begin{align}
    \begin{aligned} \label{eq: FTS-y}
   \widetilde{F}_{\TS}(y_1, y_2) 
   &= \left(1-\frac{B}{2}\right)
      e^{-2\alpha(1-B/2)y_1}\, e^{-(\alpha+\alpha_f)(B/2)y_2} \\
   &\quad + \frac{B}{2}\,
      e^{-(\alpha+\alpha_f)(1-B/2)y_1}\, e^{-\alpha_f B y_2} \\
   &\quad + \left(1-\frac{B}{2}\right)y_1(1-\ln y_1) 
      + \frac{B}{2}y_2(1-\ln y_2) - 1,
      \end{aligned}
\end{align}
on the domain $\mathcal{D}(\alpha_f) \triangleq [e^{-M_1(\alpha_f)},1]\times[e^{-M_2(\alpha_f)},1]$. We define $h(y) \triangleq y(1-\ln y)$, and we use the convention $h(0) \triangleq  \lim_{y\to 0^+} y(1-\ln y) = 0$ so that $h(y)$ is continuous on $[0,1]$. We now establish the limit via $\liminf$ and $\limsup$ bounds.

\noindent \textit{Lower Bound $(\liminf)$:} Fix any $y_1\in(0,1]$  and set $y_2(\alpha_f) = e^{-\alpha_f(1+B/2)}$.
Since $\alpha_f(1+B/2)\leq M_2(\alpha_f)$ for $\alpha\geq 0$ and $B\in(0,1)$, we have $y_2(\alpha_f)\in[e^{-M_2(\alpha_f)},1]$. 
Since $e^{-M_1(\alpha_f)}\to 0$ as $\alpha_f\to\infty$, we have for any fixed $y_1>0$ that $y_1\geq e^{-M_1(\alpha_f)}$ for all sufficiently large $\alpha_f$. Thus, $(y_1,y_2(\alpha_f))\in\mathcal{D}(\alpha_f)$ for sufficiently large $\alpha_f$.

As $\alpha_f\to\infty$, it follows that $y_2(\alpha_f)\to 0$ and $\alpha_f \,y_2(\alpha_f) = \alpha_f \, e^{-\alpha_f(1+B/2)} \to 0$ (since $xe^{-cx}\to 0$ for $c>0$), so the exponential coupling terms in~\eqref{eq: FTS-y} satisfy
\[
    e^{-(\alpha+\alpha_f)(B/2)y_2(\alpha_f)}\to 1 
    ~~~~ \mbox{ and } ~~~~
    e^{-\alpha_f B y_2(\alpha_f)}\to 1 \, .
\]
Further, the second term in~\eqref{eq: FTS-y} vanishes since $(\alpha+\alpha_f)(1-B/2)y_1\to\infty$ for $y_1>0$. The fourth term $y_2(\alpha_f)(1-\ln y_2(\alpha_f))\to 0$ since $\lim_{y\to 0^+}y(1-\ln y) = 0$. Collecting terms,
\[
    \lim_{\alpha_f\to\infty}
    \widetilde{F}_{\TS} \left(y_1,\,y_2(\alpha_f) \right)
    = \Phi_{\TS}(y_1).
\]
Taking the supremum over $y_1\in(0,1]$ gives $\liminf_{\alpha_f\to\infty}\max_{\mathcal{D}(\alpha_f)} \widetilde{F}_{\TS} \geq 
\sup_{y\in(0,1]}\Phi_{\TS}(y)$.
Additionally, evaluating at $y_1=y_2=1$: the two terms bearing $\alpha_f$ in their exponents vanish, since $e^{-(\alpha+\alpha_f)(B/2)}\to 0$ and $e^{-(\alpha+\alpha_f)(1-B/2)}e^{-\alpha_f B}\to 0$, while the factor $e^{-2\alpha(1-B/2)}$ in the first term is a fixed constant. The polynomial terms sum to $(1-B/2)\cdot 1+(B/2)\cdot 1-1=0$. Therefore $\widetilde{F}_{\TS}(1,1)\to 0$. Combined:
\begin{align} \label{eq:liminf-bound}
    \liminf_{\alpha_f\to\infty}
    \max_{\mathcal{D}(\alpha_f)}
    \widetilde{F}_{\TS}
    \, \geq \, 
    \max\left(0,\,\sup_{y\in(0,1]}
    \Phi_{\TS}(y)\right).
\end{align}

\noindent \textit{Upper Bound $(\limsup)$:} Let $M_n = \max_{\mathcal{D}(\alpha_{f,n})} \widetilde{F}_{\TS}(\,\cdot\,, \alpha_{f,n})$ for a sequence $\alpha_{f,n}\to\infty$. Consider any subsequence along which $M_{n_k}$ converges to the $\limsup$, with corresponding maximizers $(y_{1,k},y_{2,k})\in\mathcal{D}(\alpha_{f,n_k}) \subset[0,1]^2$. By compactness of $[0,1]^2$, pass to a further subsequence with $(y_{1,k},y_{2,k})\to(u,v)\in[0,1]^2$.
We examine the limit by cases.
\begin{itemize}
    \item Case 1: $v > 0$.
    Since $y_{2,k}\to v>0$, for large $k$ we have $y_{2,k}\geq v/2>0$, so $\alpha_{f,n_k}y_{2,k}\geq\alpha_{f,n_k}v/2\to\infty$. Hence both coupling exponentials satisfy $e^{-(\alpha+\alpha_f)(B/2)y_{2,k}}\to 0$ and $e^{-\alpha_f B y_{2,k}}\to 0$, so the first two terms of $\widetilde{F}_{\TS}$ vanish. By continuity of $h$ on $[0,1]$:
    \[
        \lim_{k\to\infty}M_{n_k}
        = \left(1-\frac{B}{2}\right) \,u \, (1-\ln u)
          +\frac{B}{2} \, v \, (1-\ln v)-1
        \,\leq\, 
        \left(1-\frac{B}{2}\right)+\frac{B}{2}-1=0,
    \]
    using $x(1-\ln x)\leq 1$ for all $x\in[0,1]$.

    \item Case 2: $v = 0$.
    \begin{itemize}
        \item {Subcase $2$a: $u > 0$.}
        The second term vanishes since $(\alpha+\alpha_f)(1-B/2)y_{1,k}\to\infty$. The fourth term $y_{2,k}(1-\ln y_{2,k})\to 0$ by continuity of $h$ at $0$. Bounding $e^{-(\alpha+\alpha_f)(B/2)y_{2,k}} \leq 1$:
        \[
            \lim_{k\to\infty}M_{n_k}
            \,\leq\, 
            \left(1-\frac{B}{2}\right)
            e^{-2\alpha(1-B/2)u}
            +\left(1-\frac{B}{2}\right) u(1-\ln u)-1
            = \Phi_{\TS}(u)
            \,\leq\,\sup_y\Phi_{\TS}(y).
        \]

        \item {Subcase $2$b: $u = 0$, $v = 0$.}
        All polynomial terms vanish by $h(0)=0$. Bounding the exponentials by their coefficients:
        \[
            \lim_{k\to\infty}M_{n_k}
            \,\leq\, 
            \left(1-\frac{B}{2}\right)
            +\frac{B}{2}-1=0.
        \]
    \end{itemize}
\end{itemize}

In all cases the limit of the subsequence is upper bounded by $\max(0,\sup_{y}\Phi_{\TS}(y))$, and so $\limsup_{n\to\infty}M_n \leq \max(0,\sup_y\Phi_{\TS}(y))$. Combining with the lower bound~\eqref{eq:liminf-bound} establishes the limit. It remains to verify that the supremum over $(0,1]$ is attained. Since
\[
    \lim_{y\to 0^+}\Phi_{\TS}(y) 
    = \left(1-\frac{B}{2}\right)[1 + 0] - 1 
    = -\frac{B}{2} < 0 \, ,
\]
we have for small enough $\varepsilon>0$ that $\sup_{y\in(0,\varepsilon]}\Phi_{\TS}(y)\leq -B/4 < 0$.
If $\sup_{y\in(0,1]}\Phi_{\TS}(y)>0$, then the supremum is not achieved in $(0,\varepsilon]$ and equals the maximum over the compact set $[\varepsilon,1]$, which is attained by continuity. 
If $\sup_{y\in(0,1]}\Phi_{\TS}(y)\leq 0$, then $\max(0,\sup_y\Phi_{\TS}(y))=0$ regardless of  whether the supremum is attained. In both cases the formula holds with $\max$ in place of $\sup$, i.e.
\begin{align*}
    \lim_{\alpha_f \to \infty} 
    \max_{t_1, t_2 \in [0,1]} F_{\TS}(t_1, t_2) 
    = \max\left(0,\,
      \max_{y \in (0,1]} \Phi_{\TS}(y)\right).
\end{align*}
This concludes the proof.

\section{Details on~\texorpdfstring{\Cref{rmk:Thm3}}{}} \label{app:comp}
\cite[Theorem~3]{freund2024twov3} establishes two-sided dominance
for $B\in(0,1)$ whenever $0<\alpha<\alpha_\star(B)$ and
$\alpha_f>\alpha_f^\star(B,\alpha)$, where
\begin{align}
   \begin{aligned}\label{eq:fmz}
           \alpha_\star(B)
    &\triangleq
    \min \left\{
    \frac{B^2}{8(1-B/2)^3},\,
    \frac{1}{2(1-B/2)}\ln\frac{2-B}{B}
    \right\} \\[0.3em]
    \alpha_f^\star(B,\alpha)
    &\triangleq
    \frac{\ln B - \ln \left(2\alpha\bigl[
    (B/2)^2 - 2\alpha(1-B/2)^3\bigr]\right)}
    {(1-B/2)e^{-2\alpha(1-B/2)} - B/2}.
       \end{aligned}
\end{align}
The proof of~\cite[Theorem~3]{freund2024twov3} lower-bounds
$\match_{\TS}$ by
\begin{equation}\label{eq:FMZ-lb}
    L_{\mathrm{FMZ}}(B,\alpha,\alpha_f)
    \,\triangleq\,
    m_{\mathrm{reg}}^{\mathrm{FMZ}}+B-C_{\mathrm{FMZ}},
\end{equation}
where
\begin{align}
     m_{\mathrm{reg}}^{\mathrm{FMZ}}
    & \triangleq 
    2 \left(1-\frac{B}{2} \right)
     \left(1- \left(1-\frac{B}{2} \right)\alpha-e^{-2\alpha(1-B/2)} \right)  \label{eq:reg}\\
    C_{\mathrm{FMZ}}
    & \triangleq
    \frac{2}{\alpha_f}
     \left(e^{\alpha_f B/2}-1 \right)
    e^{-\alpha_f(1-B/2)e^{-2\alpha(1-B/2)}}, \label{eq:C_fmz}
\end{align} 
and upper-bounds $\match_{\OS}$ by
\[
    U_{\mathrm{FMZ}}(B,\alpha)\triangleq 1-(1-B)e^{-2\alpha}.
\]
Two-sided dominance follows whenever
$L_{\mathrm{FMZ}}>U_{\mathrm{FMZ}}$, so the admissible  regime for $\alpha$ and $\alpha_f$ is determined by the strength of $L_{\mathrm{FMZ}}$.

The lower bound in~\cite[Theorem 3]{freund2024twov3} is obtained by lower bounding the number of nodes matched in the following process: first, a maximum matching is constructed among regular nodes. The leftover nodes are then greedily matched to flexible nodes. The following proposition demonstrates how the bound $\match_{\TS}\geq L_{\mathrm{FMZ}}$ can be recovered directly from the variational formula, circumventing the need to analyze algorithmic bounds.
\begin{proposition}\label{prop:TS_lower}
For all $B\in(0,1)$, $\alpha\geq 0$, and $\alpha_f>0$:
$\match_{\TS}(B,\alpha,\alpha_f)\geq L_{\mathrm{FMZ}}(B,\alpha,\alpha_f)$. 
\end{proposition}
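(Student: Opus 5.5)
The plan is to prove the equivalent statement $\max_{t_1,t_2\in[0,1]} F_{\TS}(t_1,t_2)\le 1-L_{\mathrm{FMZ}}$, using Corollary~\ref{cor:matching_size_two}. This is the harder direction of the variational formula, because it requires an upper bound on $F_{\TS}$ at its maximizer and not just an evaluation at one convenient point.

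\textbf{Reduction to stationary points.} I would not bound $F_{\TS}$ uniformly over $[0,1]^2$. Instead I would invoke Lemma~\ref{clm:max}, which gives a maximizer $(\tmax_1,\tmax_2)$ that is a fixed point of the positivity map $H$. In the Poisson case this fixed-point property becomes explicit. Write $y_i\triangleq e^{-M_i \tmax_i}$ and $h(y)=y(1-\ln y)$, so that $e^{-M_it_i}(1+M_it_i)=h(y_i)$. The $y_1$-coordinate of the fixed point then reads
\[
-\ln y_1 = 2\alpha(1-\tfrac B2)\,E_1 + (\alpha+\alpha_f)\tfrac B2\,E_2 ,
\]
where $E_1,E_2$ denote the two exponentials appearing in~\eqref{eq:FTSdef}, and there is an analogous equation for $y_2$. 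Since $E_1,E_2\le 1$ and $M_1=2\alpha(1-\tfrac B2)+(\alpha+\alpha_f)\tfrac B2$, these equations give the a priori lower bound $y_1\ge e^{-M_1}$, together with sharper information linking $y_1$ and $y_2$.

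\textbf{Split into a regular part and a flexible part.} I would write $F_{\TS}=R(y_1)+Q(y_1,y_2)$.
\begin{itemize}
\item The regular part is $R(y_1)\triangleq(1-\tfrac B2)\big[e^{-c y_1}+h(y_1)-1\big]$ with $c\triangleq 2\alpha(1-\tfrac B2)$. It is obtained by dropping the factor $e^{-(\alpha+\alpha_f)(B/2)y_2}\le1$ from the first term.
\item The remainder $Q$ collects the flexible term $\tfrac B2\big[e^{-(\alpha+\alpha_f)(1-B/2)y_1-\alpha_f B y_2}+h(y_2)-1\big]$ plus the slack left over from that dropped factor.
\end{itemize}
The regular part should produce $1-m_{\mathrm{reg}}^{\mathrm{FMZ}}-B$ up to sign conventions, mirroring the ``regular matching first'' stage of the FMZ algorithm. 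To bound $R$, I would use convexity of $y\mapsto e^{-cy}$ on $[y_1,1]$ (the chord bound $e^{-cy}\le 1-y+ye^{-c}$) and the elementary inequality $-\ln y\le (1-y)/y$. The aim is to show that $\max_y\big[e^{-cy}+h(y)\big]\le 2-2(1-\tfrac c2-e^{-c})$, which after multiplying by $(1-\tfrac B2)$ gives exactly the term $m_{\mathrm{reg}}^{\mathrm{FMZ}}$ in~\eqref{eq:reg}. If the chord bound turns out too lossy, I would fall back to the fact that $\tfrac12\big(2-\max_y[e^{-cy}+h(y)]\big)$ is the exact Karp--Sipser/Erd\H{o}s--R\'enyi matching rate at mean degree $c$. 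Then $m_{\mathrm{reg}}^{\mathrm{FMZ}}$ is a known algebraic lower bound on it, proved by a one-variable calculus check.

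\textbf{Flexible part and main obstacle.} The main obstacle will be bounding the flexible remainder $Q$ by $C_{\mathrm{FMZ}}$ from~\eqref{eq:C_fmz}, with the constants matching exactly. The structure of $C_{\mathrm{FMZ}}$ suggests the following.
\begin{itemize}
\item The factor $e^{-\alpha_f(1-B/2)e^{-c}}$ should come from the second exponential once the stationarity equation forces $y_1\gtrsim e^{-c}$. In words, unmatched regular demand nodes see the flexible side at rate $\alpha_f(1-\tfrac B2)$.
\item The prefactor $\tfrac{2}{\alpha_f}(e^{\alpha_f B/2}-1)$ should come from optimizing over $y_2$: the map $y_2\mapsto h(y_2)-1+(\text{coupling terms})$ is maximized at a point where $\alpha_f y_2$ is of order $B/2$. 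I would integrate the derivative identity~\eqref{eq:derivative-formula} in $t_2$ from $0$ to $\tmax_2$ to turn this into an integral of the form $\int_0^{B/2}e^{\alpha_f s}\,ds$.
\end{itemize}
I would first try to make this work by a case split. Either $y_1\ge e^{-c}$, in which case the bound above applies. Or $y_1<e^{-c}$, in which case the $y_1$ stationarity equation forces $E_2$ to be large. That in turn forces $y_2$ to be small, and a direct estimate should then show $Q\le 0\le C_{\mathrm{FMZ}}$.
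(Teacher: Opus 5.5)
Your decomposition $F_{\mathrm{TS}}\le R(y_1)+Q$ is a reasonable start, and your first case can be completed. However, the bookkeeping is off by $B/2$, and correcting it exposes a genuine gap in your second case.

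\textbf{The target for $Q$.} Set $\Gamma\triangleq\max_y R(y)$. Then $R(y_1)\le\Gamma\le 1-\frac B2-m_{\mathrm{reg}}^{\mathrm{FMZ}}$, and this cannot be improved to $1-B-m_{\mathrm{reg}}^{\mathrm{FMZ}}$: at $\alpha=0$ one has $\max R=1-\frac B2$. So what you must prove is $Q\le C_{\mathrm{FMZ}}-\frac B2$ at the maximizer, not $Q\le C_{\mathrm{FMZ}}$, and certainly not $Q\le 0$.

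For $y_1\ge e^{-c}$ this does hold, and without stationarity. Put $G\triangleq(1-\frac B2)e^{-c}$. Bound $E_2\le e^{-(\alpha+\alpha_f)G}$ and the slack coefficient $(1-\frac B2)e^{-cy_1}\ge G$. Then maximize over $y_2$ using the identity $Ae^{-\kappa v}=\max_{x\in(0,A]}[-\kappa xv+x(1-\ln(x/A))]$. This yields $Q+\frac B2\le \frac B2e^{-(\alpha+\alpha_f)G}+\max_{x\in(0,G]}[\frac B2e^{-(\alpha+\alpha_f)x}+x(1-\ln\frac xG)-G]$. Each term is at most $\frac1{\alpha_f}(e^{\alpha_fB/2}-1)e^{-\alpha_fG}$ (the second by \Cref{clm:aux-ineq}), and together they give exactly $C_{\mathrm{FMZ}}$.

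\textbf{The second case is where the plan fails.} Stationarity does not force $E_2$ to be large. The $y_1$-equation combined with $-\ln y_1>c$ only gives $(\alpha+\alpha_f)\frac B2E_2>c(1-E_1)$, which is vacuous at $\alpha=0$.

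In fact, at $\alpha=0$ you have $e^{-c}=1$ and $\partial_{y_1}F_{\mathrm{TS}}(1,y_2)=-\frac B2(1-\frac B2)\alpha_fE_2<0$. So the maximizer always lies in your second case; for large $\alpha_f$ it sits near $y_1=1$ with $E_2$ exponentially small. There, $R\le\Gamma$ and $Q\le0$ give only $\match_{\mathrm{TS}}\ge \frac B2$, whereas $L_{\mathrm{FMZ}}=B-C_{\mathrm{FMZ}}\to B$.

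Nor can you rescue this by proving $Q\le C_{\mathrm{FMZ}}-\frac B2$ on $\{y_1<e^{-c}\}$ from $Q$ alone. The term $\frac B2e^{-(\alpha+\alpha_f)(1-B/2)y_1}$ is no longer small there; for example, $Q\approx 0$ whenever $\alpha_f y_1$ and $\alpha_f y_2$ are both small.

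The missing idea is to pay for the excess of $Q$ with the deficit $\Gamma-R(y_1)$, that is, to bound the regular and flexible parts jointly. The paper does exactly this in \Cref{lmm:dominationresult}. For each fixed $y_2$ it compares $F_{\mathrm{TS}}$ with the surrogate $F_{\mathrm{Cross}}$, splitting at $y_1=\Gamma/(1-\frac B2)$ rather than at $e^{-c}$. Below that threshold it rescales $z_1=\frac{1-B/2}{\Gamma}y_1$, which makes the flexible exponentials coincide exactly. The remaining difference is $(1-\beta)H_0+\beta H_1$, where $H_0\le 0$ by $x\ln\frac1x+x-1\le0$ and $H_1\le0$ by convexity plus the definition of $\Gamma$ at the endpoint. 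The identity \eqref{eq:trickeq} then decouples the surrogate into $\Gamma-\frac B2+2\Lambda$, with $2\Lambda\le C_{\mathrm{FMZ}}$.

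\textbf{A smaller point on the regular bound.} Your primary route to $\Gamma\le 1-\frac B2-m_{\mathrm{reg}}^{\mathrm{FMZ}}$ is too lossy. The inequality $-\ln y\le(1-y)/y$ gives $h\le 1$, hence only $\max_y[e^{-cy}+h(y)]\le 2$. After the chord bound you must maximize $y(e^{-c}-\ln y)$ exactly, which gives $\exp(e^{-c}-1)$. Then apply $e^{x-1}\le 2x-\ln x-1$ at $x=e^{-c}$, as in \Cref{lmm:upperboundGamma}.
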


The proof of \prettyref{prop:TS_lower} is deferred to Appendix~\ref{sec:TS_lower}. Next, we introduce the following Lemma~\ref{lmm:OS-upper-bound} which retains
the bound $\match_{\OS}\leq U_{\mathrm{FMZ}}$.

\begin{lemma}[Upper bound on $\match_{\OS}$]
\label{lmm:OS-upper-bound}
For any $B\in(0,1)$, $\alpha\geq 0$, and $\alpha_f\geq 0$,
$\match_{\OS}(B,\alpha,\alpha_f)\leq U_{\mathrm{FMZ}}(B,\alpha)
=1-(1-B)e^{-2\alpha}$.
\end{lemma}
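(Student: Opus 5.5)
Since $\match_{\OS}=1-\max_{t_1\in[0,1]}F_{\OS}(t_1)$ by \Cref{cor:matching_size_one}, the claimed bound $\match_{\OS}\le 1-(1-B)e^{-2\alpha}$ is equivalent to $\max_{t_1}F_{\OS}(t_1)\ge (1-B)e^{-2\alpha}$. It therefore suffices to exhibit one admissible point where $F_{\OS}$ is at least this value. The natural choice is the endpoint $t_1=0$, which corresponds to $y=e^{-Mt_1}=1$ in the change of variables used in the proof of \Cref{prop:oneside}.

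Substituting $t_1=0$ into \eqref{eq:FOSdef}, the last term equals $1\cdot(1+0)-1=0$. The first two terms give
\[
F_{\OS}(0)=(1-B)e^{-2\alpha}+Be^{-(\alpha+\alpha_f)}.
\]
This is exactly the quantity $\widetilde F_{\OS}(1)$ computed in \eqref{eq:FS-1}. Since $B\ge0$, it is at least $(1-B)e^{-2\alpha}$.

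Hence $\max F_{\OS}\ge F_{\OS}(0)\ge(1-B)e^{-2\alpha}$, and so $\match_{\OS}\le 1-(1-B)e^{-2\alpha}$. This bound is valid for all $\alpha,\alpha_f\ge0$ and $B\in(0,1)$, because the corollary holds for any connection matrix. It is in fact slightly sharper, giving $\match_{\OS}\le 1-(1-B)e^{-2\alpha}-Be^{-(\alpha+\alpha_f)}$.

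The only care needed is to check that $t_1=0$ lies in the optimization domain $[0,1]$ and that the corollary applies in degenerate cases such as $\alpha=0$. The latter is covered by the paper's perturbation remark at the start of \Cref{sec:matching}. There is no real obstacle. The interpretation is that $(1-B)e^{-2\alpha}$ is the asymptotic fraction of regular supply nodes with no regular neighbours in the one-sided model, which is the isolated-node count behind the FMZ upper bound.
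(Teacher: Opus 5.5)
Your proposal is correct and matches the paper's proof: both evaluate $F_{\OS}$ at $t_1=0$ to get $F_{\OS}(0)=(1-B)e^{-2\alpha}+Be^{-(\alpha+\alpha_f)}\ge(1-B)e^{-2\alpha}$, and then use $\match_{\OS}=1-\max_{t_1}F_{\OS}(t_1)$. Your extra remarks, namely the slightly sharper isolated-node bound and the degenerate case $\alpha=0$, are accurate but not needed.
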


\begin{proof}
By Corollary~\ref{cor:matching_size_one},
$\match_{\OS}=1-\max_{t_1\in[0,1]}F_{\OS}(t_1)$.
Evaluating at $t_1=0$:
\[
    \max_{t_1\in[0,1]}F_{\OS}(t_1)
    \,\geq\,F_{\OS}(0)
    \,=\,(1-B)e^{-2\alpha}+Be^{-(\alpha+\alpha_f)}
    \,\geq\,(1-B)e^{-2\alpha},
\]
so $\match_{\OS}\leq 1-(1-B)e^{-2\alpha}$.
\end{proof}

Together with~\Cref{prop:TS_lower} and~\Cref{lmm:OS-upper-bound}, the admissible $\alpha_f$ regime of~\cite[Theorem~3]{freund2024twov3} is recovered; we omit the explicit threshold for brevity.

\subsection{Proof of~\texorpdfstring{\Cref{prop:TS_lower}}{}}\label{sec:TS_lower}
Define
\begin{align}\label{eq:Gammadef}
    \Gamma
    \triangleq
    \Big(1-\frac{B}{2}\Big)\,\max_{y\in(0,1]}
    \Big\{e^{-2\alpha(1-B/2)y}+y(1-\ln y)-1\Big\},
\end{align}
and let $M'_1\triangleq\frac{B}{2}(\alpha+\alpha_f)$,~
$M'_2\triangleq \Gamma \cdot (\alpha+\alpha_f)$, and
\begin{align}\label{eq:FCross_def}
    F_{\mathrm{Cross}}(t_1,t_2)
    &= \Gamma\exp \Big({-M'_1\,e^{-M'_2 t_2}}\Big)
     + \frac{B}{2}\exp \Big({-M'_2\,e^{-M'_1 t_1}}\Big) \nonumber\\
    &\quad + \Gamma\,e^{-M'_1 t_1}(1+M'_1 t_1)
           + \frac{B}{2}\,e^{-M'_2 t_2}(1+M'_2 t_2)
           - \Big(\Gamma+\frac{B}{2}\Big).
\end{align}
Recall also by Theorem~\ref{thm:matching-rate} that
\[
    \match_{\TS}=1-\max_{t_1,t_2\in[0,1]}F_{\mathrm{TS}}(t_1,t_2)\,.
\]

We first prove the following lemma that bounds the two-sided objective function $F_\TS$ using $F_{\mathrm{Cross}}$ at their respective optimizers.

\begin{lemma}\label{lmm:dominationresult}
    We have
    \[
        \max_{t_1,t_2\in[0,1]} F_{\mathrm{TS}}(t_1,t_2)
        \,\leq\,
        \max_{t_1,t_2\in[0,1]} F_{\mathrm{Cross}}(t_1,t_2).
    \]
\end{lemma}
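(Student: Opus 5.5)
The plan is to prove Lemma~\ref{lmm:dominationresult} combinatorially rather than analytically. We exhibit a (suboptimal) two-stage matching in $G_n$ under the two-sided allocation and compute its size with Theorem~\ref{thm:matching-rate}. The inequality then follows from $\match_{\TS}=1-\max F_{\TS}$ and from the fact that a maximum matching is at least as large as any particular matching. The point is that $F_{\mathrm{Cross}}$ is exactly the variational objective of a ``cross'' bipartite SBM, and $\Gamma$ is the per-side mass of regular nodes left unmatched by a maximum matching of the regular--regular subgraph.

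\emph{Step 1 (regular stage).} Under $(\bL,\bR)=(B/2,B/2)$, the subgraph $G_n^{\mathrm{reg}}$ induced on regular nodes is a balanced bipartite Erd\H{o}s--R\'enyi graph. It has $\approx(1-B/2)n$ nodes per side and mean degree $2\alpha(1-B/2)$. Applying Theorem~\ref{thm:matching-rate} with a single type (take $b^{\mathrm L}=b^{\mathrm R}=0$ and rescale $n$), its maximum matching covers a fraction $(1-B/2)n-\Gamma n+o(n)$ of each side in probability, with $\Gamma$ as in~\eqref{eq:Gammadef}; this uses $e^{-c e^{-ct}}$ and $y=e^{-ct}$ with the $t\ge1$ extension argument as in~\eqref{eq:domainextension}. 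We fix a maximum matching of $G_n^{\mathrm{reg}}$ chosen by a deterministic rule depending only on $G_n^{\mathrm{reg}}$. By symmetry, the unmatched regular sets $R^{\mathrm L},R^{\mathrm R}$ then have size $\Gamma n+o(n)$ on each side.

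\emph{Step 2 (cross stage).} Let $H_n$ be the bipartite graph with left side $R^{\mathrm L}\cup\{\text{flexible supply}\}$ and right side $\{\text{flexible demand}\}\cup R^{\mathrm R}$. We keep only the regular--flexible edges of $G_n$ and discard flexible--flexible edges, which can only decrease the matching. Regular--flexible edges are independent of $G_n^{\mathrm{reg}}$ and present with probability $(\alpha+\alpha_f)/n$. Hence, conditionally on $G_n^{\mathrm{reg}}$, $H_n$ is a bipartite SBM with type masses $(\Gamma,B/2)$ on the left and $(B/2,\Gamma)$ on the right. Its connection matrix is $\begin{bmatrix}0&\alpha+\alpha_f\\ \alpha+\alpha_f&0\end{bmatrix}$ after labeling so that regular-left connects to flexible-right and flexible-left connects to regular-right. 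Instantiating~\eqref{eq:F-2x2-general} with unnormalized masses gives $M'_1=(\alpha+\alpha_f)B/2$ and $M'_2=(\alpha+\alpha_f)\Gamma$, and one checks that the objective is exactly $F_{\mathrm{Cross}}$. Consequently $\matchnum(H_n)/n\to(\Gamma+B/2)-\max F_{\mathrm{Cross}}$ in probability. Since $H_n$ is vertex-disjoint from the matched regular nodes, the union of the two matchings is a matching of $G_n$, so
\[
\frac{\matchnum(G_n)}{n}\ \ge\ \Big(1-\frac B2-\Gamma\Big)+\Big(\Gamma+\frac B2\Big)-\max F_{\mathrm{Cross}}+o_{\mathrm p}(1)=1-\max F_{\mathrm{Cross}}+o_{\mathrm p}(1).
\]
Comparing with Theorem~\ref{thm:matching-rate} for $G_n$ gives $1-\max F_{\TS}\ge 1-\max F_{\mathrm{Cross}}$, which is the claim.

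\emph{Main obstacle.} The delicate step is justifying the application of Theorem~\ref{thm:matching-rate} to $H_n$. Its type classes are not i.i.d.\ labels but random sets determined by $G_n^{\mathrm{reg}}$, and the sides are unbalanced with masses $\Gamma+B/2$ rather than $1$. I would handle this by conditioning on $G_n^{\mathrm{reg}}$: given it, $H_n$ is an SBM with deterministic type counts $(\Gamma n+o(n), Bn/2+o(n))$ and independent edges. The local weak limit is then still the $2$-type Poisson tree (via~\cite[Theorem~3.14]{VdH-IRG}, which allows deterministic type sequences with converging empirical distribution), and Theorem~\ref{thm:bounded-degree} with the truncation argument goes through verbatim with masses in place of probabilities; equivalently one rescales $n'=(\Gamma+B/2)n$ and $c\mapsto c(\Gamma+B/2)$. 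Degenerate cases such as $\Gamma=0$ or zero-degree types are handled by the $\varepsilon$-perturbation remark at the start of Section~\ref{sec:matching}.
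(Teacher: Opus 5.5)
Your proposal is correct, and it takes a genuinely different route from the paper.

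\textbf{What the paper does.} It proves the lemma as a deterministic inequality between two explicit functions. After the substitutions $y_i=e^{-M_it_i}$ and $z_i=e^{-M'_it_i}$, it shows that for every fixed second coordinate $y$,
\[
\max_{y_1}F_{\TS}(y_1,y)\le\max_{z_1}F_{\mathrm{Cross}}(z_1,y)
\]
(Claim~\ref{clm:fixydom}). The proof splits at $y_1=\Gamma/(1-B/2)$ and uses three ingredients: the definition of $\Gamma$ as a maximum, the bound $x\ln(1/x)+x-1\le 0$, and convexity of an auxiliary function.

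\textbf{What you do.} You realize $1-\max F_{\mathrm{Cross}}$ as the limiting size of a feasible matching in $G_n$. The first stage is a maximum matching on the regular--regular subgraph. The second stage is a maximum matching of the leftover regular nodes against the flexible nodes on the opposite side. The inequality then follows from maximality of $\matchnum(G_n)$.

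\textbf{What your route buys.} It explains where $\Gamma$ and $F_{\mathrm{Cross}}$ come from. It also explains why the maximum of $F_{\mathrm{Cross}}$ decouples in Lemma~\ref{lmm:Lambdaid}. Maximality of the first-stage matching forces the absence of $R^{\mathrm L}$--$R^{\mathrm R}$ edges. Hence $H_n$ is the disjoint union of two independent bipartite Erd\H{o}s--R\'enyi graphs, on $(R^{\mathrm L},\text{flexible demand})$ and on $(\text{flexible supply},R^{\mathrm R})$.

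\textbf{What it costs.} You must invoke Theorem~\ref{thm:matching-rate} for three graphs, and outside its stated hypotheses: conditionally fixed rather than i.i.d.\ type counts, zero entries in the connection matrix, and a rescaled $n$. This is standard. For example, couple to deterministic counts $\lfloor\Gamma n\rfloor$ and $\lfloor Bn/2\rfloor$, using that adding or deleting a vertex changes $\matchnum$ by at most one. It also helps to note that $H_n$ is exactly balanced, with $n-\matchnum(G_n^{\mathrm{reg}})$ vertices per side. Still, it is extra probabilistic work that the paper's purely analytic comparison avoids. It also reintroduces the kind of algorithmic coupling this appendix explicitly aims to circumvent, although your second stage is solved exactly rather than greedily. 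Conversely, the paper's argument gives the stronger fiberwise inequality in $y_2$, which your argument does not.

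\textbf{One slip to fix.} With the anti-diagonal matrix $\begin{bmatrix}0&\alpha+\alpha_f\\ \alpha+\alpha_f&0\end{bmatrix}$, the right-side masses must be ordered $(\Gamma,B/2)$, regular first. As written, $(B/2,\Gamma)$ makes the matrix link the two mass-$\Gamma$ classes to each other and the two mass-$B/2$ classes to each other. That graph is not $H_n$, and its objective is not $F_{\mathrm{Cross}}$. With the correct ordering, rescaling to $n'=(\Gamma+B/2)n$ leaves every $M_y$ and every $c_{xy}q_y$ unchanged. The normalized objective is then exactly $F_{\mathrm{Cross}}/(\Gamma+B/2)$ on the same domain $[0,1]^2$, and your conclusion follows as stated.
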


To evaluate the right-hand side, we introduce
\begin{align}\label{eq:Lambdadef}
    \Lambda \triangleq \max_{v\in(0,\Gamma]}
    \Bigl[\frac{B}{2}e^{-(\alpha+\alpha_f)v}
    + v \left(1-\ln\frac{v}{\Gamma} \right)-\Gamma\Bigr],
\end{align}
and establish the following identity.

\begin{lemma}\label{lmm:Lambdaid}
    We have
    \[
        \max_{t_1,t_2\in[0,1]}
        F_{\mathrm{Cross}}(t_1,t_2)
        \,=\,
        \Gamma - \frac{B}{2} + 2\Lambda.
    \]
\end{lemma}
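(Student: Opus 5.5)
The key observation is that $F_{\mathrm{Cross}}$ separates into a function of $t_1$ plus a function of $t_2$. After the change of variables $u\triangleq e^{-M'_1 t_1}$ and $w\triangleq e^{-M'_2 t_2}$, we have $M'_1 t_1=-\ln u$ and $M'_2t_2=-\ln w$. The term $\Gamma\exp(-M'_1 e^{-M'_2t_2})=\Gamma e^{-M'_1 w}$ depends only on $w$, and $\frac B2\exp(-M'_2e^{-M'_1t_1})=\frac B2 e^{-M'_2 u}$ depends only on $u$. Write $c\triangleq\alpha+\alpha_f$, so $M'_1=\frac B2 c$ and $M'_2=\Gamma c$, and rescale $v\triangleq\Gamma u$, $v'\triangleq\frac B2 w$. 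Then
\[
F_{\mathrm{Cross}}=G_1(v)+G_2(v'),\qquad
G_1(v)\triangleq \tfrac B2 e^{-cv}+v\Big(1-\ln\tfrac v\Gamma\Big)-\Gamma,\qquad
G_2(v')\triangleq \Gamma e^{-cv'}+v'\Big(1-\ln\tfrac{2v'}{B}\Big)-\tfrac B2 .
\]
The domains are $v\in[\Gamma e^{-Bc/2},\Gamma]$ and $v'\in[\frac B2 e^{-\Gamma c},\frac B2]$. By \eqref{eq:Lambdadef}, $\Lambda=\sup_{(0,\Gamma]}G_1$. The claim therefore reduces to two facts: (a) the maxima of $G_1,G_2$ over their restricted domains equal their suprema over $(0,\Gamma]$ and $(0,B/2]$; (b) $\max G_2=\max G_1+\Gamma-\frac B2$. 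Note that $\Gamma\ge0$, since the bracket in \eqref{eq:Gammadef} at $y=1$ equals $e^{-2\alpha(1-B/2)}\ge0$. I would dispose of the degenerate case $\Gamma=0$ directly, where $F_{\mathrm{Cross}}$ collapses and the identity is checked by hand, and assume $\Gamma>0$ from now on.

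For (a), differentiate: $G_1'(v)=-\frac B2 c\,e^{-cv}-\ln(v/\Gamma)$. For $v\le\Gamma e^{-Bc/2}$ we have $-\ln(v/\Gamma)\ge \frac B2 c$, so $G_1'(v)\ge\frac B2c(1-e^{-cv})>0$. Hence $G_1$ is increasing below the left endpoint, and the supremum over $(0,\Gamma]$ is attained in the restricted interval. At the right endpoint, $G_1'(\Gamma)=-\frac B2ce^{-c\Gamma}<0$. So every maximizer is an interior critical point, satisfying
\[
v=\Gamma\exp\big(-\tfrac B2 c\, e^{-cv}\big).
\]
The same argument, with the roles of $(\Gamma,\frac B2)$ swapped, applies to $G_2$: its maximizers are interior critical points with $v'=\frac B2\exp(-\Gamma c\,e^{-cv'})$.

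For (b), I would exhibit a bijection between the two critical sets. If $v$ is critical for $G_1$, set $v'\triangleq\frac B2 e^{-cv}$. The critical equation then reads $v=\Gamma e^{-cv'}$, which together with $\ln(2v'/B)=-cv$ shows that $v'$ is critical for $G_2$. The inverse map is $v'\mapsto \Gamma e^{-cv'}$. Substituting these relations gives
\[
G_1(v)=v'+v(1+cv')-\Gamma,\qquad G_2(v')=v+v'(1+cv)-\tfrac B2 .
\]
Hence $G_2(v')-G_1(v)=\Gamma-\frac B2$ at every paired critical point. Since both maxima are attained at critical points, this gives $\max G_2=\Lambda+\Gamma-\frac B2$, and therefore $\max F_{\mathrm{Cross}}=\max G_1+\max G_2=\Gamma-\frac B2+2\Lambda$.

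The main obstacle I expect is the boundary bookkeeping in (a). One must check that the restricted domains coming from $t_i\in[0,1]$ never cut off the maximizer, so that the maximum over $[0,1]^2$ really equals the unrestricted $\Lambda$. One must also confirm that the duality map sends each domain into the other, which follows from $e^{-cv}\in[e^{-c\Gamma},1]$. The algebraic identity in (b) is routine once the substitution is chosen.
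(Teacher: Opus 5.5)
Your proof is correct. It shares the paper's first step: the substitution $v=\Gamma e^{-M'_1t_1}$, $v'=\frac{B}{2}e^{-M'_2t_2}$ splits $F_{\mathrm{Cross}}$ into $G_1(v)+G_2(v')$. Where you differ is in how you establish $\max G_2=\max G_1+\Gamma-\frac{B}{2}$.

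The paper uses the identity $Ae^{-cy}=\max_{x\in(0,A]}[-cxy+x(1-\ln(x/A))]$. This rewrites both $\Lambda$ and $\Lambda'$ as the same double maximum of $J(v_1,v_2)=-cv_1v_2+v_1(1-\ln(v_1/\Gamma))+v_2(1-\ln(2v_2/B))$, shifted by $-\Gamma$ or $-\frac{B}{2}$ respectively. It then interchanges the two maxima, which needs no calculus and no information about where the maximizers lie.

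You instead show that both maxima are attained at interior critical points. You then pair the two critical sets via $v'=\frac{B}{2}e^{-cv}$, $v=\Gamma e^{-cv'}$ and compare values. These pairs are exactly the stationary points of the paper's $J$, so the two arguments express the same duality.

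The interchange is shorter. Your route costs step (a), but that step is not wasted. It rigorously justifies replacing the ranges $[\Gamma e^{-Bc/2},\Gamma]$ and $[\frac{B}{2}e^{-\Gamma c},\frac{B}{2}]$, induced by $(t_1,t_2)\in[0,1]^2$, with $(0,\Gamma]$ and $(0,\frac{B}{2}]$. The paper only asserts this ``by the same extension argument'' as \eqref{eq:domainextension}.

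Two small remarks. First, the degenerate case $\Gamma=0$ never arises: the bracket in \eqref{eq:Gammadef} at $y=1$ equals $e^{-2\alpha(1-B/2)}>0$, so $\Gamma>0$ strictly. Were $\Gamma=0$, $\Lambda$ would be a maximum over an empty set, so there would be nothing to check by hand. Second, the strict inequalities in (a) use $c=\alpha+\alpha_f>0$, which holds because $\alpha_f>0$ in \Cref{prop:TS_lower}.
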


It thus remains to bound $\Gamma$ and $\Lambda$. We first bound
$\Gamma$.

\begin{lemma}\label{lmm:upperboundGamma}
    We have
    \[
        \Big(1-\frac{B}{2}\Big)e^{-2\alpha(1-B/2)}
        \, \leq \,
        \Gamma
        \, \leq \, 
        1-\frac{B}{2}-m_{\mathrm{reg}}^{\mathrm{FMZ}}(B,\alpha),
    \]
    where $m_{\mathrm{reg}}^{\mathrm{FMZ}}(B,\alpha)
   $ is defined in \prettyref{eq:reg}.
\end{lemma}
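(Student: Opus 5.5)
Write $c\triangleq 2\alpha(1-B/2)\ge 0$ and $\Gamma=(1-\frac B2)\max_{y\in(0,1]}G(y)$ with $G(y)\triangleq e^{-cy}+y(1-\ln y)-1$. The plan is to reduce both inequalities to elementary one-variable facts about $G$.

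\textbf{Lower bound.} I would simply evaluate $G$ at $y=1$. This gives $G(1)=e^{-c}+1-1=e^{-c}$, hence $\Gamma\ge (1-\frac B2)e^{-c}=(1-\frac B2)e^{-2\alpha(1-B/2)}$.

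\textbf{Upper bound: rewriting the target.} First I would put the right-hand side in terms of $c$. By \eqref{eq:reg}, $m_{\mathrm{reg}}^{\mathrm{FMZ}}=2(1-\frac B2)(1-\frac c2-e^{-c})$. Therefore
\[
1-\tfrac B2-m_{\mathrm{reg}}^{\mathrm{FMZ}}=(1-\tfrac B2)\bigl(2e^{-c}+c-1\bigr).
\]
So the claim is equivalent to the pointwise inequality $e^{-cy}+y(1-\ln y)\le 2e^{-c}+c$ for all $y\in(0,1]$.

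\textbf{Upper bound: removing the coupling in $y$.} I would use convexity of $y\mapsto e^{-cy}$ on $[0,1]$. A convex function lies below its chord, so $e^{-cy}\le (1-y)+ye^{-c}$. This yields
\[
e^{-cy}+y(1-\ln y)\le 1+y e^{-c}-y\ln y .
\]
The right side is concave in $y$, with derivative $e^{-c}-\ln y-1$. It is therefore maximized at $y=e^{e^{-c}-1}\in(0,1]$, where its value is $1+e^{e^{-c}-1}$.

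\textbf{Upper bound: final scalar inequality.} It remains to show $1+e^{e^{-c}-1}\le 2e^{-c}+c$. Substituting $s=e^{-c}\in(0,1]$, this becomes $h(s)\triangleq 2s-\ln s-1-e^{s-1}\ge 0$. We have $h(1)=0$ and $h'(s)=2-1/s-e^{s-1}$, so $h'(1)=0$. Moreover $h''(s)=1/s^2-e^{s-1}\ge 1-1\ge 0$ on $(0,1]$. Hence $h$ is convex on $(0,1]$ with a stationary point at $s=1$, which gives $h\ge h(1)=0$ on $(0,1]$. Multiplying through by $(1-\frac B2)$ completes the upper bound.

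\textbf{Expected obstacle.} The only delicate point is choosing a relaxation of $e^{-cy}$ that stays tight at $y=1$ and at $c=0$, where both sides equal $2$. The chord bound does this, and it turns the problem into the single-variable convexity check on $h$. If the chord bound had turned out too loose, my fallback would have been to handle the stationarity condition $-\ln y=ce^{-cy}$ directly.
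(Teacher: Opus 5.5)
Your proof is correct and follows essentially the same route as the paper. You evaluate at $y=1$ for the lower bound, use the chord bound $e^{-cy}\le 1-y+ye^{-c}$, maximize the concave function $y(e^{-c}-\ln y)$ at $y=e^{e^{-c}-1}$, and close with the scalar inequality $e^{s-1}\le 2s-\ln s-1$ on $(0,1]$. The only cosmetic difference is how you certify $h\ge 0$: you show $h''(s)=1/s^2-e^{s-1}\ge 0$ directly, whereas the paper uses $h'''<0$ together with $h(1)=h'(1)=h''(1)=0$.
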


Leveraging the lower bound on $\Gamma$ from
Lemma~\ref{lmm:upperboundGamma}, we obtain the following upper
bound on $\Lambda$.

\begin{lemma}\label{lmm:tentative}
    We have
    \[
        2\Lambda \,\leq\, C_{\mathrm{FMZ}}(B,\alpha,\alpha_f),
    \]
    where $C_{\mathrm{FMZ}}(B,\alpha,\alpha_f)$ is defined in \prettyref{eq:C_fmz}.
\end{lemma}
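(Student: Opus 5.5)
The plan is to rewrite $\Lambda$ so that $C_{\mathrm{FMZ}}$ appears as an integral, and to bound the integrand pointwise. Write $\kappa\triangleq\alpha+\alpha_f$ and $\Gamma_0\triangleq(1-\frac B2)e^{-2\alpha(1-B/2)}$; by Lemma~\ref{lmm:upperboundGamma} we have $\Gamma_0\le\Gamma$. Note the identity
\[
\tfrac12 C_{\mathrm{FMZ}}=\frac{1}{\alpha_f}\bigl(e^{\alpha_f B/2}-1\bigr)e^{-\alpha_f\Gamma_0}=\int_0^{B/2}e^{-\alpha_f(\Gamma_0-u)}\,\diff u .
\]
So it suffices to show $\Lambda\le\int_0^{B/2}e^{-\alpha_f(\Gamma_0-u)}\diff u$.

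\textbf{Step 1 (reduce $\Gamma$ to $\Gamma_0$).} Regard $\Lambda=\Lambda(\Gamma)$ as a function of the parameter $\Gamma$ and substitute $v=\Gamma w$, $w\in(0,1]$. The objective becomes
\[
g_\Gamma(w)=\tfrac B2 e^{-\kappa\Gamma w}+\Gamma\bigl(w(1-\ln w)-1\bigr).
\]
Both terms are nonincreasing in $\Gamma$ for each fixed $w$, because $w(1-\ln w)\le 1$. Hence $\Lambda(\Gamma)\le\Lambda(\Gamma_0)$, and we may assume $\Gamma=\Gamma_0$.

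\textbf{Step 2 (integral representation).} For $v\in(0,\Gamma_0]$, the fundamental theorem of calculus applied to both terms gives
\[
g(v)=\tfrac B2 e^{-\kappa\Gamma_0}+\int_v^{\Gamma_0}\Bigl[\tfrac B2\kappa e^{-\kappa s}-\ln\tfrac{\Gamma_0}{s}\Bigr]\diff s .
\]
This uses $\frac{\diff}{\diff v}\,v(1-\ln(v/\Gamma_0))=-\ln(v/\Gamma_0)$ and $v(1-\ln(v/\Gamma_0))-\Gamma_0=0$ at $v=\Gamma_0$. Then substitute $u=\Gamma_0-s$. The goal becomes: the boundary term plus the positive part of the integrand, integrated over $u\in[0,\Gamma_0]$, is at most $\int_0^{B/2}e^{-\alpha_f(\Gamma_0-u)}\diff u$.

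\textbf{Step 3 (pointwise comparison).} I would first use $\ln(\Gamma_0/s)\ge 1-s/\Gamma_0=u/\Gamma_0$. This shows the integrand is negative once $u/\Gamma_0>\frac B2\kappa e^{-\kappa(\Gamma_0-u)}$, so only a window of small $u$ contributes. On that window, I would drop the logarithm and use the elementary bound $\frac B2\kappa e^{-\kappa(\Gamma_0-u)}\le e^{-\alpha_f(\Gamma_0-u)}$, trading the factor $\kappa$ against the exponent via $\kappa e^{-\alpha(\Gamma_0-u)}$ and absorbing constants. The boundary term $\frac B2 e^{-\kappa\Gamma_0}$ should be absorbed into the $u\in[0,B/2]$ portion of the target integral, whose integrand is at least $e^{-\alpha_f\Gamma_0}$.

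\textbf{Main obstacle.} Step 3 is the hard part. Showing that the effective window has length at most $B/2$, with the constants working out exactly, is delicate. If the pointwise route fails, I would instead use the first-order condition at the maximizer $v^*$, namely $\ln(\Gamma_0/v^*)=\frac B2\kappa e^{-\kappa v^*}$. Substituting it into $g(v^*)$ and applying concavity of $v\mapsto v(1-\ln(v/\Gamma_0))$ gives a closed-form bound on $\Lambda$, which I would then compare with $\frac1{\alpha_f}(e^{\alpha_f B/2}-1)e^{-\alpha_f\Gamma_0}$ through $e^{x}-1\ge x$.
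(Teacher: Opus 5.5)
Your identity $\tfrac12 C_{\mathrm{FMZ}}=\int_0^{B/2}e^{-\alpha_f(\Gamma_0-u)}\,\diff u$ is correct. Step~1 is also correct; the paper does the same reduction at the end, via $e^{-\alpha_f\Gamma}\le e^{-\alpha_f\Gamma_0}$. Steps~2--3, however, cannot work.

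For fixed $v$ your representation is exact: $g(v)=\tfrac B2 e^{-\kappa v}-\int_v^{\Gamma_0}\ln(\Gamma_0/s)\,\diff s$. The loss comes from replacing $\max_v g(v)$ by $g(\Gamma_0)+\int_0^{\Gamma_0}(-g')_+$, i.e.\ by the total decrease of $g$. For large $\kappa$, $g$ is not unimodal. Starting from $g(0^+)=\tfrac B2-\Gamma_0$, it falls by nearly $\tfrac B2$ on $s\lesssim(\ln\kappa)/\kappa$ and then climbs back to $g(\Gamma_0)$. That drop is counted in your bound, although it contributes nothing to $\max g$.

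Concretely, take $\alpha=0$, $B=\tfrac12$, $\alpha_f=100$, so $\Gamma=\Gamma_0=\tfrac34$. The target is $\tfrac1{100}(e^{25}-1)e^{-75}\approx 2\cdot 10^{-24}$. Yet $\int_0^{3/4}\bigl[25e^{-100s}-\ln\tfrac{3}{4s}\bigr]_+\diff s\approx 0.12$. So the reduced inequality you set up is false, and no Step~3 argument can rescue it.

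The Step~3 bound $\tfrac B2\kappa e^{-\kappa(\Gamma_0-u)}\le e^{-\alpha_f(\Gamma_0-u)}$ is false on its own. It is equivalent to $\tfrac B2\kappa\le e^{\alpha(\Gamma_0-u)}$, which fails for $\alpha=0$ and $\alpha_f>2/B$. Moreover, in this regime the window where the integrand is positive lies near $u=\Gamma_0$ (small $s$), far from $[0,B/2]$.

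The fallback is too unspecific to assess. If ``compare through $e^x-1\ge x$'' means $e^{\alpha_f B/2}-1\ge\alpha_f B/2$, it cannot work. That lowers the target to $\tfrac B2 e^{-\alpha_f\Gamma_0}$. But $\Lambda>g(\Gamma_0)=\tfrac B2 e^{-\kappa\Gamma_0}$ because $g'(\Gamma_0)<0$, and this already equals $\tfrac B2 e^{-\alpha_f\Gamma_0}$ when $\alpha=0$. Also, when $\tfrac B2>\Gamma_0$ one has $\Lambda\ge\tfrac B2-\Gamma_0>0$ independently of $\alpha_f$. So the full growth $e^{\alpha_f B/2}$ must be retained.

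The missing idea is a bound that holds at each candidate point separately and keeps that exponential growth. The paper first bounds $e^{-(\alpha+\alpha_f)v}\le e^{-\alpha_f v}$ and rescales $u=\alpha_f v$. It then proves that for $Y\ge0$, $Z>0$, $u\in(0,Z]$,
\[
Ye^{-u}+u\bigl(1-\ln(u/Z)\bigr)-Z\le e^{-Z}(e^Y-1),
\]
and applies it with $Y=\alpha_f B/2$, $Z=\alpha_f\Gamma$.

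The key move is to apply $e^x\ge1+x$ at the shifted point $x=Y-Z+u$. This gives $Ye^{-u}\le e^{Y-Z}-e^{-u}+(Z-u)e^{-u}$ and eliminates $Y$ entirely. What remains is $\int_u^Z(1-u/t)\,\diff t\ge\int_u^Z(e^{-u}-e^{-t})\,\diff t$. This follows from $1-e^{-u}\ge\tfrac ut(1-e^{-t})\ge\tfrac ut-e^{-t}$, by concavity of $1-e^{-x}$.

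This is an integral comparison of the kind you wanted, but it runs over $[u,Z]$ for each fixed $u$ and uses no positive parts, which is exactly what your Step~2 gives up. Dividing by $\alpha_f/2$ and using $\Gamma\ge\Gamma_0$ then yields $2\Lambda\le C_{\mathrm{FMZ}}$.
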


Combining Lemmas~\ref{lmm:dominationresult}, \ref{lmm:Lambdaid} and \ref{lmm:tentative}
together with the upper bound on $\Gamma$ from
Lemma~\ref{lmm:upperboundGamma} gives
\[
    \max_{t_1,t_2\in[0,1]} F_{\mathrm{TS}}(t_1,t_2)
    \,\leq\, \Gamma - \frac{B}{2} + C_{\mathrm{FMZ}}
    \,\leq\, 1 - B - m_{\mathrm{reg}}^{\mathrm{FMZ}} + C_{\mathrm{FMZ}}
    \,=\, 1 - L_{\mathrm{FMZ}},
\]
which completes the proof.

\subsubsection{Proof of~\texorpdfstring{\Cref{lmm:dominationresult}}{}}

We introduce the monotonic change of variables $y_i = e^{-M_i t_i}$
for $F_{\TS}$ and $z_i = e^{-M'_i t_i}$ for $F_{\mathrm{Cross}}$.
By a domain-extension argument analogous to~\eqref{eq:domainextension},
we may work on $[0,1]^2$ throughout. The two objectives become
\begin{align}\label{eq:FTS-y}
    F_{\TS}(y_1, y_2)
    &=  \Big(1-\frac{B}{2} \Big)
       \exp  \Big({-2\alpha \Big(1-\frac{B}{2} \Big)y_1
       -(\alpha+\alpha_f)\frac{B}{2}y_2} \Big) \nonumber\\[0.3em]
    &\quad + \frac{B}{2}
       \exp  \Big({-(\alpha+\alpha_f) \Big(1-\frac{B}{2} \Big)y_1
       - \alpha_f B y_2} \Big) \nonumber\\[0.3em]
    &\quad +  \Big(1-\frac{B}{2} \Big)y_1(1-\ln y_1)
           + \frac{B}{2}y_2(1-\ln y_2) - 1,
\end{align}
and
\begin{align}\label{eq:FCross-z}
    F_{\mathrm{Cross}}(z_1, z_2)
    &= \Gamma\exp  \left({-\frac{B}{2}(\alpha+\alpha_f)z_2} \right)
     + \frac{B}{2}\exp  \left({-\Gamma(\alpha+\alpha_f)z_1} \right) \nonumber\\
    &\quad + \Gamma\,z_1(1-\ln z_1)
           + \frac{B}{2}\,z_2(1-\ln z_2)
           -  \left(\Gamma + \frac{B}{2} \right).
\end{align}
The lemma follows from the following claim by optimizing both sides
over $y\in[0,1]$.

\begin{claim}\label{clm:fixydom}
    For every fixed $y\in[0,1]$,
    \[
        \max_{y_1\in[0,1]} F_{\TS}(y_1,y)
        \,\leq\,
        \max_{z_1\in[0,1]} F_{\mathrm{Cross}}(z_1,y).
    \]
\end{claim}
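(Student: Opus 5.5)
The plan is a pointwise comparison. Fix $y\in[0,1]$ and an arbitrary $y_1\in[0,1]$, and exhibit a $z_1=z_1(y_1)\in[0,1]$ with $F_{\TS}(y_1,y)\le F_{\mathrm{Cross}}(z_1,y)$. Taking the maximum over $y_1$ on the left then gives Claim~\ref{clm:fixydom}.

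Write $h(u)=u(1-\ln u)$, $E\triangleq e^{-(\alpha+\alpha_f)(B/2)y}\in(0,1]$ and $a\triangleq e^{-2\alpha(1-B/2)y_1}\in(0,1]$. The common term $\frac{B}{2}h(y)$ cancels. The remaining part of $F_{\TS}$ coming from regular supply nodes is $(1-\frac B2)\,[aE+h(y_1)-1]$. Split it as
\[
aE+h(y_1)-1 \;=\; E\,\bigl(a+h(y_1)-1\bigr)\;-\;(1-E)\bigl(1-h(y_1)\bigr).
\]
By the definition~\eqref{eq:Gammadef} of $\Gamma$, $(1-\frac B2)(a+h(y_1)-1)\le\Gamma$. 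Since $h\le 1$ on $[0,1]$, the second piece is nonpositive. Hence this part is at most $\Gamma E$, which is exactly the first term of $F_{\mathrm{Cross}}(z_1,y)$. For the flexible part, bound $e^{-\alpha_f B y}\le 1$ in the second term of $F_{\TS}$.

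It then remains to show
\[
\tfrac B2 e^{-(\alpha+\alpha_f)(1-B/2)y_1}-(1-\tfrac B2)(1-E)\bigl(1-h(y_1)\bigr)+(\text{slack})\;\le\;\tfrac B2 e^{-\Gamma(\alpha+\alpha_f)z_1}-\Gamma\bigl(1-h(z_1)\bigr),
\]
where the slack is $E\,[(1-\frac B2)(a+h(y_1)-1)-\Gamma]\le 0$. The natural coupling matches the exponents: take $\Gamma z_1=(1-\frac B2)y_1$, i.e.\ $z_1=(1-\frac B2)y_1/\Gamma$. With this choice the two $\frac B2$-exponentials coincide, and one must show $\Gamma(1-h(z_1))$ is dominated by the nonpositive slack terms. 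Equivalently, rewrite everything in the scaled variable $v=\Gamma z_1$, so that the right side reads $\frac B2 e^{-(\alpha+\alpha_f)v}+v(1-\ln\frac v\Gamma)-\Gamma$, matching the form of $\Lambda$ in~\eqref{eq:Lambdadef}. Compare this with $(1-\frac B2)[\,a+h(y_1)-1]$ evaluated at $v=(1-\frac B2)y_1$, using concavity of $h$ and the fact that $\Gamma$ is the maximum of that bracket.

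When $(1-\frac B2)y_1>\Gamma$, the scaled point leaves $[0,1]$. In that case I would instead take $z_1=1$ and use $\Gamma\le(1-\frac B2)e^{-2\alpha(1-B/2)}$ (the lower bound of Lemma~\ref{lmm:upperboundGamma}, provable directly from~\eqref{eq:Gammadef} at $y=1$) together with monotonicity of $u\mapsto e^{-(\alpha+\alpha_f)u}$. This shows the flexible exponential on the left is already no larger than $\frac B2 e^{-\Gamma(\alpha+\alpha_f)}$.

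The main obstacle is this middle inequality: making the $-\Gamma(1-h(z_1))$ penalty on the Cross side small enough, via the specific coupling $z_1\leftrightarrow y_1$, relative to the penalty $(1-\frac B2)(1-h(y_1))$ paid on the TS side. I expect it to reduce to a one-variable inequality between $\Gamma h\bigl((1-\frac B2)y_1/\Gamma\bigr)$ and $(1-\frac B2)h(y_1)+\bigl[(1-\frac B2)a-\ldots\bigr]$. The identity $\Gamma h(v/\Gamma)=v(1-\ln v+\ln\Gamma)$ should make this tractable, since it is $\ln\Gamma$ versus the first-order condition defining the maximizer in $\Gamma$. If the scaled coupling fails, the fallback is to choose $z_1$ as the maximizer of the Cross objective in $z_1$ directly and argue through first-order conditions.
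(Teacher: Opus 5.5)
Your reduction matches the paper's: you split the regular-supply term with weight $E$ (the paper's $\beta$), bound $e^{-\alpha_f B y}\le 1$, split cases at $(1-\frac B2)y_1=\Gamma$, take $z_1=1$ in the first case, and use the coupling $z_1=(1-\frac B2)y_1/\Gamma$ in the second. The gap is that the second case, which carries the whole content of the claim, is never proved. You call it the main obstacle, say what you expect it to reduce to, and offer a fallback, but give no argument. The tools you point to (concavity of $h$, the first-order condition for the maximizer defining $\Gamma$) are not what closes it.

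With $c=1-\frac B2$ and $x_0=\Gamma/c\in(0,1]$, the missing observation is that your coupling gives $\Gamma h(z_1)=c\,h(y_1)+c\,y_1\ln(\Gamma/c)$. So the $h(y_1)$ terms cancel exactly, and what must be shown is
\[
(1-E)\Bigl[\Gamma-c+c\,y_1\ln\tfrac{c}{\Gamma}\Bigr]+E\Bigl[c\,e^{-2\alpha c y_1}-c+c\,y_1\ln\tfrac{c}{\Gamma}\Bigr]\le 0,\qquad y_1\in[0,x_0].
\]
This is affine in $E$, so it suffices that each bracket (the paper's $H_0$ and $H_1$) is nonpositive on $[0,x_0]$.

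\begin{itemize}
\item The first bracket is affine and nondecreasing in $y_1$, since $\Gamma\le c$. At $y_1=x_0$ it equals $c\,(h(x_0)-1)\le 0$.
\item The second bracket is convex in $y_1$, so its maximum is at an endpoint. It vanishes at $y_1=0$. At $y_1=x_0$ it equals $c\,[e^{-2\alpha c x_0}+h(x_0)-1]-\Gamma\le 0$, by the definition of $\Gamma$ as a maximum applied at the specific point $x_0=\Gamma/c$.
\end{itemize}

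No first-order condition is needed, but without these steps the proposal does not establish the claim.

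There is also a smaller issue in the case $(1-\frac B2)y_1>\Gamma$. The inequality $\Gamma\le(1-\frac B2)e^{-2\alpha(1-B/2)}$ you cite is the reverse of the lower bound in Lemma~\ref{lmm:upperboundGamma}. It is false for $\alpha>0$, because the maximand in~\eqref{eq:Gammadef} has negative derivative at $y=1$. It is also unnecessary. The case hypothesis plus monotonicity of $u\mapsto e^{-(\alpha+\alpha_f)u}$ already gives $\frac B2 e^{-(\alpha+\alpha_f)(1-B/2)y_1}\le\frac B2 e^{-(\alpha+\alpha_f)\Gamma}$. Combined with your bound of the regular part by $\Gamma E$, this yields $F_{\TS}(y_1,y)\le F_{\mathrm{Cross}}(1,y)$, exactly as in the paper.
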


\begin{proof}[Proof of~\Cref{clm:fixydom}]
Introduce $\beta\triangleq e^{-(\alpha+\alpha_f)(B/2)y}$ and
\begin{align}\label{eq:Adef}
    A(y_1)
    &\triangleq
    \beta \left(1-\frac{B}{2} \right)
    \Bigl[e^{-2\alpha(1-B/2)y_1} + y_1(1-\ln y_1) - 1\Bigr]
    + \frac{B}{2}e^{-(\alpha+\alpha_f)(1-B/2)y_1} \nonumber\\
    &\quad + (1-\beta) \left(1-\frac{B}{2} \right)\bigl[y_1(1-\ln y_1)-1\bigr].
\end{align}
Setting $y_2=y$ in~\eqref{eq:FTS-y}, factoring $\beta$ from the
first exponential, bounding $e^{-\alpha_f By}\leq 1$ in the second,
writing $-1=-(1-B/2)-(B/2)$, and splitting
$(1-B/2)[y_1(1-\ln y_1)-1]$ as $\beta(\cdot)+(1-\beta)(\cdot)$,
we obtain
\[
    F_{\TS}(y_1,y) \,\leq\, A(y_1) + \frac{B}{2}y(1-\ln y) - \frac{B}{2}.
\]
Similarly, defining
\begin{align}\label{eq:Bdef}
    B(z_1)
    \,\triangleq\,
    \Gamma\beta
    + \frac{B}{2}\exp  \left({-\Gamma(\alpha+\alpha_f)z_1} \right)
    + \Gamma\bigl[z_1(1-\ln z_1)-1\bigr],
\end{align}
and using $\Gamma e^{-(B/2)(\alpha+\alpha_f)y}=\Gamma\beta$,
we have exactly
$F_{\mathrm{Cross}}(z_1,y)=B(z_1)+\frac{B}{2}y(1-\ln y)-\frac{B}{2}$.
The claim reduces to $\max_{y_1}A(y_1)\leq\max_{z_1}B(z_1)$,
which we establish by splitting into two cases.

\paragraph{Case 1: $y_1\geq\Gamma/(1-B/2)$.}
From~\eqref{eq:Adef}, using the bound $\leq\Gamma$
from~\eqref{eq:Gammadef} on the first bracket, monotonicity of
$e^{-(\alpha+\alpha_f)\cdot}$ with $(1-B/2)y_1\geq\Gamma$ on the
second term, and $1-\beta\geq 0$ with $y_1(1-\ln y_1)-1\leq 0$
for $y_1\in(0,1]$ (since $g(y)\triangleq y(1-\ln y)$ has
$g'(y)=-\ln y\geq 0$ with $g(1)=1$) on the third:
\[
    A(y_1)
    \,\leq\, \beta\Gamma + \frac{B}{2}e^{-(\alpha+\alpha_f)\Gamma}
    \,=\, B(1)
    \,\leq\, \max_{z_1}B(z_1),
\]
where the equality uses $\Gamma[1(1-\ln 1)-1]=0$.

\paragraph{Case 2: $y_1\leq\Gamma/(1-B/2)$.}
Note $\Gamma>0$ from~\eqref{eq:Gammadef} since
$\Gamma\geq(1-B/2)e^{-2\alpha(1-B/2)}>0$.
Set $z_1\triangleq\frac{1-B/2}{\Gamma}y_1\in[0,1]$ and
$H(y_1)\triangleq A(y_1)-B(z_1)$. Since
$\Gamma(\alpha+\alpha_f)z_1=(\alpha+\alpha_f)(1-B/2)y_1$,
the exponential terms $\frac{B}{2}e^{-(\alpha+\alpha_f)(1-B/2)y_1}$
in $A(y_1)$ and $\frac{B}{2}e^{-\Gamma(\alpha+\alpha_f)z_1}$
in $B(z_1)$ cancel. Collecting the remaining terms gives
\[
    H(y_1)
    = H_0(y_1)+\beta\Bigl[(1-B/2)e^{-2\alpha(1-B/2)y_1}-\Gamma\Bigr]
    = (1-\beta)H_0(y_1) + \beta H_1(y_1),
\]
where the second equality uses $H_1-H_0=(1-B/2)e^{-2\alpha(1-B/2)y_1}-\Gamma$, and
\begin{align*}
    H_0(y_1)
    &\triangleq  \Big(1-\frac{B}{2} \Big) \, y_1\ln\frac{1-B/2}{\Gamma}
    + \Gamma -  \Big(1-\frac{B}{2} \Big), \\[0.3em]
    H_1(y_1)
    &\triangleq  \Big(1-\frac{B}{2} \Big) \,
    e^{-2\alpha(1-B/2)y_1}
    +  \Big(1-\frac{B}{2} \Big) \, y_1\ln\frac{1-B/2}{\Gamma}
    -  \Big(1-\frac{B}{2} \Big).
\end{align*}
We show $H_0\leq 0$ and $H_1\leq 0$ separately.

\paragraph{Bound on $H_0$.}
Since $\ln\frac{1-B/2}{\Gamma}\geq 0$, $H_0$ is increasing in
$y_1$. Setting $x\triangleq\Gamma/(1-B/2)\in(0,1]$ and using
$y_1\leq\Gamma/(1-B/2)$:
\[
    H_0(y_1)
    \,\leq\, \Gamma\ln\frac{1-B/2}{\Gamma} + \Gamma -  \left(1-\frac{B}{2} \right)
    \,=\,  \left(1-\frac{B}{2} \right)\Bigl[x\ln\tfrac{1}{x}+x-1\Bigr]
    \,\leq\, 0,
\]
where $f(x)\triangleq x\ln(1/x)+x-1\leq 0$ for $x\in(0,1]$
since $f'(x)=-\ln x\geq 0$ with $f(1)=0$.

\paragraph{Bound on $H_1$.}
If $\alpha=0$, then $\Gamma=(1-B/2)\max_{y\in(0,1]}\{y(1-\ln y)\}
=1-B/2$, so $H_1(y_1)=0$.
For $\alpha>0$, $H_1''(y_1)=4\alpha^2(1-B/2)^3
e^{-2\alpha(1-B/2)y_1}>0$, so $H_1$ is convex and its maximum
on $[0,\Gamma/(1-B/2)]$ is at an endpoint. At $y_1=0$:
$H_1(0)=0$. At $y_1=\Gamma/(1-B/2)$, setting $x=\Gamma/(1-B/2)$:
\[
    H_1  \left(\frac{\Gamma}{1-B/2} \right)
    = \underbrace{ \left(1-\frac{B}{2} \right)
      \Bigl[e^{-2\alpha(1-B/2)x}+x(1-\ln x)-1\Bigr]}_{\leq\,\Gamma
      \text{ by~\eqref{eq:Gammadef}}}
      - \underbrace{ \left(1-\frac{B}{2} \right)x}_{=\,\Gamma}
    \, \leq \, 0.
\]
Since $H(y_1)\leq 0$ for all $y_1\in[0,\Gamma/(1-B/2)]$,
we have $A(y_1)\leq B \left(\tfrac{1-B/2}{\Gamma}y_1\right)
\leq\max_{z_1}B(z_1)$. Combining with Case~1 completes the proof.
\end{proof}

\subsubsection{Proof of~\texorpdfstring{\Cref{lmm:Lambdaid}}{}}

With the change of variables $v_1=\Gamma e^{-M'_1 t_1}\in(0,\Gamma]$
and $v_2=\frac{B}{2}e^{-M'_2 t_2}\in(0,B/2]$ (where the domains
follow from the same extension argument as~\eqref{eq:domainextension}),
substituting $z_1=v_1/\Gamma$ and $z_2=v_2/(B/2)$
into~\eqref{eq:FCross-z} gives
\begin{align*}
    F_{\mathrm{Cross}}(v_1,v_2)
    &= \Gamma e^{-(\alpha+\alpha_f)v_2}
       + v_2 \left(1-\ln\frac{v_2}{B/2} \right)
       + \frac{B}{2}e^{-(\alpha+\alpha_f)v_1}
       + v_1 \left(1-\ln\frac{v_1}{\Gamma} \right)
       - \Gamma - \frac{B}{2}.
\end{align*}
Since there are no cross terms in $v_1$ and $v_2$, the maximum
decouples as
\begin{align*}
    \max_{t_1,t_2} F_{\mathrm{Cross}}(t_1,t_2)
    &= \underbrace{\max_{v_1\in(0,\Gamma]}
       \Bigl\{\frac{B}{2}e^{-(\alpha+\alpha_f)v_1}
       + v_1 \left(1-\ln\frac{v_1}{\Gamma} \right)-\Gamma\Bigr\}}_{=\,\Lambda
       \text{ by~\eqref{eq:Lambdadef}}} \\
    &\quad + \underbrace{\max_{v_2\in(0,B/2]}
       \Bigl\{\Gamma e^{-(\alpha+\alpha_f)v_2}
       + v_2 \left(1-\ln\frac{v_2}{B/2} \right)-\frac{B}{2}\Bigr\}}_{\triangleq\,\Lambda'}.
\end{align*}
It remains to show $\Lambda'=\Gamma-B/2+\Lambda$.
To this end, we use the identity: for any $A>0$, $c>0$, $y\geq 0$,
\begin{equation}\label{eq:trickeq}
    Ae^{-cy} = \max_{x\in(0,A]}\Bigl[-cxy+x \left(1-\ln\frac{x}{A} \right)\Bigr],
\end{equation}
where the maximum is attained at $x=Ae^{-cy}\in(0,A]$.
Applying~\eqref{eq:trickeq} to $\Lambda$ with $A=B/2$,
$c=\alpha+\alpha_f$, $y=v_1$, and to $\Lambda'$ with $A=\Gamma$,
$c=\alpha+\alpha_f$, $y=v_2$:
\begin{align*}
    \Lambda
    &= \max_{v_1\in(0,\Gamma]}\max_{v_2\in(0,B/2]}
    \Bigl[-(\alpha+\alpha_f)v_1v_2
    + v_2 \left(1-\ln\frac{v_2}{B/2} \right)
    + v_1 \left(1-\ln\frac{v_1}{\Gamma} \right)-\Gamma\Bigr], \\
    \Lambda'
    &= \max_{v_2\in(0,B/2]}\max_{v_1\in(0,\Gamma]}
    \Bigl[-(\alpha+\alpha_f)v_1v_2
    + v_1 \left(1-\ln\frac{v_1}{\Gamma} \right)
    + v_2 \left(1-\ln\frac{v_2}{B/2} \right)-\frac{B}{2}\Bigr].
\end{align*}
The two objectives are identical except for the constants $-\Gamma$
and $-B/2$, over the same domain, so $\Lambda-\Lambda'=B/2-\Gamma$,
giving $\Lambda'=\Gamma-B/2+\Lambda$ and hence
\[
    \max_{t_1,t_2}F_{\mathrm{Cross}}(t_1,t_2)
    = \Lambda + \Lambda'
    = \Gamma - \frac{B}{2} + 2\Lambda,
\]
which completes the proof.

\subsubsection{Proof of~\texorpdfstring{\Cref{lmm:upperboundGamma}}{}}

Recall
\begin{align*}
    \Gamma
    \,\triangleq\,
     \left(1-\frac{B}{2} \right)\,\max_{y\in(0,1]}
    \Bigl\{e^{-2\alpha(1-B/2)y}+y(1-\ln y)-1\Bigr\}.
\end{align*}

\paragraph{Lower bound.}
Evaluating at $y=1$ gives $\Gamma\geq(1-B/2)e^{-2\alpha(1-B/2)}$.

\paragraph{Upper bound.}
By convexity of $y\mapsto e^{-\lambda y}$ and the inequality
$e^{-\lambda y}\leq 1-y(1-e^{-\lambda})$ for $y\in[0,1]$, we have
\[
    e^{-2\alpha(1-B/2)y}+y(1-\ln y)-1
    \,\leq\, y \left(e^{-2\alpha(1-B/2)}-\ln y \right)
    \,\triangleq\, g(y).
\]
Since $g''(y)=-1/y<0$, $g$ is strictly concave. The unique maximizer
on $(0,1]$ is 
\[
y^*=\exp(e^{-2\alpha(1-B/2)}-1)\in(0,1]\,,
\]
with
\[
    g(y^*) = \exp  \left(e^{-2\alpha(1-B/2)}-1 \right).
\]
We apply $e^{x-1}\leq 2x-\ln x-1$ for $x\in(0,1]$. To verify,
let $h(x)\triangleq 2x-\ln x-1-e^{x-1}$. One checks
$h(1)=h'(1)=h''(1)=0$ and $h'''(x)<0$, so
$h''(x)>0$, $h'(x)<0$, and $h(x)>0$ on $(0,1)$.
Setting $x=e^{-2\alpha(1-B/2)}\in(0,1]$ so that $-\ln x=2\alpha(1-B/2)$:
\[
    g(y^*)
    \,\leq\, 2e^{-2\alpha(1-B/2)}+2\alpha \left(1-\frac{B}{2} \right)-1.
\]
Multiplying by $(1-B/2)$ and rearranging gives
\begin{align*}
    \Gamma
    \,\leq\, 1-\frac{B}{2}
    -\underbrace{2 \left(1-\frac{B}{2} \right)
     \left(1- \left(1-\frac{B}{2} \right)\alpha-e^{-2\alpha(1-B/2)} \right)}_{
    =\,m_{\mathrm{reg}}^{\mathrm{FMZ}}(B,\alpha)},
\end{align*}
which completes the proof.

\subsubsection{Proof of Lemma~\ref{lmm:tentative}}
\label{sec:pf-tentative}

First, since $\alpha\geq 0$: $e^{-(\alpha+\alpha_f)v}\leq e^{-\alpha_f v}$, therefore:
\[
    \Lambda\leq\max_{v\in(0,\Gamma]}
    \Bigl[\tfrac{B}{2}e^{-\alpha_f v}+v(1-\ln(v/\Gamma))-\Gamma\Bigr].
\]
Substituting $u=\alpha_f v$ 
and multiplying by $\alpha_f$:
\begin{equation}\label{eq:lambda-norm-cmp}
    \alpha_f\Lambda
    \,\leq\,
    \max_{u\in(0,\alpha_f\Gamma]}
    \Bigl[\tfrac{\alpha_f B}{2}e^{-u}
    +u \left(1-\ln\tfrac{u}{\alpha_f\Gamma} \right)
    -\alpha_f\Gamma\Bigr].
\end{equation}
We now prove the following auxiliary inequality.
\begin{claim}\label{clm:aux-ineq}
For $Y\geq 0$, $Z>0$, and $u\in(0,Z]$:
\begin{equation}\label{eq:aux-cmp}
    Ye^{-u}+u \left(1-\ln\tfrac{u}{Z} \right)-Z
    \,\leq\,
    e^{-Z}(e^Y-1).
\end{equation}
\end{claim}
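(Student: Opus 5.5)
The plan is to treat \eqref{eq:aux-cmp} as a two-variable inequality and reduce it to one variable by minimizing over $Y$ first. For fixed $u\in(0,Z]$ set
\[
    D(Y)\triangleq e^{-Z}(e^{Y}-1)-Ye^{-u}-u\Bigl(1-\ln\tfrac{u}{Z}\Bigr)+Z ,
\]
so the claim is $D(Y)\ge 0$ for all $Y\ge 0$. The left-hand side of \eqref{eq:aux-cmp} is linear in $Y$, and the right-hand side is convex in $Y$. Hence $D$ is convex, and $D'(Y)=e^{Y-Z}-e^{-u}$ vanishes at $Y^\ast=Z-u$. This point is admissible because $u\le Z$. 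It therefore suffices to show $h(u)\triangleq D(Z-u)\ge 0$ for $u\in(0,Z]$. Substituting $Y^\ast=Z-u$ gives
\[
    h(u)=e^{-u}-e^{-Z}-(Z-u)e^{-u}-u+u\ln\tfrac{u}{Z}+Z .
\]

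Next I would study $h$ on $(0,Z]$ through its endpoints and critical points. At $u=Z$ we get $h(Z)=0$. As $u\to0^+$ we get $h(u)\to 1-e^{-Z}\ge 0$, using $u\ln u\to 0$. Differentiating gives
\[
    h'(u)=(Z-u)e^{-u}+\ln\tfrac{u}{Z}.
\]
My first instinct would be to show $h'\le 0$, so that $h$ decreases to $h(Z)=0$. This fails, however: for $Z$ large and $u$ fixed, $(Z-u)e^{-u}$ grows like $Z$ while $\ln(Z/u)$ grows only like $\ln Z$. So $h$ need not be monotone. This is the main obstacle, and I expect it is why a direct monotonicity argument does not suffice.

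To handle it, I would instead evaluate $h$ at any interior critical point $u_c$, where $\ln(Z/u_c)=(Z-u_c)e^{-u_c}$. Substituting $u_c\ln(u_c/Z)=-u_c(Z-u_c)e^{-u_c}$ into $h$ and regrouping should give
\[
    h(u_c)=\bigl(e^{-u_c}-e^{-Z}\bigr)+(Z-u_c)\bigl(1-(1+u_c)e^{-u_c}\bigr).
\]
Both terms are nonnegative. The first is nonnegative since $u_c\le Z$. The second is nonnegative since $(1+u)e^{-u}\le 1$ for $u\ge 0$, which is the inequality $e^{u}\ge 1+u$.

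Since $h$ is continuous on $(0,Z]$, its infimum is attained at $u=Z$, at an interior critical point, or approached as $u\to 0^+$. All three candidate values are nonnegative, so $h\ge 0$ on $(0,Z]$. Hence $D(Y)\ge D(Y^\ast)=h(u)\ge 0$ for every $Y\ge 0$, which proves \eqref{eq:aux-cmp}.
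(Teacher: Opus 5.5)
Your proof is correct, and its first half coincides with the paper's. The paper applies $e^{x}\ge 1+x$ at $x=Y-Z+u$ and multiplies by $e^{-u}$, which is exactly the tangent line of $Y\mapsto e^{Y-Z}$ at your minimizer $Y^\ast=Z-u$. So both arguments reduce the claim to the same one-variable inequality $h(u)\ge 0$ on $(0,Z]$.

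The routes diverge in how $h\ge 0$ is shown. The paper writes $h(u)=\int_u^Z\bigl[(1-u/t)-(e^{-u}-e^{-t})\bigr]\,\mathrm{d}t$. It then proves the integrand is pointwise nonnegative for $t\ge u$ via concavity of $x\mapsto 1-e^{-x}$, namely $1-e^{-u}\ge \tfrac{u}{t}(1-e^{-t})\ge \tfrac{u}{t}-e^{-t}$.

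You instead do a critical-point analysis. You use the endpoint values $h(Z)=0$ and $h(0^+)=1-e^{-Z}$. At any interior critical point, the relation $\ln(Z/u_c)=(Z-u_c)e^{-u_c}$ eliminates the logarithm and leaves $(e^{-u_c}-e^{-Z})+(Z-u_c)\bigl(1-(1+u_c)e^{-u_c}\bigr)\ge 0$. Your algebra for $h$, $h'$ and $h(u_c)$ checks out. The minimization step is rigorous once you note that $h$ extends continuously to $[0,Z]$, so a minimum exists and lies either at an endpoint or at a zero of $h'$.

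The paper's integral identity is a direct certificate. It needs no case analysis and no limit at $u\to 0^+$, and it sidesteps the non-monotonicity of $h$ that you correctly spotted. Your route is more mechanical and does not require guessing that identity, at the cost of a (valid) compactness step.
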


Apply Claim~\ref{clm:aux-ineq} with $Y=\alpha_f B/2$
and $Z=\alpha_f\Gamma$ to bound~\eqref{eq:lambda-norm-cmp}:
\[
    \alpha_f\Lambda
    \,\leq\,
    e^{-\alpha_f\Gamma}(e^{\alpha_f B/2}-1).
\]
Dividing by $\alpha_f/2$ and using the lower bound on $\Gamma$
(Lemma~\ref{lmm:upperboundGamma}):
\[
    2\Lambda
    \,\leq\,
    \frac{2}{\alpha_f}(e^{\alpha_f B/2}-1)e^{-\alpha_f\Gamma}
    \,\leq\,C_{\mathrm{FMZ}}. 
\]
\begin{proof}[Proof of \Cref{clm:aux-ineq}]
Applying the fundamental inequality $e^x \geq 1+x$ with $x = Y-Z+u$, we obtain
$e^{Y-Z+u} \geq 1+Y-Z+u$. Multiplying both sides by $e^{-u}$ yields:
\[
    e^{Y-Z} \,\geq\, e^{-u} + Ye^{-u} - (Z-u)e^{-u}.
\]
Subtracting $e^{-Z}$ provides a lower bound for the right-hand side of~\eqref{eq:aux-cmp}:
\[
    e^{-Z}(e^Y-1) \,\geq\, Ye^{-u} + e^{-u} - (Z-u)e^{-u} - e^{-Z}.
\]
To prove~\eqref{eq:aux-cmp}, it therefore suffices to show that
\[
    e^{-u} - (Z-u)e^{-u} - e^{-Z} \,\geq\, u \left(1-\ln\tfrac{u}{Z} \right) - Z,
\]
which reorganizes as
\[
    Z - u + u\ln(u/Z) \, \geq \, (Z-u)e^{-u} + e^{-Z} - e^{-u}.
\]
Both sides are integrals over $[u,Z]$:
\[
    \int_u^Z  \left(1-\tfrac{u}{t} \right)\mathrm{d}t
    \,\geq\,
    \int_u^Z (e^{-u}-e^{-t})\,\mathrm{d}t,
\]
so it suffices to show $1-u/t \geq e^{-u}-e^{-t}$ for all $t \geq u > 0$, or equivalently $1-e^{-u} \geq u/t - e^{-t}$. 
To that end, consider the strictly concave function $f(x)=1-e^{-x}$ (since $f''(x)=-e^{-x}<0$). Because $f(0)=0$, concavity implies $f(\theta t) \geq \theta f(t)$ for any $\theta\in[0,1]$. Choosing $\theta=u/t \leq 1$ yields $f(u) \geq \frac{u}{t}f(t)$, meaning:
\[
    1-e^{-u} \,\geq\, \frac{u}{t}(1-e^{-t}) \,=\, \frac{u}{t} - \frac{u}{t}e^{-t}.
\]
Since $u/t \leq 1$ and $e^{-t} > 0$, we have $-\frac{u}{t}e^{-t} \geq -e^{-t}$, which simplifies the bound to $1-e^{-u} \geq u/t - e^{-t}$, completing the proof.
\end{proof}

\end{document}